\crefname{theorem}{Theorem}{Theorems}
\crefname{lemma}{Lemma}{Lemmas}
\crefname{corollary}{Corollary}{Corollaries}
\crefname{proposition}{Proposition}{Propositions}
\crefname{conjecture}{Conjecture}{Conjectures}
\crefname{question}{Question}{Questions}
\crefname{definition}{Definition}{Definitions}
\crefname{example}{Example}{Examples}
\crefname{remark}{Remark}{Remarks}
\crefname{question}{Question}{Questions}
\crefname{enumi}{}{}
\crefname{equation}{}{}
\newtheorem{theorem}{Theorem}[section]
\newtheorem{lemma}[theorem]{Lemma}
\newtheorem{proposition}[theorem]{Proposition}
\theoremstyle{definition}
\theoremstyle{remark}
\newtheorem{remark}[theorem]{Remark}
\newcommand{\vf}{\varphi}
\newcommand{\vfd}{\dot{\varphi}}
\global\long\def\oneb{\bar{1}}
\newcommand{\beq}{\begin{equation}}
\newcommand{\eeq}{\end{equation}}
\newcommand{\norm}{|\!|}
\newcommand{\bnorm}{\big|\!\big|}
\DeclareMathOperator{\im}{Im}
\def\sideremark#1{\ifvmode\leavevmode\fi\vadjust{\vbox to0pt{\vss%    the remark
 \hbox to 0pt{\hskip\hsize\hskip1em%                                  will appear only
 \vbox{\hsize2.7cm\tiny\raggedright\pretolerance10000%                  on the side
  \noindent #1\hfill}\hss}\vbox to8pt{\vfil}\vss}}}%                  in 2.7cm
\begin{document}
\title[]{Obstruction Flat Rigidity of the CR $3$-Sphere}%Nonexistence of
\author{Sean N. Curry}
\address{Department of Mathematics, Oklahoma State University, Stillwater, OK 74078-5061}
\email{sean.curry@okstate.edu} 
\author{Peter Ebenfelt}
\address{Department of Mathematics, University of California at San Diego, La Jolla, CA 92093-0112}
\email{pebenfel@math.ucsd.edu}

\date{\today}
\thanks{The second author was supported in part by the NSF grant DMS-1900955.}

%Fefferman's equation, obstruction, obstruction flat corresponds to boundary reguularity ... 
%Fefferman metric, obstruction corresponds to Bach, Obstruction flat corresponds to the Bach flat equation of general relativity (studied in the case where the conformal Fefferman space has special unitary (conformal) holonomy - i.e. the holonomy of the Cartan connection...)....
%ambient metric, variation of total Q'-curvature
%3D CR --> 4D Fefferman space (Obstruction <--> Bach): Obstructs Poincare-Einstein extension problem (Fefferman-Graham, Graham)
% Deformations of S^3 can be seen as deformations of Einstein universe, Minkowski space, dS or AdS 
%3D CR, embedded case, obstruction is second Chern-Moser invariant
%Embedded case: Bergman and Szego kernels...

\begin{abstract} 
%Mention equation of conformal gravity - ...
On a bounded strictly pseudoconvex domain in $\mathbb{C}^n$, $n >1$, the smoothness of the Cheng-Yau solution to Fefferman’s complex Monge-Amp\`ere equation up to the boundary is obstructed by a local curvature invariant of the boundary, the CR obstruction density $\mathcal{O}$. While local examples of obstruction flat CR manifolds are plentiful, the only known compact examples are the spherical CR manifolds. We consider the obstruction flatness problem for small deformations of the standard CR $3$-sphere. That rigidity holds for the CR sphere was previously known (in all dimensions) for the case of embeddable CR structures, where it also holds at the infinitesimal level. In the $3$-dimensional case, however, a CR structure need not be embeddable. While in the nonembeddable case we may no longer interpret the obstruction density $\mathcal{O}$ in terms of the boundary regularity of Fefferman's equation (or the logarithmic singularity of the Bergman kernel) the equation $\mathcal{O}\equiv 0$ is still of great interest, e.g., since it corresponds to the Bach flat equation of conformal gravity for the Fefferman space of the CR structure (a conformal Lorentzian $4$-manifold).
 Unlike in the embeddable case, it turns out that in the nonembeddable case there is an infinite dimensional space of solutions to the linearized obstruction flatness equation on the standard CR $3$-sphere and this space defines a natural complement to the tangent space of the embeddable deformations. In spite of this, we show that the CR $3$-sphere does not admit nontrivial obstruction flat deformations, embeddable or nonembeddable. 
%On the other hand, we show that the infinitesimally obstruction flat deformations do integrate to a family of nonembeddable ``partially obstruction flat'' deformations of the standard CR $3$-sphere.

\end{abstract}

%\thanks{2000 {\em Mathematics Subject Classification}. 32H02, 32V30}
%\thanks{2000 {\em Mathematics Subject Classification}. 32V05, 30F45}
%\thanks{2000 {\em Mathematics Subject Classification}. 32V05, 30F45}

\maketitle

\section{Introduction and Main Results}

In this paper we continue our study of compact obstruction flat CR $3$-manifolds begun in \cite{CE2018-obstruction-flatI,CE2019-obstruction-flatII} by considering the problem via deformation theory of the standard CR $3$-sphere in the space of abstract CR structures. A major tool is the modified version of the Cheng-Lee slice theorem from the authors' recent paper \cite{CE2020-deformations-and-embeddings}. We recall that in \cite{Fefferman1976, Fefferman1979} Fefferman proposed the study of the asymptotic expansion of the Bergman kernel of a strictly pseudoconvex domain in terms of CR invariants of the boundary. In this context obstruction flatness is the same as the vanishing of the boundary trace of the coefficient of the log term in the Fefferman expansion of the Bergman kernel along the diagonal.
 %slice theorem - embeddable/nonembeddable deformations - mention later
%In \cite{CE2018-obstruction-flatI,CE2019-obstruction-flatII} the authors have shown that the CR $3$-sphere

%Lempert, Burns-Epstein, Bland-Duchamp, Bland
%Deformation theory via spherical harmonics (using... which builds upon ...)

To be more precise and explain the terminology, let $\Omega\subset \mathbb{C}^n$, $n>1$, be a bounded strictly pseudoconvex domain with smooth boundary $\partial \Omega$. There are several interrelated approaches to studying the CR geometry of $\partial \Omega$, or, equivalently, the biholomorphic geometry of $\Omega$. In \cite{Fefferman1976, Fefferman1979} Fefferman suggested as an approach to this the study of the boundary asymptotic expansion for the solution of the Dirichlet problem
\begin{equation} \label{eqn:FeffermanComplexMongeAmpere}
\left\{ \begin{array}{l}
\mathcal{J}(u):= (-1)^n \,\mathrm{det} \left( \begin{array}{ c c} u & u_{z^{\bar{k}}}\\ u_{z^{j}} & u_{z^{j}z^{\bar{k}}}\end{array} \right) = 1\;\,\mathrm{in}\;\, \Omega,\\
u=0 \;\,\mathrm{on}\;\, \partial \Omega
\end{array}\right.
\end{equation}
with $u>0$, and the CR boundary invariants which thereby arise; this turned out to be a very fruitful approach, see, e.g., \cite{BaileyEastwoodGraham1994,Hirachi2000,Hirachi2006}. The equation \cref{eqn:FeffermanComplexMongeAmpere} governs the existence of a unique complete K\"ahler-Einstein metric on $\Omega$ with K\"ahler potential $v=-\log(u)$. Fefferman \cite{Fefferman1976} showed that there is always a smooth approximate solution $\rho$ satisfying $\mathcal{J}(\rho)=1+O(\rho^{n+1})$, and that $\rho$ is unique mod $O(\rho^{n+2})$; we call such a $\rho$ a \emph{Fefferman defining function} for $\Omega$. Subsequently, Cheng and Yau \cite{ChengYau1980} proved the existence of a unique solution $u$ to Fefferman's equation \cref{eqn:FeffermanComplexMongeAmpere} which is $C^{\infty}$ in $\Omega$ and $C^{n+1}$ (but not in general $C^{\infty}$) up to the boundary. The precise boundary behavior of $u$ was uncovered by Lee and Melrose \cite{LeeMelrose1982} who showed that $u$ has an asymptotic expansion of the form
\begin{equation}\label{eqn:Lee-Melrose-asymptotics}
u \sim \rho \left(\eta_0 +  \eta_1 \rho^{n+1}\log \rho + \eta_2 (\rho^{n+1}\log \rho)^2 + \cdots), \quad \eta_k \in C^{\infty}(\overline{\Omega}\right)
\end{equation}
where $\rho$ is a Fefferman defining function (so that $\eta_0=1 \mod O(\rho^{n+1})$). The presence of log terms in the expansion explains the failure of smoothness of $u$ up to the boundary. While the solution $u$ is only uniquely determined globally, Graham \cite{Graham1987a, Graham1987b} showed that the coefficients $\eta_k$ mod $O(\rho^{n+1})$ are locally uniquely determined by $\partial \Omega$ (and independent of the choice of Fefferman defining function $\rho$). Moreover, he showed that if the coefficient $\eta_1$ of the first log term vanishes on $\partial \Omega$ then $\eta_k$ vanishes to infinite order at the boundary for all $k \geq 1$. Thus $\eta_1|_{\partial\Omega}$ is precisely the obstruction to $C^{\infty}$ boundary regularity of the Cheng-Yau solution to Fefferman's equation. The local invariant 
\begin{equation}\mathcal{O}:=\eta_1|_{\partial\Omega}
\end{equation}
 of the boundary $\partial \Omega$ is called the \emph{obstruction function} or \emph{obstruction density} (since it actually transforms as a density), and being a CR invariant it can be defined on an abstract strictly pseudoconvex CR manifold of any dimension; for the remainder of this paper will use the term CR manifold to mean strictly pseudoconvex CR manifold. If $M$ is a CR manifold for which the obstruction density vanishes, we say that $M$ is \emph{obstruction flat}.

In the abstract setting the obstruction density still arises naturally as an obstruction to smooth boundary regularity as follows. In \cite{Fefferman1976} Fefferman showed that the chains (distinguished curves in CR geometries introduced by Chern-Moser and E.\ Cartan) can be obtained by projecting the null geodesics from a circle bundle over the CR manifold which carries a natural conformal Lorentzian metric; this circle bundle is known as the \emph{Fefferman space} of the CR structure. The Fefferman space is a CR invariant in the sense that any CR diffeomorphism lifts to a conformal diffeomorphism of the corresponding Fefferman spaces. The Fefferman space construction generalizes in a natural way to abstract CR manifolds \cite{BurnsDiederichShnider1977,Lee1986,Farris1986,NurowskiSparling2003,CapGover2008}. On any conformal manifold (of any signature) there is a local conformal invariant, closely related to the CR obstruction function, known as the Fefferman-Graham obstruction tensor, which obstructs the formal solvability (and hence smooth boundary regularity) for the Poincar\'e-Einstein extension problem and the conformal ambient metric construction of \cite{FeffermanGraham1984,FeffermanGraham2012}; roughly speaking these problems are real analogs of \cref{eqn:FeffermanComplexMongeAmpere}. The notion of ambient metric for a conformal structure generalizes the notion of ambient metric for a CR manifold introduced by Fefferman (in the embedded case) in \cite{Fefferman1976,Fefferman1979}; in particular, the Fefferman-Graham obstruction tensor of a Fefferman space has only one nonvanishing component (in a natural frame), which is precisely the pullback of the CR obstruction density of the underlying CR manifold (see, e.g., \cite{GrahamHirachi2008,NurowskiSparling2003}). 

In $3$ dimensions, the CR obstruction function of a CR manifold therefore corresponds to the Bach tensor of its Fefferman space. In particular, the obstruction flat equation for a CR $3$-manifold corresponds to the Bach flat equation of conformal gravity (see, e.g., \cite{Maldacena2011}) for the corresponding $4$-dimensional Lorentzian Fefferman space (the Bach flat equation is a 4th order conformally invariant equation whose solutions include Einstein metrics). For the standard CR structure on $S^3$, the Fefferman space is $S^3 \times S^1$ with the conformal class of the standard product Lorentzian metric (whose universal cover is the conformal static Einstein universe $S^3\times \mathbb{R}$). CR deformations of the standard $S^3$ therefore correspond to periodic deformations of the Lorentzian conformal structure on $S^3\times \mathbb{R}$ that are also universal covers of Fefferman spaces (equivalently, have special unitary conformal holonomy \cite{CapGover2008}). Since the Anti-de Sitter, de Sitter, and Minkowski spaces all embed conformally in $S^3\times \mathbb{R}$, by considering obstruction flat deformations of the standard $S^3$ we are also considering Bach flat deformations of these spaces subject to the ansatz that our deformation is locally a Fefferman space. That is, our problem can be considered as a dimensional reduction of the Bach flat equation for $4$-dimensional Lorentzian metrics to $3$ dimensions.

If a CR manifold $M$ is locally CR equivalent to the unit sphere, then we say that $M$ is \emph{(locally) spherical}. 
For $\Omega$ the unit ball in $\mathbb{C}^n$ the solution to Fefferman's equation is $u=1-\norm z\norm^2$, which is smooth up to the boundary, hence the obstruction function vanishes for the unit sphere $\mathbb{S}^{2n-1}\subset \mathbb{C}^n$, $n>1$. It follows that any spherical CR structure is obstruction flat. On the other hand, by \cite[Proposition 4.14]{Graham1987a} there are also \emph{local} non-spherical (real analytic) obstruction flat strictly pseudoconvex hypersurfaces in $\mathbb{C}^n$, for any $n>1$ (cf.\ also \cite{NurowskiPlabanski2001} for an explicit example of a non-spherical noncompact Bach flat Fefferman space in $4$ dimensions). It is natural to ask if there are non-spherical solutions of the global obstruction flatness problem for compact CR manifolds. In the case of embeddable structures, there are no such non-spherical solutions in a large $C^1$ open neighborhood of the standard CR $3$-sphere, see \cite{CE2019-obstruction-flatII}. In this case, rigidity holds even at the infinitesimal level \cite{CE2018-obstruction-flatI}. On the other hand, in the nonembeddable case there are nontrivial solutions to the linearized obstruction flatness equation on the standard CR sphere:

\begin{theorem}\label{thm:infinitesimal-solutions}
There is an infinite dimensional space $H^1_{\mathcal{O}}$ of nontrivial infinitesimal deformations solving the linearized obstruction flatness equation on the standard CR $3$-sphere. Moreover, this space is a complement to the tangent space of the embeddable deformations in the space of all infinitesimal deformations.
\end{theorem}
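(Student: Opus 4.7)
The plan is to analyze the linearized obstruction operator $\mathcal{O}'$ acting on a convenient model for the space of infinitesimal deformations of the standard CR $3$-sphere, exploiting the slice theorem from \cite{CE2020-deformations-and-embeddings} and the harmonic decomposition under the natural $SU(2)$-action.

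First I would invoke the modified Cheng-Lee slice theorem of \cite{CE2020-deformations-and-embeddings} to parametrize the space of infinitesimal CR deformations of $S^3$ transverse to the action of the diffeomorphism group. This represents each infinitesimal deformation by a section $\phi$ of a natural CR-weighted bundle over $S^3$, and the $SU(2)$-action decomposes the space of such sections into finite-dimensional irreducible components indexed by spherical harmonic bidegree $(p,q)$. The embeddable infinitesimal deformations correspond to the components in one half of this bigrading (the ``positive'' part in the sense of Bland--Duchamp), while the nonembeddable complement corresponds to the other half; both pieces are infinite-dimensional, and together they realize the desired algebraic complement structure at the level of the full deformation space.

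Next I would derive an explicit formula for the linearization $\mathcal{O}'[\phi]$ at the standard structure from the known pseudohermitian formula for $\mathcal{O}$ on a general CR $3$-manifold in terms of the Webster scalar curvature and pseudohermitian torsion, combined with the standard formulas expressing the linearized torsion and Webster curvature in terms of $\phi$. Because the background standard pseudohermitian structure has vanishing torsion and constant Webster scalar curvature, only the purely linear contributions survive, and the resulting operator is a CR-invariant high-order differential operator on $\phi$. By $SU(2)$-equivariance it acts diagonally on each spherical harmonic component, reducing the problem to eigenvalue computations on finite-dimensional $SU(2)$-representations.

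Then I would evaluate $\mathcal{O}'$ on each $SU(2)$-isotypic component of the slice and show that the eigenvalue vanishes identically on every spherical harmonic in the nonembeddable part. Combined with the infinitesimal rigidity result of \cite{CE2018-obstruction-flatI}, which yields that $\mathcal{O}'$ has trivial kernel on the embeddable part of the slice (modulo trivial deformations), this identifies the kernel $H^1_{\mathcal{O}}$ precisely with the nonembeddable part, giving both infinite-dimensionality and the claimed complement property in one stroke.

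The principal obstacle is the final eigenvalue computation: one must obtain $\mathcal{O}'$ in a form sufficiently explicit to diagonalize in spherical-harmonic coordinates and then verify the vanishing on all nonembeddable modes. A conceptually cleaner route, which I would pursue in parallel, is to exploit the identification of $\mathcal{O}$ with (a component of) the Bach tensor of the Fefferman space $S^3\times S^1$: nonembeddable infinitesimal CR deformations lift to specific conformal perturbations with special unitary conformal holonomy, and one can hope to show directly from the structure of these lifts, together with the Einstein background, that the linearized Bach tensor vanishes automatically on this subspace.
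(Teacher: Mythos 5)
Your proposal is in the right framework and captures the essential strategy of the paper: pass to the modified Cheng--Lee slice, decompose in spherical harmonics, and identify the kernel of the linearized obstruction operator with the nonembeddable directions. However, there is one substantive imprecision. You claim that by $SU(2)$-equivariance the linearized obstruction operator acts diagonally on each spherical harmonic component and that one should show the eigenvalue is zero on the nonembeddable modes. In fact $D\mathcal{O}$ does \emph{not} preserve the $H_{p,q}$ spaces: it factors as $D\mathcal{O} = (Z_{\oneb})^2 D\mathcal{Q}$, and $(Z_{\oneb})^2$ maps $H_{p,q}$ to $H_{p+2,q-2}$, while the term $\frac{1}{6}\vfd_{\oneb}{}^1{}_{,}{}^{\oneb\oneb}{}_{11}$ in $D\mathcal{Q}$ involves the conjugate $\overline{\vfd}$ and sends $H_{p,q}$ into $H_{q,p}$. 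So ``eigenvalue'' is the wrong concept for the full operator. The mechanism that actually makes the argument work is the factorization together with two structural facts: (i) $\mathfrak{D}_0^{\perp}$ (the components with $q\leq 1$) is precisely $\ker(Z_{\oneb})^2$, and (ii) $D\mathcal{Q}$ \emph{preserves} $\mathfrak{D}_0^{\perp}$, because for $\vfd\in \mathfrak{D}_0^{\perp}$ the two bidegree-shifting terms $\vfd_1{}^{\oneb}{}_{,}{}^{11}{}_{11}$ and $\vfd_{\oneb}{}^1{}_{,}{}^{\oneb\oneb}{}_{11}$ both vanish and the remaining terms preserve $H_{p,q}$. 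Once (i) and (ii) are in hand, $D\mathcal{O}|_{\mathfrak{D}_0^{\perp}}=0$ is immediate, and conversely injectivity of $D\mathcal{Q}$ on the slice (which the paper verifies directly, though as you note it could also be extracted from the infinitesimal rigidity result of the earlier paper) shows the kernel is exactly $\mathfrak{D}_0^{\perp}$. Your Fefferman-space/Bach-tensor alternative is an interesting heuristic but is not carried out and is not what the paper does; I would not rely on it. The proposal is salvageable, but you need to replace the ``diagonal eigenvalue'' reasoning with the factorization argument above, otherwise the verification step as written would not go through.
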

In fact, if $\mathfrak{D} \cong C^{\infty}(S^3,\mathbb{C})$ denotes the space of all $C^{\infty}$ deformation tensors on the standard CR $3$-sphere, then $H^1_{\mathcal{O}} = \mathfrak{D}_0^{\perp}$, the $L^2$ orthogonal complement of the tangent space $\mathfrak{D}_0$ to the embeddable deformations; furthermore, in \cite{CE2020-deformations-and-embeddings} it is shown that there is a linear subspace of $\mathfrak{D}$ which gives a slice for the space of CR structures near the standard CR sphere and such that $H^1_{\mathcal{O}} = \mathfrak{D}_0^{\perp}$ is the $L^2$ orthogonal complement of the space of embeddable CR structures in the slice. Here we are fixing the underlying contact structure on the CR sphere when considering deformations, which is no loss of generality by Gray's classical theorem \cite{Gray1959}. By \cite{CE2020-deformations-and-embeddings} the infinitesimal deformations in $H^1_{\mathcal{O}}$, modulo the linearized action of the finite dimensional CR automorphism group of the standard CR $3$-sphere, are CR inequivalent.

Nevertheless, it turns out that none of these solutions to the linearized problem integrate to solutions of the nonlinear problem.

\begin{theorem} \label{thm:sphere-is-rigid}
The standard CR $3$-sphere is rigid as an obstruction flat CR $3$-manifold. 
\end{theorem}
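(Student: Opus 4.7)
The plan is to reduce the problem to a slice and then run an order-by-order analysis, showing that the infinite-dimensional family of nonembeddable infinitesimal solutions from Theorem \ref{thm:infinitesimal-solutions} is actually obstructed at second order.

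First, I would apply the modified Cheng-Lee slice theorem of \cite{CE2020-deformations-and-embeddings} to parametrize nearby CR structures on $S^3$ (with fixed contact distribution) by points $\phi$ of a slice $\mathcal{S}\subset\mathfrak{D}\cong C^\infty(S^3,\mathbb{C})$. The obstruction density descends to a smooth map $\mathcal{O}\colon\mathcal{S}\to C^\infty(S^3,\mathcal{E})$ with $\mathcal{O}(0)=0$, and its linearization $L:=D\mathcal{O}(0)$ has kernel $H^1_\mathcal{O}=\mathfrak{D}_0^\perp$ by Theorem \ref{thm:infinitesimal-solutions}. The theorem then reduces to showing that $\mathcal{O}(\phi)=0$ and $\phi$ sufficiently small in $\mathcal{S}$ imply $\phi=0$.

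Suppose $\phi(t)=t\phi_1+\tfrac{t^2}{2}\phi_2+\cdots$ is a smooth one-parameter family in $\mathcal{S}$ with $\mathcal{O}(\phi(t))\equiv 0$. The $t$-coefficient forces $L\phi_1=0$, hence $\phi_1\in H^1_\mathcal{O}$. At order $t^2$ one obtains an equation of the form
\begin{equation*}
L\phi_2 + Q(\phi_1,\phi_1) = 0,
\end{equation*}
where $Q$ is (a multiple of) the second derivative of $\mathcal{O}$ at $0$; solvability for $\phi_2$ forces $Q(\phi_1,\phi_1)\in\mathrm{Im}(L)$, equivalently $Q(\phi_1,\phi_1)$ is $L^2$-orthogonal to the cokernel $\ker L^*$.

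The main obstacle is to verify that this second-order integrability condition fails for every nonzero $\phi_1\in H^1_\mathcal{O}$. I would attack this through an explicit computation of $\mathcal{O}$ along curves $t\phi_1$ with $\phi_1\in H^1_\mathcal{O}$, using the description of the nonembeddable modes provided by Theorem \ref{thm:infinitesimal-solutions} together with the $SU(2,1)$-equivariance of $\mathcal{O}$: the equivariance decomposes both $H^1_\mathcal{O}$ and $\ker L^*$ into irreducible unitary components and reduces $Q$ modulo $\mathrm{Im}(L)$, on each component, to a small number of coupling coefficients that can be computed directly. Once this nondegeneracy is in hand, $\phi_1$ must vanish; iterating the same argument on the first nonzero Taylor coefficient (after reparametrizing $t$) forces $\phi(t)$ to vanish to infinite order, and either real-analyticity of $\mathcal{O}$ or a Lyapunov-Schmidt-type reduction built on the nondegenerate quadratic form $Q$ on $H^1_\mathcal{O}$ upgrades this to $\phi\equiv 0$. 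The embeddable case was already handled in \cite{CE2019-obstruction-flatII}; the new content here is precisely this nondegeneracy of the second variation on the infinite-dimensional nonembeddable direction $H^1_\mathcal{O}$.
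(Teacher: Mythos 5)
Your high-level skeleton (reduce via the slice theorem, show the second variation of $\mathcal{O}$ on the linearized kernel $H^1_{\mathcal{O}}=\mathfrak{D}_0^{\perp}$ is a genuine obstruction, and then upgrade) does match the structure of the paper, which proves formal non-solvability at second order (\cref{thm:formal-nonsolvability-inequality}) and then a separate quantitative rigidity statement (\cref{thm:rigidity}). However, two steps that you treat as routine are exactly where the paper has to do real work, and your proposed substitutes have gaps.

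First, the verification that $Q(\phi_1,\phi_1)\notin\mathrm{Im}(L)$ for all $\phi_1\in H^1_{\mathcal{O}}\setminus\{0\}$. You propose a representation-theoretic reduction using $SU(2,1)$-equivariance of $\mathcal{O}$, decomposing $H^1_{\mathcal{O}}$ and the cokernel into irreducibles and computing coupling coefficients. This would be a large computation and, more importantly, it would at best give pointwise nondegeneracy of the second variation, not the quantitative lower bound that the rest of the argument requires. The paper's key simplification is \cref{lem:obstruction-integral}: $\int_{S^3}\tilde{\mathcal{O}}\,\theta\wedge d\theta = -i\int_{S^3}\tilde{A}_{\oneb}{}^1\tilde{Q}_1{}^{\oneb}\,\theta\wedge d\theta$, so one only needs to pair the second variation against a single fixed element of $H^2_{\mathcal{O}}$ (the constants), and this pairing becomes the explicit positive-definite form of \cref{lem:3-norm}, $\int D\mathcal{Q}(\vfd)\cdot i\overline{\nabla_0\vfd}\,\theta\wedge d\theta \geq C\norm\vfd\norm_3^2$ on $\mathfrak{D}_0^{\perp}$. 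Your proposal does not supply a replacement for this coercivity estimate, and without it the argument cannot close.

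Second, and more seriously, the upgrade from formal curve rigidity to genuine rigidity. Showing $\phi(t)$ vanishes to infinite order does not give $\phi\equiv 0$: a nonzero obstruction flat $\vf$ near the origin need not lie on a curve of obstruction flat structures, so the Taylor-coefficient iteration says nothing about it. You offer ``real analyticity of $\mathcal{O}$'' or a ``Lyapunov-Schmidt reduction'' as a fix, but neither is developed. Real-analyticity of $\mathcal{O}$ as a map on a Fr\'echet space of CR structures is not a standard tool here, and classical Lyapunov--Schmidt does not apply because both the kernel $H^1_{\mathcal{O}}$ and cokernel $H^2_{\mathcal{O}}$ are infinite-dimensional. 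What the paper actually does is construct a Kuranishi map $\Psi:H^1_{\mathcal{O}}\to H^2_{\mathcal{O}}$ (\cref{thm:partial-solvability}, \cref{rem:Kuranishi}), and then prove $0$ is isolated in $\Psi^{-1}(0)$ by a contradiction/normalization argument: take a sequence $\vf^{(k)}=\epsilon_k\hat{\vf}^{(k)}\to 0$ in $H^3_{FS}$ with $\mathcal{O}(\vf^{(k)})=0$, show via Folland--Stein estimates that $\norm\mathcal{P}_1\hat{\vf}^{(k)}\norm_3 = O(\epsilon_k^{1/2})$ (so the sequence is asymptotically in $H^1_{\mathcal{O}}$), and then apply the integral identity plus the coercivity of \cref{lem:3-norm} to reach a contradiction. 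This quantitative coercivity on $H^1_{\mathcal{O}}$ is precisely what you would need for your Lyapunov--Schmidt idea to work, so both of your gaps ultimately reduce to the missing Lemmas~\ref{lem:obstruction-integral} and~\ref{lem:3-norm} and the accompanying nonlinear estimates.
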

In other words, there is an open neighborhood $U$ of the standard CR $3$-sphere in the space of CR structures on $S^3$ such that a CR structure in $U$ is obstruction flat if and only if it is CR equivalent to the standard CR $3$-sphere.
Here the topology is the $C^k$ topology for any sufficiently large $k$ (e.g., $k=10$ will do). \cref{thm:sphere-is-rigid} follows immediately from \cref{thm:slice-thm} (cf.\ \cite[Theorem 1.2]{CE2020-deformations-and-embeddings}) and \cref{thm:rigidity} below. This result substantially generalizes the rigidity result for obstruction flat embeddable deformations of the CR 3-sphere that follows from \cite[Corollary 1.4]{CE2019-obstruction-flatII}, since near the standard CR sphere the space of embeddable CR structures is a (Fr\'echet) submanifold with dense complement in the space of abstract CR structures \cite{CE2020-deformations-and-embeddings}. Rigidity in this more general situation is perhaps surprising in light of \cref{thm:infinitesimal-solutions}.  We note that the method of proof is completely different from \cite{CE2019-obstruction-flatII}, which relied on the existence of holomorphic vector fields in the ambient space.

Moreover, we have the following formal rigidity result.

\begin{theorem} \label{thm:formal-nonsolvability}
Let $\vfd \in H^1_{\mathcal{O}} \setminus \{ 0 \}$, a nontrivial solution of the linearized obstruction flatness equation on the standard CR sphere. Then there does not exist a deformation tensor $\ddot\vf$ such that the family $\vf(t) = \vfd \, t + \ddot\vf\, t^2/2$ satisfies the obstruction flatness equation to second order at $t=0$.
\end{theorem}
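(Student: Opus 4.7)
The approach I would take is a Kuranishi-type obstruction argument. Expanding the nonlinear obstruction density in powers of $t$ at the round structure gives
\[
\mathcal{O}(\vf(t)) \;=\; t\,\mathcal{L}\vfd \;+\; \tfrac{t^2}{2}\bigl(\mathcal{L}\ddot\vf + \mathcal{Q}(\vfd,\vfd)\bigr) \;+\; O(t^3),
\]
where $\mathcal{L}$ is the linearization of $\mathcal{O}$ at the standard CR sphere and $\mathcal{Q}$ is its second variation. The first-order term vanishes by the hypothesis $\vfd \in H^1_{\mathcal{O}}$, so solvability at second order reduces to the linear equation $\mathcal{L}\ddot\vf = -\mathcal{Q}(\vfd,\vfd)$. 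Because $\mathcal{O}$ is (up to the Fefferman lift) a Bach-type invariant and $\mathcal{L}$ is formally $L^2$-self-adjoint on the slice, its range is the $L^2$-orthogonal complement of $\ker\mathcal{L} = H^1_{\mathcal{O}}$. Consequently the theorem reduces to exhibiting some $\psi \in H^1_{\mathcal{O}}$ with
\[
\langle\, \mathcal{Q}(\vfd,\vfd),\,\psi\,\rangle_{L^2} \;\neq\; 0.
\]

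To carry this out concretely, I would work in the Cheng--Lee-type slice of \cite{CE2020-deformations-and-embeddings}, parametrize abstract CR structures near the round sphere by deformation tensors $\vf \in \mathfrak{D} \cong C^{\infty}(S^3,\mathbb{C})$, and expand the explicit formula for $\mathcal{O}(\vf)$ used in \cite{CE2018-obstruction-flatI,CE2019-obstruction-flatII} to quadratic order in $\vf$ to read off $\mathcal{L}$ and $\mathcal{Q}$. Since $\mathfrak{D}$ decomposes into isotypic components under the $SU(2)\times SU(2)$-action on $S^3$, and since by \cref{thm:infinitesimal-solutions} the space $H^1_{\mathcal{O}} = \mathfrak{D}_0^{\perp}$ is supported in an explicit list of nonembeddable Fourier modes, the equivariance of $\mathcal{Q}$ forces the pairing $\langle \mathcal{Q}(\vfd,\vfd),\psi\rangle$ to couple only a small number of representations, reducing the obstruction to a representation-theoretic computation on each block.

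The most natural test element is $\psi = \vfd$ itself (or an appropriate Hermitian analogue), so the crux of the argument is to compute the diagonal form $Q(\vfd) := \langle \mathcal{Q}(\vfd,\vfd),\vfd\rangle_{L^2}$ for $\vfd \in H^1_{\mathcal{O}}$ and to show $Q(\vfd) \neq 0$ whenever $\vfd \neq 0$. The aim is to rewrite $Q(\vfd)$, by CR-covariant integration by parts together with the commutation relations on $S^3$, as a manifestly sign-definite quadratic form in $\vfd$ on $H^1_{\mathcal{O}}$ --- ideally as a weighted $L^2$-norm of some first-order differential expression in $\vfd$ --- so that positivity (and hence nonvanishing) follows at once. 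Should full positivity prove elusive, it should still be possible to isolate a single surviving representation-theoretic term whose sign is forced by the nonembeddable character of $\vfd$ dictated by \cref{thm:infinitesimal-solutions}.

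The principal obstacle is this last step: taming the second variation of a sixth-order nonlinear CR invariant into a shape where the sign of its diagonal pairing against $H^1_{\mathcal{O}}$ is transparent. The computation is lengthy but highly constrained --- $SU(2)\times SU(2)$-equivariance together with the precise Fourier description of $\mathfrak{D}_0$ and $H^1_{\mathcal{O}}$ from \cite{CE2020-deformations-and-embeddings} severely restricts the terms that can survive the pairing, so the combinatorial complexity is essentially representation-theoretic rather than analytic. A subsidiary technical point is verifying the formal $L^2$-self-adjointness of $\mathcal{L}$ on the chosen slice (or, equivalently, directly identifying $H^1_{\mathcal{O}}$ with the cokernel of $\mathcal{L}$), which should follow from the Bach-type variational origin of $\mathcal{O}$ on the Fefferman space.
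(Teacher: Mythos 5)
Your high-level strategy is the right one: expand $\mathcal{O}(\vf(t))$ to second order, use that the $O(t)$ term vanishes for $\vfd\in H^1_{\mathcal{O}}$, and reduce the question to whether the second variation $\mathcal{Q}(\vfd,\vfd)$ lies in the image of the linearization. But two of the specific steps do not work as stated, and the paper's proof avoids both problems by a different and cleaner reduction.

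First, the self-adjointness argument is not applicable. The linearization $D\mathcal{O}$ maps $\mathfrak{D}\cong C^{\infty}(S^3,\mathbb{C})$ to $C^{\infty}(S^3,\mathbb{R})$, i.e.\ between \emph{different} spaces, so it cannot be formally $L^2$-self-adjoint and in particular one cannot identify its range with the orthogonal complement of its kernel. The kernel is $H^1_{\mathcal{O}}=\mathfrak{D}_0^{\perp}\subset\mathfrak{D}$ while the cokernel is $H^2_{\mathcal{O}}\subset C^{\infty}(S^3,\mathbb{R})$, and these are genuinely distinct spaces (see \cref{rem:H1-H2} and the deformation complex there). Detecting $\mathcal{Q}(\vfd,\vfd)\notin\operatorname{range}(D\mathcal{O})$ therefore requires pairing against elements of the cokernel $H^2_{\mathcal{O}}$, not against $\vfd$.

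Second, and more seriously, the proposed test $\psi=\vfd$ cannot yield a sign-definite obstruction. Since $\mathcal{Q}$ is quadratic, the form $\vfd\mapsto\langle\mathcal{Q}(\vfd,\vfd),\vfd\rangle_{L^2}$ is cubic and hence odd under $\vfd\mapsto-\vfd$; it must change sign and therefore vanish somewhere on $H^1_{\mathcal{O}}\setminus\{0\}$. So even if the representation-theoretic computation were carried out, this particular pairing could not be the engine of a universal nonvanishing statement. What the paper does instead is test against the simplest element of the cokernel, the constant function $1\in H^2_{\mathcal{O}}$, i.e.\ it integrates $\mathcal{O}(\vf(t))$ against $\theta\wedge d\theta$. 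This gives a \emph{quadratic} form in $\vfd$ which can be, and in fact is, positive definite. The crucial simplification making this tractable is \cref{lem:obstruction-integral}: by the divergence formula of \cite{Lee1988} the double-divergence part of $\mathcal{O}$ integrates to zero, leaving $\int_{S^3}\tilde{\mathcal{O}}\,\theta\wedge d\theta=-i\int_{S^3}\tilde{A}_{\oneb}{}^{1}\tilde{Q}_1{}^{\oneb}\,\theta\wedge d\theta$. At second order this becomes $\int_{S^3}D\mathcal{Q}(\vfd)\cdot i\overline{\nabla_0\vfd}\,\theta\wedge d\theta$, whose positivity and equivalence with $\norm\vfd\norm_3^2$ follows from the explicit spherical-harmonic spectral calculation in \cref{lem:DQ-on-D0perp} (this is \cref{lem:3-norm}). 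This entirely bypasses the $SU(2)\times SU(2)$ bookkeeping you anticipate and gives the quantitative inequality of \cref{thm:formal-nonsolvability-inequality}, from which \cref{thm:formal-nonsolvability} follows. In summary: your reduction to a pairing is correct in spirit, but you should pair against the constant $1\in H^2_{\mathcal{O}}$ rather than against $\vfd\in H^1_{\mathcal{O}}$, and the divergence identity is the key technical input that makes the resulting computation short.
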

\begin{remark}
If $\vf(t) = \vfd \, t + \ddot\vf\, t^2/2$ is as in \cref{thm:formal-nonsolvability} then the corresponding family of CR obstruction functions are of the form $\mathcal{O}(t)= \ddot{\mathcal{O}}t^2/2 + O(t^3)$ where $\ddot{\mathcal{O}}\neq 0$. The corresponding family of CR curvatures (Cartan umbilical tensors) $Q(t) = \dot{Q}t + O(t^2)$ has $\dot{Q}\neq 0$. This is in contrast to the case of embeddable deformations of the standard CR sphere, where the vanishing order of $Q(t)$ must match that of $\mathcal{O}(t)$ (see \cite[Theorem 1.2]{CE2018-obstruction-flatI}). Note that the $\vf(t)$ in \cref{thm:formal-nonsolvability} are nonembeddable (for small $t$) since $\vfd \in H^1_{\mathcal{O}}\setminus\{0\}$ and  $H^1_{\mathcal{O}}=\mathfrak{D}_0^{\perp}$.
\end{remark}
%Deformation complex and $H^2_{\mathcal{O}}$
%\beq
%0\longrightarrow C^{\infty}(S^3,\mathbb{R}) \longrightarrow \mathfrak{D} \overset{D\mathcal{O}\,\,}{\longrightarrow} C^{\infty}(S^3,\mathbb{R})\longrightarrow 0
%\eeq
We note that the linearized obstruction operator $\mathfrak{D}\to C^{\infty}(S^3,\mathbb{R})$ is not surjective. We denote its cokernel by $H^2_{\mathcal{O}}$; in terms of spherical harmonics (recalled in \cref{sec:spherical-harmonics} below) the space $H^2_{\mathcal{O}}$ may be identified with space of $f\in C^{\infty}(S^3,\mathbb{R})$ whose spherical harmonic expansion $f= \sum_{p,q} f_{p,q}$ satisfies $f_{p,q}=0$ for $p,q\geq 2$. We also denote by $\mathfrak{D}_{BE}$ the (Burns-Epstein) space of deformation tensors $\vf = \sum_{p,q} \vf_{p,q}$ such that $\vf_{p,q}=0$ unless $q\geq p+4$. Note that $H^1_{\mathcal{O}} = \mathfrak{D}_0^{\perp}$ is $L^2$ orthogonal to $\mathfrak{D}_{BE}$.
\begin{theorem}\label{thm:partial-solvability-intro}
There is a neighborhood $U$ of $0$ in $H^1_{\mathcal{O}}$ such that for any $\vf_0 \in U$ there is a $\psi\in \mathfrak{D}_{BE}$ such that the deformation tensor $\vf = \vf_0 + \psi$ satisfies $\mathcal{O}(\vf) \equiv 0 \mod H^2_{\mathcal{O}}$.
\end{theorem}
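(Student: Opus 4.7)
My plan is to establish this via the implicit function theorem, using the structure of the linearized obstruction operator $P: \mathfrak{D} \to C^\infty(S^3, \mathbb{R})$. Its kernel is $H^1_\mathcal{O}$, and since $H^2_\mathcal{O}$ is by definition the $L^2$ cokernel of $P$, the image of $P$ coincides with $(H^2_\mathcal{O})^\perp$. By the spherical-harmonic description of the Burns--Epstein tangent space to embeddable deformations, $\mathfrak{D} = H^1_\mathcal{O} \oplus \mathfrak{D}_{BE}$ is an $L^2$-orthogonal decomposition, so the restriction $P|_{\mathfrak{D}_{BE}} : \mathfrak{D}_{BE} \to (H^2_\mathcal{O})^\perp$ is already a linear bijection at the algebraic level.

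Next I would fix $s$ sufficiently large and define, on Sobolev completions,
\[
F: (H^1_{\mathcal{O}} \cap H^s) \times (\mathfrak{D}_{BE} \cap H^s) \longrightarrow (H^2_\mathcal{O})^\perp \cap H^{s-4}
\]
by $F(\vf_0, \psi) := \pi(\mathcal{O}(\vf_0 + \psi))$, where $\pi$ is the $L^2$-orthogonal projection onto $(H^2_\mathcal{O})^\perp$. Because the $(p,q)$-type decomposition on $S^3$ consists of common eigenspaces of the scalar Laplacian and the Reeb generator, $\pi$ commutes with these operators and so is bounded on every Sobolev space. The standard CR $3$-sphere being obstruction flat gives $F(0, 0) = 0$, and the partial derivative $D_\psi F(0, 0)$ is simply $P|_{\mathfrak{D}_{BE}}$ (the projection is redundant since $\operatorname{image}(P) \subset (H^2_\mathcal{O})^\perp$).

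The decisive step is to upgrade the algebraic isomorphism $P|_{\mathfrak{D}_{BE}}$ to a \emph{topological} isomorphism between these Sobolev spaces. For this one needs an a priori estimate $\|\psi\|_{H^s} \lesssim \|P\psi\|_{H^{s-4}}$ for $\psi \in \mathfrak{D}_{BE}$. Since $P$ acts diagonally on the spherical-harmonic decomposition, with eigenvalues that are nonzero precisely off of $H^1_\mathcal{O}$ and that grow polynomially in $(p,q)$ at the rate dictated by the fourth-order principal symbol of the obstruction operator, the required estimate follows from the explicit spectral description of $P$. With this in place, the implicit function theorem produces a smooth map $\vf_0 \mapsto \psi(\vf_0)$ on a neighborhood $U$ of $0$ in $H^1_{\mathcal{O}} \cap H^s$, with $\psi(0) = 0$, satisfying $\mathcal{O}(\vf_0 + \psi(\vf_0)) \equiv 0 \bmod H^2_{\mathcal{O}}$; a subelliptic regularity bootstrap then yields $\psi(\vf_0) \in C^\infty$ whenever $\vf_0 \in C^\infty$. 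The main obstacle is thus the precise diagonalization of $P$ on spherical harmonics (or equivalently the requisite subelliptic estimate); once that is secured, the rest is a routine perturbation argument.
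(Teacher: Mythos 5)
Your strategy---an implicit-function-theorem argument built on a spectral (spherical-harmonic) diagonalization of the linearized operator, followed by a regularity bootstrap---is in essence what the paper does (the paper runs a contraction-mapping argument in place of the IFT, but these are interchangeable here). However, there are several concrete gaps. First, the orthogonal decomposition $\mathfrak{D} = H^1_{\mathcal{O}} \oplus \mathfrak{D}_{BE}$ you invoke is false: a deformation $\vf$ with a nonzero $H_{p,q}$ component for $2 \le q < p+4$ lies in neither summand. The correct statement is that $\mathfrak{D}'_{BE} \oplus \mathfrak{D}_0^{\perp}$ is merely a \emph{slice} for the contact-diffeomorphism action on $(S^3,H)$ (the modified Cheng--Lee slice theorem, \cref{thm:slice-thm}); the complement consists of trivial deformations $i(Z_1)^2 C^{\infty}(S^3,\mathbb{R})$, on which $D\mathcal{O}$ also vanishes. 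Second, and related, $P|_{\mathfrak{D}_{BE}}$ is \emph{not} injective: on the critical diagonal $q=p+4$ the linearized Cartan-tensor operator $D\mathcal{Q}$ is only $\mathbb{R}$-linear (it involves both $\vf$ and $\bar\vf$, cf.\ the proof of \cref{lem:P1DQ-on-D'BE}) and has a kernel. The reality condition defining $\mathfrak{D}'_{BE}$ is precisely what removes this kernel and is indispensable to the invertibility of the linearization; your IFT argument needs to be set up on $\mathfrak{D}'_{BE}$, not $\mathfrak{D}_{BE}$.

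Beyond these algebraic issues, the analytic framework is off in two ways. The linearized obstruction operator is sixth order, not fourth: $\mathcal{O}$ is built from two additional covariant derivatives of the fourth-order Cartan umbilical tensor, and correspondingly $D\mathcal{O}=(Z_{\oneb})^2 D\mathcal{Q}$. So the target space of your map $F$ should lose six, not four, derivatives. More importantly, $D\mathcal{O}$ involves only the horizontal derivatives $Z_1, Z_{\oneb}$ and is subelliptic but not elliptic; the correct completions are the anisotropic Folland--Stein spaces $H^s_{FS}$, where the spectral estimates (\cref{lem:P1DQ-vs-sublaplacian-squared,lem:Z1-squared-Zoneb-squared-in-spherical-harmonics}) give a genuine isomorphism $H^6_{FS}\to L^2$. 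In ordinary Sobolev spaces $H^s$ one would not get the topological isomorphism you assert without additional work, since the inverse gains nothing in the Reeb direction. Finally, the ``precise diagonalization'' you defer to the end is really the entire content of the lemma; it is not a routine add-on but the key step, and it is exactly where the critical-diagonal subtleties and the growth rates in $(p,q)$ have to be established.
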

A more precise version of \cref{thm:partial-solvability-intro} (in which $\psi$ is uniquely determined) is given in \cref{thm:partial-solvability}.
%Refer to ..remark for explanation of notation $H^1_{\mathcal{O}}$ and $H^2_{\mathcal{O}}$
\begin{remark}
See \cref{rem:H1-H2} for an explanation of the notation $H^1_{\mathcal{O}}$ and $H^2_{\mathcal{O}}$. By the argument leading to \cref{thm:partial-solvability} we get the existence of a Kuranishi map $\Psi: H^1_{\mathcal{O}} \to H^2_{\mathcal{O}}$ defined near $0$ whose zero set is locally isomorphic with the set of solutions of $\mathcal{O}\equiv 0$; but \cref{thm:rigidity} (\cref{thm:sphere-is-rigid}) shows that $0$ is an isolated point of $\Psi^{-1}(0)$, see \cref{rem:Kuranishi}. 
\end{remark}

The main tools used in this paper are the author's modified version of the Cheng-Lee slice theorem \cite{CE2020-deformations-and-embeddings} and a careful analysis of the relevant linearized operators on the standard CR $3$-sphere as well estimates for the nonlinear part of the obstruction flatness equation near the CR sphere in terms of the spherical harmonic decomposition of a CR deformation tensor in the slice.

The paper is organized as follows. In \cref{sec:pseudohermitian,sec:CR-sphere} we recall some basic material on pseudohermitian structures and CR invariants, and the standard pseudohermitian structure on the CR $3$-sphere. In \cref{sec:deformations} we consider deformations of the standard CR $3$-sphere with its standard contact form, and derive certain key formulae for the pseudohermitian connection and curvature of the deformed CR structure. \cref{sec:spherical-harmonics} recalls the spherical harmonics and presents the version of the modified slice theorem that we will be using. In \cref{sec:linear-theory} we present the linear theory for the CR curvature and obstruction function operators. We determine the kernel and cokernel of the linearized obstruction operator on the standard CR $3$-sphere; in particular, we prove \cref{prop:kernel-of-DO} which together with the slice theorem implies \cref{thm:infinitesimal-solutions}. In \cref{sec:partial-solvability} we establish the partial solvability of the obstruction flatness equation (i.e.\ solvability modulo $H^2_{\mathcal{O}}$) including \cref{thm:partial-solvability} and its proof. In \cref{sec:formal-rigidity} we discuss formal rigidity, stating and proving \cref{thm:formal-nonsolvability-inequality} which implies \cref{thm:formal-nonsolvability}. We conclude in \cref{sec:general-rigidity-result} with a proof of \cref{thm:rigidity}, which together with the slice theorem proves our main result, \cref{thm:sphere-is-rigid}.

%Approach - slice theorem - spherical harmonics and Z_1 Z_\oneb
%Outline/organization

\section{Psedohermitian Structures and the Tanaka-Webster Connection} \label{sec:pseudohermitian}
In this section we recall some basic material on pseudohermitian structures on a CR $3$-manifold; for a more detailed exposition see, e.g., \cite{Webster1978,Lee1988,CE2018-obstruction-flatI}. Let $(\theta, \theta^1, \theta^{\bar 1})$ be an admissible coframe for $(M,H,J)$. Then $h_{1\oneb}$ is defined by 
\beq\label{eqn:levi-h-defn}
d\theta = i h_{1\oneb}\theta^1\wedge\theta^{\oneb}.
\eeq
The Tanaka-Webster connection form $\omega_1{}^1$ and the pseudohermitian torsion $A_{\oneb}{}^1$ are defined by
\beq\label{eqn:d-theta1}
d\theta^1 = \theta^1\wedge \omega_1{}^1 + A_{\oneb}{}^1\,\theta\wedge\theta^{\oneb}
\eeq
and 
\beq\label{eqn:omega11-plus-conjugate}
\omega_{1}{}^1 + \omega_{\oneb}{}^{\oneb} = h^{1\oneb}dh_{1\oneb}
\eeq
where $\omega_{\oneb}{}^{\oneb} = \overline{\omega_{1}{}^{1}}$ and $h^{1\oneb} = (h_{1\oneb})^{-1}$. We will sometimes refer to the individial components of $\omega_1{}^1$, defined by writing
\beq\label{eqn:omega11-components}
\omega_1{}^1 = \omega_1{}^1{}_0\theta + \omega_1{}^1{}_1\theta^1 + \omega_1{}^1{}_{\oneb}\theta^{\oneb}.
\eeq
The Tanaka-Webster scalar curvature $R$ of $(M,H,J)$ is defined by
\beq\label{eqn:d-omega11}
d\omega_1{}^1 = Rh_{1\oneb}\theta^1\wedge\theta^{\oneb} + (\nabla^1A_{11})\theta^1\wedge \theta - (\nabla^{\oneb}A_{\oneb\oneb})\theta^{\oneb}\wedge\theta
\eeq 
where $\nabla$ denotes the Tanaka-Webster covariant derivative and indices have been raised and lowered using $h_{1\oneb}$. The \emph{Cartan umbilical tensor} is then given by 
\beq\label{eqn:Cartan-Q11}
Q_{11} = -\frac{1}{6}\nabla_1\nabla_1 R - \frac{i}{2}RA_{11} + \nabla_0 A_{11} + \frac{2i}{3}\nabla_1\nabla^1 A_{11}
\eeq
The \emph{obstruction function} or \emph{obstruction density} of $(M,H,J)$ is then given by
\beq\label{eqn:obstruction}
\mathcal{O} = \frac{1}{3}(\nabla^1\nabla^1Q_{11} -iA^{11}Q_{11}).
\eeq
Note that $\mathcal{O}$ is really a weighted CR invariant (a section of a CR density line bundle), see, e.g., \cite{CE2018-obstruction-flatI}; throughout this paper we will be working with a fixed contact form $\theta$ and thus may think of $\mathcal{O}$ as a function, as in \cref{eqn:obstruction}.

\section{The Standard Pseudohermitian Structure on the Sphere} \label{sec:CR-sphere}
Let $S^3$ denote the unit sphere in $\mathbb{C}^2$ and $(S^3, H, J)$ the corresponding (standard) CR structure on the $S^3$. Let $(z,w)$ be the standard coordinates on $\mathbb{C}^2$ and endow $S^3$. Define the $(1,0)$-vector field $Z_1$ by
\beq\label{eqn:Z1-on-S3}
Z_1 = \bar w \frac{\partial}{\partial z} - \bar z \frac{\partial}{\partial w}
\eeq 
and let $Z_{\bar 1} = \overline{Z_1}$. Note that $Z_1$ and $Z_{\bar 1}$ are tangent to $S^3$ and therefore span the holomorphic and antiholomorphic tangent spaces of $S^3$ respectively at each point. We also endow $S^3$ with its standard contact form $\theta$, given by $\theta = i\partial \rho|_{TS^3}$ where $\rho = 1-|z|^2-|w|^2$. Then $\theta = -i(\bar z dz +\bar w dw)$ and $d\theta = i(dz\wedge d\bar z + dw \wedge d\bar w)$, where in both formulae the restriction to $TS^3$ is left implicit. Let $T$ denote the Reeb vector field of $\theta$, i.e. the unique vector field on $S^3$ satisfying $\theta(T)=1$ and $d\theta(T, \,\cdot\,)=0$. It is easy to see that 
\beq\label{eqn:T-on-S3}
T = i\left(z\frac{\partial}{\partial z} + w \frac{\partial}{\partial w}\right) - i\left( \bar z \frac{\partial}{\partial \bar z} + \bar w \frac{\partial}{\partial \bar w}\right).
\eeq
The vector fields $(T, Z_1, Z_{\bar 1})$ define a frame for the complexified tangent space of $S^3$, with coframe $(\theta, \theta^1, \theta^{\bar 1})$. By evaluating $-id\theta = dz\wedge d\bar z + dw \wedge d\bar w$ on $(Z_1,Z_{\oneb})$ we see that $h_{1\oneb} = 1$, i.e.
\beq\label{eqn:dtheta-on-S3}
d\theta = i\theta^1\wedge \theta^{\oneb}. 
\eeq
From the structure equation \cref{eqn:d-theta1} we also obtain that
\beq\label{eqn:sphere-omega-and-A}
\omega_1{}^1 = -2i\theta \quad \text{and} \quad A_{11}=0. 
\eeq
From \cref{eqn:d-omega11} we then have
\beq 
R=2.
\eeq
It follows that $Q_{11} = 0$ and $\mathcal{O}=0$ for the standard CR sphere.

\section{Deformations of the Standard CR Sphere}\label{sec:deformations}

Let $(S^3, H, J)$ denote the standard CR sphere, and $\theta$ and $Z_1$ be as above. We now consider the deformed CR structure on $S^3$ whose underlying holomorphic tangent space is spanned by 
\beq \label{eqn:tilde-Z1}
\tilde{Z}_1 = Z_1+ \vf_1{}^{\oneb} Z_{\oneb},
\eeq
where the deformation tensor $\vf = \vf_1{}^{\oneb}$ is a smooth complex function on $S^3$; note that here we are keeping the underlying contact distribution $H$ fixed, which is no loss of generality for deformed structures homotopic to the standard one by Gray's classical theorem \cite{Gray1959}. We keep the contact form $\theta$ fixed, and thus obtain a coframe $(\theta, \tilde{\theta}^1, \tilde{\theta}^{\oneb})$ dual to $(T, \tilde{Z}_1, \tilde{Z}_{\oneb})$ with
\beq \label{eqn:theta1-tilde}
\tilde{\theta}^1 = \frac{1}{1-|\vf|^2}\left( \theta^1- \vf_{\oneb}{}^{1} \theta^{\oneb} \right),
\eeq
where $\vf_{\oneb}{}^1 = \overline{\vf_1{}^{\oneb}}$ and $|\vf|^2 = \vf_1{}^{\oneb} \vf_{\oneb}{}^1$. We then have $d\theta = i\tilde{h}_{1\oneb}\tilde\theta^1\wedge\tilde\theta^{\oneb}$ where
\beq 
\tilde{h}_{1\oneb}=1-|\vf|^2.
\eeq
Note that we may recover $Z_1$ and $\theta^1$ from $\tilde{Z}_1$ and $\tilde{\theta}^1$ by
\beq
Z_1 =   \frac{1}{1-|\vf|^2} \left(
\tilde{Z}_1 - \vf_1{}^{\oneb} \tilde{Z}_{\oneb} \right)
\eeq
and
\beq
\theta^1 = \tilde\theta^1+ \vf_{\oneb}{}^{1} \tilde\theta^{\oneb}.
\eeq
Let $\tilde{\omega}_1{}^1$ denote the Tanaka-Webster connection form for the modified CR structure in the admissible coframe $(\theta, \tilde{\theta}^1, \tilde{\theta}^{\oneb})$, and let $\tilde{A}_{11}$ denote the corresponding pseudohermitian torsion. We decompose $\tilde{\omega}_1{}^1$ in the frame $(\theta, \tilde{\theta}^1, \tilde{\theta}^{\oneb})$ as 
\beq
\tilde\omega_1{}^1 = \tilde\omega_1{}^1{}_0\tilde\theta + \tilde\omega_1{}^1{}_1\tilde\theta^1 + \tilde\omega_1{}^1{}_{\oneb}\tilde\theta^{\oneb}.
\eeq
In order to solve for these components we compute $d\tilde\theta^1$ using that $d\theta^1 = -iR \theta^1 \wedge \theta$ (where $R=2$),
\begin{align*}
d\tilde\theta^1 & = \frac{d|\vf|^2}{(1-|\vf|^2)^2} \wedge (\theta^1 - \vf_{\oneb}{}^1\theta^{\oneb}) + \frac{1}{1-|\vf|^2} \left( d\theta^1 - d\vf_{\oneb}{}^1 \wedge \theta^{\oneb} - \vf_{\oneb}{}^1 d\theta^{\oneb} \right)\\
& = \frac{d|\vf|^2}{1-|\vf|^2} \wedge \tilde\theta^1  + \frac{1}{1-|\vf|^2} \left( - iR \theta^1\wedge \theta - \vf_{\oneb}{}^1{}_{,1}\theta^1\wedge\theta^{\oneb} - (T\vf_{\oneb}{}^1)\theta\wedge \theta^{\oneb} - iR\vf_{\oneb}{}^1\theta^{\oneb} \wedge\theta \right)\\
& = \frac{d|\vf|^2}{1-|\vf|^2} \wedge \tilde\theta^1  + \frac{1}{1-|\vf|^2} \left( - iR \theta^1\wedge \theta - \vf_{\oneb}{}^1{}_{,1}\theta^1\wedge\theta^{\oneb} - \vf_{\oneb}{}^1{}_{,0}\theta\wedge \theta^{\oneb} + iR\vf_{\oneb}{}^1\theta^{\oneb} \wedge\theta \right)\\
& = \frac{d|\vf|^2}{1-|\vf|^2} \wedge \tilde\theta^1 - iR \tilde\theta^1\wedge \theta + \frac{1}{1-|\vf|^2} \left(  - \vf_{\oneb}{}^1{}_{,1}\theta^1\wedge\theta^{\oneb} - \vf_{\oneb}{}^1{}_{,0}\theta\wedge \theta^{\oneb} \right)\\
& = \frac{d|\vf|^2}{1-|\vf|^2} \wedge \tilde\theta^1 - iR \tilde\theta^1\wedge \theta - \vf_{\oneb}{}^1{}_{,1}\tilde\theta^1\wedge\tilde\theta^{\oneb} - \frac{ \vf_{\oneb}{}^1{}_{,0}}{1-|\vf|^2}\theta\wedge \theta^{\oneb}\\
& = \frac{d|\vf|^2}{1-|\vf|^2} \wedge \tilde\theta^1 - iR \tilde\theta^1\wedge \theta - \vf_{\oneb}{}^1{}_{,1}\tilde\theta^1\wedge\tilde\theta^{\oneb} - \frac{ \vf_{\oneb}{}^1{}_{,0}}{1-|\vf|^2}\theta\wedge \tilde\theta^{\oneb}- \frac{ \vf_1{}^{\oneb}\vf_{\oneb}{}^1{}_{,0}}{1-|\vf|^2}\theta\wedge \tilde\theta^1
\end{align*}
where we have used that $\vf_{\oneb}{}^1{}_{,0} = T\vf_{\oneb}{}^1 + 2\omega_{1}{}^1{}_0\vf_{\oneb}{}^1 = T\vf_{\oneb}{}^1 - 2iR\vf_{\oneb}{}^1$. It follows from the structure equation \cref{eqn:d-theta1} that
\beq \label{eqn:A-tilde}
\tilde A_{\oneb}{}^1 = - \frac{\vf_{\oneb}{}^1{}_{,0}}{1-|\vf|^2}, \quad \text{equivalently } \tilde A_{11} = -\vf_{11,0},
\eeq
and that
\beq
\tilde\omega_1{}^1 = -iR\theta  - \vf_{\oneb}{}^1{}_{,1}\tilde\theta^{\oneb} + \frac{\vf_1{}^{\oneb}\vf_{\oneb}{}^1{}_{,0}}{1-|\vf|^2} \theta - \frac{d|\vf|^2}{1-|\vf|^2}  \quad \mathrm{mod} \; \tilde\theta^1.
\eeq
Combining this last equation with the conjugate equation
\beq 
\tilde\omega_{\oneb}{}^{\oneb} = iR\theta - \vf_1{}^{\oneb}{}_{,\oneb} \tilde\theta^1 + \frac{\vf_{\oneb}{}^1\vf_1{}^{\oneb}{}_{,0}}{1-|\vf|^2} \theta - \frac{d|\vf|^2}{1-|\vf|^2}  \quad \mathrm{mod} \; \tilde\theta^{\oneb},
\eeq
it follows from the condition $\tilde\omega_1{}^1 + \tilde\omega_{\oneb}{}^{\oneb} = \tilde h^{1\oneb} d \tilde h_{1\oneb} = -\frac{d|\vf|^2}{1-|\vf|^2}$ that
\beq
\tilde \omega_1{}^1 = -iR\theta - \vf_{\oneb}{}^1{}_{,1} \tilde\theta^{\oneb} + \vf_1{}^{\oneb}{}_{,\oneb} \tilde\theta^1 -  \frac{\vf_{\oneb}{}^1\vf_1{}^{\oneb}{}_{,0}}{1-|\vf|^2} \theta  - \frac{\tilde Z_{\oneb}|\vf|^2}{1-|\vf|^2}\tilde\theta^{\oneb}.
\eeq
In particular
\beq \label{eqn:omega110}
\tilde{\omega}_1{}^1{}_0 = -2i - \frac{\vf_{\oneb}{}^1\vf_1{}^{\oneb}{}_{,0}}{1-|\vf|^2},
\eeq 
\beq \label{eqn:omega111}
\tilde\omega_1{}^1{}_1=\vf_1{}^{\oneb}{}_{,\oneb},
\eeq
and 
\beq\label{eqn:omega111bar}
 \tilde\omega_1{}^1{}_{\oneb} = - \vf_{\oneb}{}^1{}_{,1} - \frac{\tilde Z_{\oneb}|\vf|^2}{1-|\vf|^2}.
\eeq
Note also that 
\beq
\tilde\omega_{\oneb}{}^{\oneb}{}_{\oneb} = \overline{\tilde\omega_1{}^1{}_1} = \vf_{\oneb}{}^1{}_{,1}.
\eeq
We should point out that our conventions \cref{eqn:tilde-Z1,eqn:theta1-tilde} for the frame and coframe of the deformed structure differ from \cite{CE2020-deformations-and-embeddings} since in \cite{CE2020-deformations-and-embeddings} we normalized the Levi form $\tilde{h}_{1\oneb}$ to be $1$.

In order to compute the scalar curvature $\tilde R$ of the deformed structure we compute $d\tilde \omega_1{}^1$ mod $\theta$:
\begin{multline}
d\tilde \omega_1{}^1 = R h_{1\oneb} \theta^1\wedge \theta^{\oneb} - (\tilde Z_1 \vf_{\oneb}{}^1{}_{,1})\tilde\theta^1\wedge\tilde\theta^{\oneb} - (\tilde Z_{\oneb} \vf_{1}{}^{\oneb}{}_{,\oneb})\tilde\theta^1\wedge\tilde\theta^{\oneb}  \\
 - \frac{\vf_{\oneb}{}^1\vf_1{}^{\oneb}{}_{,0}}{1-|\vf|^2} i \tilde h_{1\oneb} \tilde\theta^1\wedge\tilde\theta^{\oneb} - \tilde Z_1 \frac{\tilde Z_{\oneb}|\vf|^2}{1-|\vf|^2}\tilde\theta^1\wedge\tilde\theta^{\oneb}
 \mod \theta.
\end{multline}
Using that $h_{1\oneb}\theta^1\wedge\theta^{\oneb} = \tilde h_{1\oneb} \tilde\theta^1\wedge\tilde\theta^{\oneb}$ we have
\begin{multline} \label{eqn:R-tilde}
\tilde R= R + \tilde{h}^{1\oneb} \left( 
-i\vf_{\oneb}{}^1\vf_1{}^{\oneb}{}_{,0} 
- \vf_{\oneb}{}^{1}{}_{,11} - \vf_1{}^{\oneb}\vf_{\oneb}{}^{1}{}_{,1\oneb}
- \vf_{1}{}^{\oneb}{}_{,\oneb\oneb} - \vf_{\oneb}{}^{1}\vf_{1}{}^{\oneb}{}_{,\oneb 1} 
\vphantom{- \frac{\tilde{Z}_1|\vf|^2}{1-|\vf|^2}\cdot \frac{\tilde{Z}_{\oneb}|\vf|^2}{1-|\vf|^2} - \frac{\tilde{Z}_1\tilde{Z}_{\oneb}|\vf|^2}{1-|\vf|^2}}
\right. \\
\left. - \frac{\tilde{Z}_1|\vf|^2}{1-|\vf|^2}\cdot \frac{\tilde{Z}_{\oneb}|\vf|^2}{1-|\vf|^2} - \frac{\tilde{Z}_1\tilde{Z}_{\oneb}|\vf|^2}{1-|\vf|^2}
 \right).
\end{multline}
For later use we record the following observation concerning the form of \cref{eqn:R-tilde}:
\begin{lemma} \label{lemma:R-tilde-form}
The scalar curvature $\tilde{R}$ of the deformed pseudohermitian structure is given by $(1-|\vf|^2)^{-3}$ times a polynomial in $\vf_{1}{}^{\oneb}$, $\vf_{\oneb}{}^{1}$ and their $Z_1$ and $Z_{\oneb}$ derivatives up to order $2$. Moreover, for each term in the polynomial the total number of derivatives on the $\vf_{1}{}^{\oneb}$, $\vf_{\oneb}{}^{1}$ factors is at most $2$.
\end{lemma}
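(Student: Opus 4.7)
The statement is essentially a direct inspection of formula (\ref{eqn:R-tilde}), combined with the commutator identity $[Z_1, Z_\oneb] = -iT$ on the standard CR sphere used to eliminate the single $T$-derivative appearing there. The plan is to verify the assertion term-by-term and then multiply through by $(1-|\vf|^2)^3$ to clear denominators.

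First I would identify the worst denominator in (\ref{eqn:R-tilde}): the outer factor is $\tilde h^{1\oneb} = (1-|\vf|^2)^{-1}$, while inside the brackets the worst denominator is $(1-|\vf|^2)^{-2}$, appearing in $\tilde Z_1|\vf|^2\cdot \tilde Z_\oneb|\vf|^2/(1-|\vf|^2)^2$, so $(1-|\vf|^2)^3$ suffices as a common denominator. The undifferentiated contribution $R = 2$ becomes $2(1-|\vf|^2)^3$, which is itself a polynomial in $\vf$ and its conjugate with no derivatives.

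Next I would unpack each covariant derivative appearing in the brackets. Because the Tanaka-Webster connection on the sphere has $\omega_1{}^1 = -2i\theta$ (hence the horizontal components $\omega_1{}^1{}_1$ and $\omega_1{}^1{}_\oneb$ vanish, cf.\ (\ref{eqn:sphere-omega-and-A})), the second covariant derivatives $\vf_\oneb{}^1{}_{,11}$, $\vf_1{}^\oneb{}_{,\oneb\oneb}$, $\vf_\oneb{}^1{}_{,1\oneb}$, $\vf_1{}^\oneb{}_{,\oneb 1}$ reduce cleanly to iterated horizontal operators such as $Z_1 Z_1 \vf_\oneb{}^1$, each contributing exactly two $Z_1, Z_\oneb$ derivatives. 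The composite expressions $\tilde Z_\alpha|\vf|^2$ and $\tilde Z_1\tilde Z_\oneb|\vf|^2$ expand, via $\tilde Z_1 = Z_1 + \vf_1{}^\oneb Z_\oneb$ and $\tilde Z_\oneb = Z_\oneb + \vf_\oneb{}^1 Z_1$ together with the product rule on $|\vf|^2 = \vf_1{}^\oneb\vf_\oneb{}^1$, into polynomial expressions in $\vf_\alpha{}^\beta$ and their horizontal derivatives, each term carrying at most two derivatives.

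The only nontrivial step is handling the single $T$-derivative in $\vf_1{}^\oneb{}_{,0}$. Using $\vf_1{}^\oneb{}_{,0} = T\vf_1{}^\oneb + 4i\vf_1{}^\oneb$ (read off from the sphere connection exactly as in the computation preceding (\ref{eqn:A-tilde})) together with the commutator identity $[Z_1, Z_\oneb] = -iT$ — which follows from $d\theta = i\theta^1\wedge\theta^\oneb$ combined with $\omega_1{}^1 = -2i\theta$ and $A_{11}=0$ via Cartan's structure equations applied to $\theta$ and $\theta^1$ — the $T$-derivative is rewritten as $T\vf_1{}^\oneb = i(Z_1 Z_\oneb - Z_\oneb Z_1)\vf_1{}^\oneb$. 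Substituting into $\vf_\oneb{}^1\vf_1{}^\oneb{}_{,0}$ produces terms carrying two horizontal derivatives on a single $\vf$-factor together with a lower-order product $\vf_\oneb{}^1\vf_1{}^\oneb$, so the total derivative count remains at most $2$. Assembling all of the above and clearing denominators by $(1-|\vf|^2)^3$ then yields the polynomial form asserted. I expect no serious obstacle here: the argument is essentially careful accounting of how derivatives are distributed once covariant derivatives are unpacked, with the only conceptual ingredient being the commutator identity that trades the lone $T$-derivative for a pair of horizontal derivatives.
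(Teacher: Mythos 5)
Your proof is correct and follows exactly the route the paper takes in its (one-sentence) proof: unpack the covariant derivatives in \cref{eqn:R-tilde} using $\omega_1{}^1 = -2i\theta$ from \cref{eqn:sphere-omega-and-A}, trade the lone Reeb derivative for the commutator $[Z_1,Z_{\oneb}]=-iT$, and clear denominators by $(1-|\vf|^2)^3$. The only difference is that you spell out the details (identifying $(1-|\vf|^2)^{-3}$ as the common denominator, deriving the commutator identity, and tracking the derivative count term by term) that the paper leaves to the reader.
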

\begin{proof}
The result follows easily by writing the covariant derivatives in \cref{eqn:R-tilde} in terms of $Z_1$ and $Z_{\oneb}$ derivatives using \cref{eqn:sphere-omega-and-A} (noting that a Reeb derivative may be expressed in terms of the commutator of $Z_1$ and $Z_{\oneb}$ derivatives) and then factoring $(1-|\vf|^2)^{-3}$ out of the expression, noting that $\tilde{h}^{1\oneb} = (1-|\vf|^2)^{-1}$.
\end{proof}

%Besides some basic information about the general form of the nonlinear terms (which can easily be ascertained from the above), for the most part we will only be interested in the linear terms in the expression for $\tilde{Q}_{11}$ and the linear and certain quadratic terms in the expression for $\tilde{\mathcal{O}}$.

Let $\tilde Q_{11}$ denote the Cartan umbilical tensor of the deformed CR structure, with respect to the admissible coframe $(\theta,\tilde\theta^1,\tilde\theta^{\oneb})$. From \cref{eqn:R-tilde} and \cref{eqn:Cartan-Q11} one easily computes that the linearization at $\vf_1{}^{\oneb}=0$ of the operator $\mathcal{Q}$ that takes $\vf_1{}^{\oneb}$ to $\tilde{Q}_1{}^{\oneb}$ is given by 
\beq \label{eqn:lin-Q}
D\mathcal{Q} \;:\; 
\vfd_1{}^{\oneb} \mapsto \frac{1}{6} \vfd_1{}^{\oneb}{}_{,}{}^{11}{}_{11} + \frac{1}{6} \vfd_{\oneb}{}^1{}_{,}{}^{\oneb\oneb}{}_{11} - \vfd_1{}^{\oneb}{}_{,00} - \frac{2i}{3}\vfd_1{}^{\oneb}{}_{,0}{}^1{}_1 + \frac{i}{2}R\vfd_1{}^{\oneb}{}_{,0}  
\eeq %(cf.\ \cite[(6.4)]{CE2018-obstruction-flatI})
with $R=2$. Aside from this, all that we need to know about $\tilde{Q}_{11}$ is the general form of the nonlinear terms when $\tilde{Q}_{11}$ is expressed in terms of $\vf_{1}{}^{\oneb}$, $\vf_{\oneb}{}^{1}$ and their derivatives up to order $4$. Combining \cref{lemma:R-tilde-form} with \cref{eqn:Cartan-Q11}, \cref{eqn:A-tilde} and \cref{eqn:omega110,eqn:omega111,eqn:omega111bar}, by the Leibniz rule it follows that for the Cartan umbilical tensor we have:
\begin{lemma} \label{lemma:Q11-tilde-form}
With respect to the admissible coframe $(\theta,\tilde\theta^1,\tilde\theta^{\oneb})$, the component $\tilde{Q}_{11}$ of the Cartan umbilical tensor of the deformed CR structure is given by $(1-|\vf|^2)^{-5}$ times a polynomial in $\vf_{1}{}^{\oneb}$, $\vf_{\oneb}{}^{1}$ and their $Z_1$ and $Z_{\oneb}$ derivatives up to order $4$. Moreover, for each term in the polynomial the total number of derivatives on the $\vf_{1}{}^{\oneb}$, $\vf_{\oneb}{}^{1}$ factors is at most $4$.
\end{lemma}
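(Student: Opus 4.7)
The proof follows the scheme of \cref{lemma:R-tilde-form}, promoted by two additional orders of covariant differentiation. My plan is to substitute \cref{lemma:R-tilde-form}'s closed form for $\tilde R$, the formula \cref{eqn:A-tilde} for $\tilde A_{11}$, and the connection components \cref{eqn:omega110,eqn:omega111,eqn:omega111bar} into the defining identity \cref{eqn:Cartan-Q11} for $\tilde Q_{11}$, and then track simultaneously (i) the power of $(1-|\vf|^2)^{-1}$ that appears and (ii) the total number of $Z_1, Z_{\oneb}$-derivatives that fall on $\vf_1{}^{\oneb}, \vf_{\oneb}{}^1$ in every resulting term.

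The first step is a reduction to $Z$-derivatives on the standard sphere. I would rewrite each tilded covariant derivative $\tilde\nabla_a$ as the corresponding frame-field derivative ($\tilde Z_1$, $\tilde Z_{\oneb}$, or $T$) plus multiplication by the appropriate component of $\tilde\omega_1{}^1$; by \cref{eqn:omega110,eqn:omega111,eqn:omega111bar} each such component is a rational expression in $\vf, \vf_{\oneb}{}^1$ and their first $Z$-derivatives, with denominator at most a single factor of $(1-|\vf|^2)$. By \cref{eqn:tilde-Z1}, each of $\tilde Z_1, \tilde Z_{\oneb}$ is itself a first-order differential operator in $Z_1, Z_{\oneb}$ with polynomial coefficients in $\vf, \vf_{\oneb}{}^1$, and the commutator identity $[Z_1, Z_{\oneb}] = -iT$ on the standard sphere (which follows from \cref{eqn:dtheta-on-S3} together with \cref{eqn:sphere-omega-and-A}) lets us trade any Reeb derivative for two $Z$-derivatives modulo lower-order terms. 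After these substitutions $\tilde R$ has the form $(1-|\vf|^2)^{-3}$ times a polynomial of total $Z$-order at most $2$ (\cref{lemma:R-tilde-form}), while $\tilde A_{11}$ has the form $(1-|\vf|^2)^{-1}$ times such a polynomial of total $Z$-order at most $2$.

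The second step is Leibniz-rule bookkeeping on the four terms of \cref{eqn:Cartan-Q11}. Each first-order frame derivative applied to $P/(1-|\vf|^2)^k$ raises the denominator power by at most one and the $Z$-order of the numerator by at most one. Tracking worst cases: $\tilde\nabla_1\tilde\nabla_1 \tilde R$ contributes denominator power at most $3+2=5$ and $Z$-order at most $2+2=4$; $\tilde R\,\tilde A_{11}$ contributes denominator $3+1=4$ and $Z$-order $2+2=4$; $\tilde\nabla_0 \tilde A_{11}$, after replacing the Reeb derivative by a commutator of $Z$'s, contributes denominator $1+2=3$ and $Z$-order $2+2=4$; and $\tilde\nabla_1\tilde\nabla^1 \tilde A_{11}$, where the raised index contributes an extra factor $\tilde h^{1\oneb}=(1-|\vf|^2)^{-1}$, contributes denominator $1+1+2=4$ and $Z$-order $2+2=4$. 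Placing the four terms over the common denominator $(1-|\vf|^2)^5$ then yields the asserted form.

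I expect the main obstacle is simply the denominator bookkeeping: the bound $5$ is achieved only by the $\tilde\nabla_1\tilde\nabla_1 \tilde R$ term, so the danger is an off-by-one error from forgetting either of the two $(1-|\vf|^2)^{-1}$ factors introduced when successive $\tilde Z_1$'s differentiate \cref{lemma:R-tilde-form}'s denominator, or the factor $\tilde h^{1\oneb}$ that appears when raising the index in $\tilde\nabla^1$. Once this accounting is done carefully, the conclusion is immediate.
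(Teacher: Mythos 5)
Your proposal is correct and follows essentially the same route as the paper: substitute \cref{lemma:R-tilde-form}, \cref{eqn:A-tilde}, and \cref{eqn:omega110,eqn:omega111,eqn:omega111bar} into \cref{eqn:Cartan-Q11}, reduce to $Z_1,Z_{\oneb}$ derivatives (trading $T$ via $[Z_1,Z_{\oneb}]=-iT$), and do Leibniz bookkeeping on the power of $(1-|\vf|^2)^{-1}$ and the total derivative count. One tiny overcount, harmless for the conclusion: $\tilde A_{11}=-\vf_{11,0}$ is actually a polynomial with no $(1-|\vf|^2)$ denominator at all (the $1-|\vf|^2$ appears only in the raised-index version $\tilde A_{\oneb}{}^1$), so your assignments of denominator power to the $\tilde A_{11}$-terms are upper bounds rather than sharp; the maximum power $5$ is still achieved only by $\tilde\nabla_1\tilde\nabla_1\tilde R$, as you say.
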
 

Let $\tilde\nabla$ denote the Tanaka-Webster connection of the deformed pseudohermitian structure, which has connection form $\tilde\omega_1{}^1$ with respect to the admissible coframe $(\theta,\tilde\theta^1,\tilde\theta^{\oneb})$. We use $\tilde h_{1\oneb}$ to raise and lower indices for objects with a tilde. Then we have
\beq
\tilde{\mathcal{O}} = \frac{1}{3} \left( \tilde\nabla^1\tilde\nabla^1\tilde Q_{11} - i\tilde A^{11} \tilde Q_{11}  \right) = \frac{1}{3} \left( \tilde\nabla_{\oneb}\tilde\nabla_{\oneb}\tilde{Q}^{\oneb\oneb} - i\tilde A_{\oneb\oneb} \tilde Q^{\oneb\oneb}  \right)
\eeq
where
\begin{align} \label{eqn:nabla1-nabla1-Q11}
\tilde\nabla_{\oneb}\tilde\nabla_{\oneb}\tilde{Q}^{\oneb\oneb} 
& = (\tilde Z_{\oneb} + \tilde\omega_{\oneb}{}^{\oneb}{}_{\oneb})
(\tilde Z_{\oneb} + 2\tilde\omega_{\oneb}{}^{\oneb}{}_{\oneb})  \tilde{Q}^{\oneb\oneb} \\
\nonumber
& = (\tilde Z_{\oneb} + \vf_{\oneb}{}^1{}_{,1})
(\tilde Z_{\oneb} + 2\vf_{\oneb}{}^1{}_{,1})  \tilde{Q}^{\oneb\oneb} \\
\nonumber
& = (Z_{\oneb} + \vf_{\oneb}{}^1Z_1 + \vf_{\oneb}{}^1{}_{,1})
(Z_{\oneb} + \vf_{\oneb}{}^1Z_1 + 2\vf_{\oneb}{}^1{}_{,1})  \tilde{Q}^{\oneb\oneb}
\end{align}
and $\tilde{A}_{\oneb\oneb} = -\vf_{\oneb\oneb,0}$. Noting that $\tilde{Q}^{\oneb\oneb} = \tilde{h}^{1\oneb} \tilde{Q}_1{}^{\oneb} = (1-|\vf|^2) \tilde{Q}_1{}^{\oneb}$, it follows immediately that the linearization at $\vf_1{}^{\oneb}=0$ of the operator that takes $\vf_1{}^{\oneb}$ to $\tilde{\mathcal{O}}$ is given by 
\beq
D\mathcal{O}=(Z_{\oneb})^2D\mathcal{Q}.
\eeq 
Besides the linear terms in $\tilde{\mathcal{O}}$, it will suffice for our purposes to consider only the nonlinear terms that appear in $\int_{S^3} \tilde{\mathcal{O}} \,\theta\wedge d\theta$. In particular, note that $\tilde\nabla^1\tilde\nabla^1\tilde Q_{11}=\tilde\nabla_{\oneb}\tilde\nabla_{\oneb}\tilde{Q}^{\oneb\oneb}$ integrates to zero with respect to $\theta\wedge d\theta$ by the divergence formula of \cite{Lee1988}. (Another way to see this is to write \cref{eqn:nabla1-nabla1-Q11} as $\tilde\nabla_{\oneb}\tilde\nabla_{\oneb}\tilde{Q}^{\oneb\oneb}$ equals
\begin{equation*}
Z_{\oneb}\left[
(Z_{\oneb} + \vf_{\oneb}{}^1Z_1 + 2\vf_{\oneb}{}^1{}_{,1})  \tilde{Q}^{\oneb\oneb}\right] + Z_1\left[ \vf_{\oneb}{}^1 
(Z_{\oneb} + \vf_{\oneb}{}^1Z_1 + 2\vf_{\oneb}{}^1{}_{,1})  \tilde{Q}^{\oneb\oneb} \right]
\end{equation*}
and note that the images of $Z_1$ and $Z_{\oneb}$ are both $L^2$ orthogonal to the constants; see \cref{sec:spherical-harmonics} below.) We record this observation in the following lemma:
\begin{lemma}\label{lem:obstruction-integral}
For the deformed CR structure we have
\beq\label{eqn:integral-of-obstruction}
\int_{S^3} \tilde{\mathcal{O}} \, \theta\wedge d\theta = -i\int_{S^3} \tilde{A}_{\oneb}{}^1 \tilde{Q}_1{}^{\oneb} \, \theta\wedge d\theta = i\int_{S^3} \dfrac{\vf_{\oneb}{}^1{}_{,0}\tilde{Q}_1{}^{\oneb} }{1-|\vf|^2} \, \theta\wedge d\theta.
\eeq
\end{lemma}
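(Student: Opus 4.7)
The plan is to split $\tilde{\mathcal{O}}$ into its two natural pieces, $\tilde\nabla_{\oneb}\tilde\nabla_{\oneb}\tilde Q^{\oneb\oneb}$ and $-i\tilde A_{\oneb\oneb}\tilde Q^{\oneb\oneb}$, and show that only the torsion term survives integration over $S^3$. The second equality is then just a substitution of the torsion formula already established in \cref{eqn:A-tilde}, so the content of the lemma is entirely the vanishing of $\int_{S^3}\tilde\nabla_{\oneb}\tilde\nabla_{\oneb}\tilde Q^{\oneb\oneb}\,\theta\wedge d\theta$.

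The cleanest route is to invoke the CR divergence theorem of Lee \cite{Lee1988}: on any compact pseudohermitian manifold the integral $\int \tilde\nabla^{\alpha}\sigma_{\alpha}\,\theta\wedge d\theta$ vanishes for any smooth $(0,1)$-vector density $\sigma_{\alpha}$ (equivalently, for any smooth $\sigma^{\oneb}$). Applying this once to $\sigma^{\oneb}=\tilde\nabla_{\oneb}\tilde Q^{\oneb\oneb}$ (viewed with one free index) yields the required vanishing. Alternatively, as flagged in the excerpt, one may argue by hand using \cref{eqn:omega111} and its conjugate $\tilde\omega_{\oneb}{}^{\oneb}{}_{\oneb}=\vf_{\oneb}{}^1{}_{,1}$ to expand
\[
\tilde\nabla_{\oneb}\tilde\nabla_{\oneb}\tilde Q^{\oneb\oneb}=Z_{\oneb}\!\bigl[(Z_{\oneb}+\vf_{\oneb}{}^1Z_1+2\vf_{\oneb}{}^1{}_{,1})\tilde Q^{\oneb\oneb}\bigr]+Z_1\!\bigl[\vf_{\oneb}{}^1(Z_{\oneb}+\vf_{\oneb}{}^1Z_1+2\vf_{\oneb}{}^1{}_{,1})\tilde Q^{\oneb\oneb}\bigr],
\]
and observe that on the standard pseudohermitian sphere both $Z_1 f$ and $Z_{\oneb} f$ have mean zero against $\theta\wedge d\theta$ (which will be immediate once the spherical harmonic decomposition of \cref{sec:spherical-harmonics} is in hand, since $Z_1$ and $Z_{\oneb}$ act with no constant piece). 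Either argument takes only a few lines.

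Once the double-divergence term is discarded, the first equality follows from
\[
\tilde A^{11}\tilde Q_{11}=\tilde A_{\oneb}{}^1\tilde Q_1{}^{\oneb},
\]
which is just index gymnastics using $\tilde h_{1\oneb}$. Substituting $\tilde A_{\oneb}{}^1=-\vf_{\oneb}{}^1{}_{,0}/(1-|\vf|^2)$ from \cref{eqn:A-tilde} yields the second equality. The only step requiring any care is the bookkeeping for the weights: $\tilde{\mathcal O}$ is a density relative to $\theta\wedge d\theta$, but since we have fixed the contact form $\theta$ throughout \cref{sec:deformations}, the identification of $\tilde{\mathcal O}$ with the scalar in \cref{eqn:obstruction} is valid and no additional Jacobian factor appears. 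I do not foresee any serious obstacle here; the lemma is essentially a direct consequence of integration by parts, and its role in the paper is simply to isolate the nonlinear terms of $\mathcal O$ that contribute to the integral for later use in the formal rigidity argument.
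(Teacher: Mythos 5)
Your proof is correct and follows the same route as the paper: the paper likewise cites Lee's divergence formula to kill the $\tilde\nabla_{\oneb}\tilde\nabla_{\oneb}\tilde Q^{\oneb\oneb}$ term and offers exactly the same alternative argument (rewriting as $Z_{\oneb}[\cdots]+Z_1[\cdots]$ and noting that images of $Z_1$ and $Z_{\oneb}$ are $L^2$-orthogonal to constants), then substitutes the torsion formula from \cref{eqn:A-tilde}. One could quibble that both your write-up and the lemma as stated silently drop the overall factor $\tfrac13$ from \cref{eqn:obstruction}, but since that constant is harmless and the paper is itself inconsistent about it, this is not a substantive issue.
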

Our aim is to show that the quantity on the right hand side of \cref{eqn:integral-of-obstruction} obstructs the solvability of the equation  $\tilde{\mathcal{O}} = 0$ when $\vf_1{}^{\oneb}$ is sufficiently small in the Folland-Stein space $H^3_{FS}$. (Recall that $H^s_{FS}$ is the anisotropic Sobolev space of functions having $s$ derivatives in $L^2$, where the derivatives are taken only in $Z_1$ and $Z_{\oneb}$ directions.)
In order to ensure control of the nonlinear terms in the expression for $\tilde{\mathcal{O}}$ we need the following lemma, which is an easy consequence of \cref{lemma:Q11-tilde-form} and the formulae derived in this section.
\begin{lemma} \label{lemma:O-tilde-form}
With respect to the contact form $\theta$, the obstruction density $\tilde{\mathcal{O}}$ of the deformed CR structure is given by $(1-|\vf|^2)^{-9}$ times a polynomial in $\vf_{1}{}^{\oneb}$, $\vf_{\oneb}{}^{1}$ and their $Z_1$ and $Z_{\oneb}$ derivatives up to order $6$. Moreover, for each term in the polynomial the total number of derivatives on the $\vf_{1}{}^{\oneb}$, $\vf_{\oneb}{}^{1}$ factors is at most $6$.
\end{lemma}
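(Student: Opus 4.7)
The plan is to derive \cref{lemma:O-tilde-form} as a straightforward consequence of \cref{lemma:Q11-tilde-form} by propagating the denominator and derivative bounds through the three remaining operations that produce $\tilde{\mathcal{O}}$ from $\tilde{Q}_{11}$, namely (i) raising both indices using $\tilde{h}^{1\oneb} = (1-|\vf|^2)^{-1}$, (ii) applying $\tilde\nabla_{\oneb}\tilde\nabla_{\oneb}$ via the explicit formula \cref{eqn:nabla1-nabla1-Q11}, and (iii) multiplying by $-i\tilde{A}_{\oneb\oneb}$ with $\tilde{A}_{\oneb\oneb}$ re-expressed in terms of $Z_1$, $Z_{\oneb}$ derivatives.

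First I would use $\tilde{Q}^{\oneb\oneb} = (\tilde{h}^{1\oneb})^{2}\tilde{Q}_{11} = (1-|\vf|^2)^{-2}\tilde{Q}_{11}$ together with \cref{lemma:Q11-tilde-form} to write $\tilde{Q}^{\oneb\oneb} = (1-|\vf|^2)^{-7} P$, where $P$ is a polynomial in $\vf_1{}^{\oneb}$, $\vf_{\oneb}{}^{1}$ and their $Z_1$, $Z_{\oneb}$ derivatives of order $\leq 4$ with at most $4$ derivatives per monomial. Next I would apply \cref{eqn:nabla1-nabla1-Q11}, which exhibits each $\tilde\nabla_{\oneb}$ as a first-order differential operator of the form $Z_{\oneb} + \vf_{\oneb}{}^{1} Z_1 + c\,\vf_{\oneb}{}^{1}{}_{,1}$ with $c$ depending only on the tensorial weight (this is read off from \cref{eqn:omega111,eqn:omega111bar} together with $\tilde{Z}_{\oneb} = Z_{\oneb} + \vf_{\oneb}{}^{1}Z_{1}$). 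The key Leibniz-rule bookkeeping step is the observation that if $R$ is a polynomial in $\vf_1{}^{\oneb}, \vf_{\oneb}{}^{1}$ and their $Z$-derivatives with at most $d$ derivatives per monomial, then applying such a first-order operator to $(1-|\vf|^2)^{-k} R$ produces $(1-|\vf|^2)^{-(k+1)} R'$ with $R'$ having at most $d+1$ derivatives per monomial: the chain rule on $(1-|\vf|^2)^{-k}$ raises the denominator power by one at the cost of a first $Z$-derivative of $|\vf|^2$ in the numerator, the coefficients of the operator contain at most one derivative on a $\vf$ factor, and any surplus factor of $(1-|\vf|^2)$ from the $Z_{\oneb}$, $Z_{1}$ terms is absorbed into the denominator. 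Two applications of this principle turn $(1-|\vf|^2)^{-7} P$ into $(1-|\vf|^2)^{-9}$ times a polynomial with at most $4+2 = 6$ derivatives per monomial.

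For the torsion term I would combine \cref{eqn:A-tilde}, which gives $\tilde{A}_{\oneb\oneb} = -\vf_{\oneb}{}^{1}{}_{,0}$, with the identity $T = i[Z_{1}, Z_{\oneb}]$ on the standard sphere (obtained from $d\theta = i\theta^{1}\wedge\theta^{\oneb}$ and $\omega_1{}^1 = -2i\theta$, so that $[Z_1, Z_{\oneb}] = -iT$). Expanding the comma-zero covariant derivative on the sphere in terms of $T$ and the connection coefficients of $(\theta, \theta^{1}, \theta^{\oneb})$, and then replacing $T$ by the commutator, exhibits $\tilde{A}_{\oneb\oneb}$ as a polynomial in $\vf_1{}^{\oneb}, \vf_{\oneb}{}^{1}$ and their $Z$-derivatives of order $\leq 2$ with at most $2$ derivatives per monomial and no denominator. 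Therefore $\tilde{A}_{\oneb\oneb}\tilde{Q}^{\oneb\oneb}$ is $(1-|\vf|^2)^{-7}$ times a polynomial with at most $2+4=6$ derivatives per monomial, which we may rewrite as $(1-|\vf|^2)^{-9}$ times a polynomial with the same derivative budget by multiplying numerator and denominator by $(1-|\vf|^2)^{2}$. Adding this to the contribution from $\tilde\nabla_{\oneb}\tilde\nabla_{\oneb}\tilde{Q}^{\oneb\oneb}$ through $\tilde{\mathcal{O}} = \tfrac{1}{3}(\tilde\nabla_{\oneb}\tilde\nabla_{\oneb}\tilde{Q}^{\oneb\oneb} - i\tilde{A}_{\oneb\oneb}\tilde{Q}^{\oneb\oneb})$ yields the claimed structural form, with order $\leq 6$ derivatives appearing, at most $6$ derivatives distributed across the $\vf$ factors in any monomial, and overall denominator $(1-|\vf|^2)^{-9}$. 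The proof is purely bookkeeping; the only care required is to verify that neither the index raising, the commutator reduction of $T$, nor the Leibniz expansion of $\tilde\nabla_{\oneb}$ can raise the derivative count by more than one per first-order operation, which is transparent from the explicit formulas \cref{eqn:omega111,eqn:omega111bar,eqn:A-tilde,eqn:nabla1-nabla1-Q11}.
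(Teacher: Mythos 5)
Your proof is correct and follows the same route the paper takes (the paper itself states this lemma as ``an easy consequence of \cref{lemma:Q11-tilde-form} and the formulae derived in this section'' without spelling out the bookkeeping, which is exactly what you have filled in). The three-step decomposition---raise indices via $\tilde{h}^{1\oneb}$, expand $\tilde\nabla_{\oneb}\tilde\nabla_{\oneb}$ using \cref{eqn:nabla1-nabla1-Q11}, handle the torsion factor via \cref{eqn:A-tilde} and the commutator reduction $T=i[Z_1,Z_{\oneb}]$---and the Leibniz-rule invariant you state (each first-order operation raises the denominator power by one and the derivative budget by at most one) together give precisely the denominator $(1-|\vf|^2)^{-9}$ and the bound of six derivatives per monomial, matching the lemma.
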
 

Applying $(Z_1)^2$ to this expression for $\tilde{\mathcal{O}}$ and using the Leibniz rule we have:
\begin{lemma} \label{lemma:Z1Z1O-tilde-form}
With respect to the contact form $\theta$, $(Z_1)^2\tilde{\mathcal{O}}$ is given by $(1-|\vf|^2)^{-11}$ times a polynomial in $\vf_{1}{}^{\oneb}$, $\vf_{\oneb}{}^{1}$ and their $Z_1$ and $Z_{\oneb}$ derivatives up to order $8$. Moreover, for each term in the polynomial the total number of derivatives on the $\vf_{1}{}^{\oneb}$, $\vf_{\oneb}{}^{1}$ factors is at most $8$.
\end{lemma}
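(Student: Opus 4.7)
The plan is to deduce the lemma directly from Lemma \ref{lemma:O-tilde-form} via two applications of the Leibniz rule, keeping careful track of how the power of $(1-|\vf|^2)^{-1}$ and the derivative counts evolve.

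By Lemma \ref{lemma:O-tilde-form} I can write $\tilde{\mathcal{O}} = (1-|\vf|^2)^{-9} P$, where $P$ is a polynomial in $\vf_1{}^{\oneb}$, $\vf_{\oneb}{}^1$ and their $Z_1, Z_{\oneb}$ derivatives up to order $6$, with the total derivative count on the $\vf$-factors in each monomial at most $6$. The first step is to apply $Z_1$ and observe
\[
Z_1\bigl((1-|\vf|^2)^{-k}\bigr) = k(1-|\vf|^2)^{-k-1} Z_1(|\vf|^2),
\]
together with $Z_1(|\vf|^2) = (Z_1\vf_1{}^{\oneb})\vf_{\oneb}{}^1 + \vf_1{}^{\oneb}(Z_1\vf_{\oneb}{}^1)$, which is itself a polynomial in $\vf_1{}^{\oneb}, \vf_{\oneb}{}^1$ and their first derivatives with total derivative count $1$ per monomial.

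Next I compute
\[
Z_1\tilde{\mathcal{O}} = (1-|\vf|^2)^{-10}\Bigl[(1-|\vf|^2)\, Z_1 P + 9\, Z_1(|\vf|^2) P\Bigr].
\]
The bracketed expression is a polynomial in $\vf_1{}^{\oneb}, \vf_{\oneb}{}^1$ and their derivatives: in $Z_1P$ the maximum individual derivative order is $7$ and the total derivative count per monomial is at most $7$; multiplying by $(1-|\vf|^2)$ leaves these bounds unchanged, while in $9\, Z_1(|\vf|^2) P$ each monomial has total derivative count at most $1+6 = 7$ and individual order at most $\max(1,6)=6\le 7$. Thus $Z_1\tilde{\mathcal{O}} = (1-|\vf|^2)^{-10} Q$ with $Q$ a polynomial as described, of derivative order and total derivative count both at most $7$.

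Applying $Z_1$ one more time in exactly the same way gives
\[
(Z_1)^2\tilde{\mathcal{O}} = (1-|\vf|^2)^{-11}\Bigl[(1-|\vf|^2)\, Z_1 Q + 10\, Z_1(|\vf|^2) Q\Bigr],
\]
and the same bookkeeping shows that the bracketed polynomial has individual derivative order at most $8$ and total derivative count per monomial at most $8$. This is exactly the claimed form. There is no real obstacle; the only point requiring care is to note that multiplying by $(1-|\vf|^2)$ (a polynomial with no derivatives) preserves both bounds, so that the bookkeeping advances uniformly by one at each application of $Z_1$.
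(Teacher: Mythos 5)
Your proof is correct and follows exactly the approach the paper indicates: the paper derives \cref{lemma:Z1Z1O-tilde-form} from \cref{lemma:O-tilde-form} simply by applying $(Z_1)^2$ and the Leibniz rule, which is precisely the bookkeeping you carry out.
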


\section{Spherical Harmonics and the Slice Theorem} \label{sec:spherical-harmonics}

% H_{p,q} , \mathfrak{D}_{BE}, \mathfrak{D}'_{BE}, \mathfrak{D}_0, 
% \mathfrak{D}'_{BE} \oplus \mathfrak{D}_0^{\perp} 
% Slice theorem, each CR structure is represented... unique up to the action of PSU(2,1) corresponding natural action on the corresponding CR structures 
%amounts to gauge fixing -- avoids the need for a De Turk type trick...
%Action of D\mathcal{Q} in the slice - injectivity - lemma....

%NEW SECTION
%linearized solutions
%image of linearized obstruction operator D\mathcal{O} - remark about Kuranishi map - also remark that the Cheng-Lee slice gives an easier way to compute the image of D\mathcal{O} 

Our study of the obstruction flatness equation will be greatly illuminated by working in terms of spherical harmonics. We therefore introduce for each $p,q\geq 0$ the spherical harmonic space $H_{p,q}$ of functions on $S^3$ that are the restrictions of harmonic homogeneous polynomials of bidegree $(p,q)$ on $\mathbb{C}^2$. Identifying a deformation tensor $\vf$ with its component function $\vf_1{}^{\oneb}$ with respect to the standard frame on the unit sphere $S^3\subset \mathbb{C}^2$, we denote the $H_{p,q}$ component of $\varphi= \vf_1{}^{\oneb}$ by $\varphi_{p,q} = (\vf_1{}^{\oneb})_{p,q}$, so that the $L^2$ orthogonal spherical harmonic decomposition of $\vf$ is given by $\varphi = \sum_{p,q} \varphi_{p,q}$.

Working in spherical harmonics allows us to introduce the following natural spaces of deformation tensors on $S^3$. Let $\mathfrak{D}_0$ denote the set of all deformation tensors $\varphi$ such that $\varphi_{p,q}=0$ if $q=0,1$; $\mathfrak{D}_0$ is then the space of all infinitesimally embeddable deformations of the unit sphere $S^3\subset\mathbb{C}^2$, in the sense that $\dot{\vf} \in \mathfrak{D}_0$ if and only if there is a smooth family $\vf(t)$ of embeddable deformations of $S^3\subset\mathbb{C}^2$ such that $\vf(0)=0$ and $\left.\frac{d}{dt}\right|_{t=0} \vf(t) = \dot{\vf}$ (see, e.g., \cite{CE2020-deformations-and-embeddings}). Let $\mathfrak{D}_0^{\perp}$ denote the set all deformation tensors $\varphi$ such that $\varphi_{p,q}=0$ unless $q=0,1$; $\mathfrak{D}_0^{\perp}$ then represents directions in which one can deform the sphere for which the deformed structure is ``as far as possible'' from being embeddable. 

Let $\mathfrak{D}_{BE} \subset \mathfrak{D}_0$ denote the set of all deformation tensors $\varphi$ such that $\varphi_{p,q}=0$ if $q < p + 4$ (the ``BE'' here stands for Burns-Epstein \cite{BurnsEpstein1990b} who showed that all sufficiently small deformations $\vf \in \mathfrak{D}_{BE}$ are embeddable in $\mathbb{C}^2$; note that our deformation tensor $\vf$ is the conjugate of Burns and Epstein's). The condition $\vf \in \mathfrak{D}_{BE}$ is natural in that it corresponds to saying that the tensor $\vf_{\oneb}{}^1\theta^{\oneb}\otimes Z_1$ has only nonnegative Fourier coefficients with respect to the standard $S^1$ action on $S^3\subset \mathbb{C}^2$ \cite{Bland1994}. It turns out that any sufficiently small embeddable deformation of the CR sphere can be normalized by a contact diffeomorphism so that its deformation tensor $\vf$ lies in $\mathfrak{D}_{BE}$ (see \cite{Bland1994,CE2020-deformations-and-embeddings}). Moreover, this deformation tensor $\vf$ is unique up to the action of the group $\mathrm{PSU}(2,1)$ on $(S^3,H)$ (i.e. up to the group of CR automorphisms of the standard CR structure on $S^3$) and of the group of $S^1$-equivariant contact diffeomorphisms of $(S^3,H)$. One can further normalize $\vf$ by the action of the $S^1$-equivariant contact diffeomorphisms to lie in the space $\mathfrak{D}'_{BE}$ given by those $\vf \in \mathfrak{D}_{BE}$ that additionally satisfy the reality condition $\im\,((Z_{\oneb})^2 \varphi_{p,p+4}) = 0$ along the critical diagonal (note that deformation tensors whose spherical harmonic decomposition is supported on the critical diagonal $q=p+4$ correspond to the $S^1$-invariant deformations of the standard CR sphere). The representative $\vf\in \mathfrak{D}'_{BE}$ is unique up to the action of $\mathrm{PSU}(2,1)$. For the general case (dropping the assumption that the deformation be embeddable) we have:
\begin{theorem}[\cite{CE2020-deformations-and-embeddings}] \label{thm:slice-thm}
Any sufficiently small deformation $\hat{J}$ of the standard CR $3$-sphere $(S^3,H,J)$ may be normalized by the action of a contact diffeomorphism of $(S^3,H)$ so that it is represented by a deformation tensor $\vf\in\mathfrak{D}'_{BE} \oplus \mathfrak{D}_0^{\perp}$. This deformation tensor is unique up to the action of $PSU(2,1)$.
\end{theorem}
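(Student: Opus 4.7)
The plan is to prove the slice theorem by (i) computing the infinitesimal action of the contact diffeomorphism group on deformation tensors at the standard CR $3$-sphere, (ii) using the spherical harmonic decomposition of \cref{sec:spherical-harmonics} to verify that $\mathfrak{D}'_{BE} \oplus \mathfrak{D}_0^{\perp}$ is a transverse slice modulo $\mathfrak{psu}(2,1)$, and (iii) invoking an implicit function theorem in Folland-Stein spaces and then bootstrapping to the smooth category.

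First I would compute the linearized action. A contact vector field on $(S^3,H)$ is generated by a contact Hamiltonian $f\in C^{\infty}(S^3,\mathbb{R})$ via $X_f\lrcorner \theta = f$ and $X_f \lrcorner d\theta \equiv -df \mod \theta$. Computing $\mathcal{L}_{X_f} J$ and extracting its component along $Z_{\oneb}\otimes \theta^1$ gives a second-order linear operator $L:C^{\infty}(S^3,\mathbb{R})\to\mathfrak{D}$ of the schematic form $L(f)= f_{,\oneb\oneb} + \text{lower order}$. The range of $L$ is the tangent space at the origin to the contact diffeomorphism orbit through $J_{\mathrm{std}}$, while $\ker L$ is (the real form of) $\mathfrak{su}(2,1)$, the Lie algebra of CR infinitesimal symmetries of the standard sphere.

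Next I would analyze $L$ sector-by-sector using the bidegree $(p,q)$ spherical harmonic decomposition $C^{\infty}(S^3,\mathbb{R})=\bigoplus H_{p,q}$ (with the reality constraint $\overline{H_{p,q}}=H_{q,p}$). Because two applications of $Z_{\oneb}$ shift bidegree by $(0,-2)$, the image of $L$ lies entirely in the $q\geq 2$ part of $\mathfrak{D}$, so $\mathfrak{D}_0^{\perp}$ sits entirely in the $L^2$ cokernel. For $q\geq 2$ one checks that $L$ restricted to each bidegree sector maps onto its image sector except on the ``Burns-Epstein strip'' $q\geq p+4$, where finite-dimensional obstructions appear; on the boundary case $q=p+4$ these reduce to exactly the reality condition $\im((Z_{\oneb})^2\vf_{p,p+4})=0$, whose kernel is generated by the $S^1$-equivariant contact subgroup. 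Collecting the sectors, $\mathfrak{D}'_{BE}\oplus\mathfrak{D}_0^{\perp}$ is identified as a complement in $\mathfrak{D}$ to $\mathrm{range}(L)$ modulo the finite-dimensional contribution of $\mathfrak{psu}(2,1)$.

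Finally I would apply the inverse function theorem to the map
\[
\Phi:(\Psi,\vf)\longmapsto \text{deformation tensor of }\Psi^*(J_{\mathrm{std}}+\vf),
\]
defined on a neighborhood of $(\mathrm{id},0)$ inside $\mathrm{ContDiff}(S^3,H)\times(\mathfrak{D}'_{BE}\oplus\mathfrak{D}_0^{\perp})$, taking values in $\mathfrak{D}$; working in Folland-Stein classes $H^s_{FS}$ for $s$ sufficiently large, the derivative at $(\mathrm{id},0)$ is the sum (range of $L$)$+$ inclusion of the slice, which surjects onto $\mathfrak{D}$ modulo $\mathfrak{psu}(2,1)$ by the sector analysis. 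Subellipticity of $L$ on each bidegree block (with spectral gap away from $\ker L$) provides a bounded right inverse, so the IFT gives the desired normalization map; smoothness of the normalizing diffeomorphism for smooth input follows by a standard subelliptic regularity bootstrap, and the uniqueness up to $PSU(2,1)$ reflects exactly the residual finite-dimensional kernel. The principal obstacle is the spherical harmonic bookkeeping in step (ii): correctly identifying the full cokernel of $L$ as $\mathfrak{D}'_{BE}\oplus\mathfrak{D}_0^{\perp}\oplus\mathfrak{psu}(2,1)$, isolating the reality condition on the critical diagonal $q=p+4$, and matching it to the $S^1$-equivariant contact gauge without introducing spurious residual symmetries.
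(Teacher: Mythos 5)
This paper does not prove \cref{thm:slice-thm}; the theorem is quoted directly from the earlier paper \cite{CE2020-deformations-and-embeddings} and used here as a black box. So there is no internal proof to compare your proposal against. That said, your outline is a plausible sketch of the standard strategy for a slice theorem (linearize the diffeomorphism action, decompose in spherical harmonics, IFT and bootstrap), but it has some concrete problems.

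First, your formula for the linearized contact action is conjugated wrong relative to the convention $\vf = \vf_1{}^{\oneb}$ used in this paper: the linearized action is $f\mapsto iZ_1Z_1 f$ (as recorded in \cref{rem:H1-H2}), not $f_{,\oneb\oneb}$. This makes your bidegree-shift claim internally inconsistent: two applications of $Z_{\oneb}$ shift $(p,q)\to(p+2,q-2)$, so $(Z_{\oneb})^2 f$ has image in $p\geq 2$, not $q\geq 2$, whereas $(Z_1)^2 f$ has image in $q\geq 2$, which is what you need and what the paper uses (indeed $\mathfrak{D}_0 = \mathrm{im}(Z_1)^2$, over complex-valued $f$). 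Second, the claim that ``finite-dimensional obstructions appear'' on the Burns-Epstein strip $q\geq p+4$ is wrong: the cokernel of $L$ restricted to real functions is infinite-dimensional there — it is precisely $\mathfrak{D}'_{BE}$ — and the correct statement is that for real $f$ each free complex choice $f_{p',q'}$ (with $p'>q'\geq 2$) simultaneously determines the $(Z_1)^2$-images at a pair of sibling sectors, one with $q<p+4$ and one with $q>p+4$, while the critical diagonal $q=p+4$ receives only a real half from $f_{p+2,p+2}\in\mathbb{R}$, matching the reality constraint in $\mathfrak{D}'_{BE}$. Your phrasing of ``mapping onto its image sector except on the BE strip'' does not capture this pairing. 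Third, the appeal to a Banach-space inverse function theorem is too quick: the orbit map $\Psi\mapsto \Psi^*J$ of the contact diffeomorphism group loses derivatives, so it is not a $C^1$ map between Folland-Stein spaces at a fixed regularity, and the subellipticity of $L$ alone does not restore this; managing this loss (via a Nash-Moser scheme or a carefully structured iteration) is where the bulk of the work in the Cheng-Lee theorem and its modification in \cite{CE2020-deformations-and-embeddings} lies, and your proposal elides it entirely.
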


We refer to \cref{thm:slice-thm} as the \emph{slice theorem} since, up to the action of $PSU(2,1)$, it gives a local transverse slice for the space of CR structures on $(S^3,H)$ under the action of the contact diffeomorphism group; \cref{thm:slice-thm} is a modified version of the Cheng-Lee slice theorem \cite{ChengLee1995} that enables one to easily identify the embeddable and nonembeddable structures \cite{CE2020-deformations-and-embeddings}. (The qualification ``up to the action of $PSU(2,1)$'' can be removed if one formulates the result as a local slice theorem for the marked CR structures on $(S^3,H)$ \cite{ChengLee1995,CE2020-deformations-and-embeddings}, but this will not needed here.)  Since the obstruction flatness equation $\tilde{\mathcal{O}}=0$ is diffeomorphism invariant, there is no loss of generality in considering the equation only for the CR structures corresponding to deformation tensors $\vf \in \mathfrak{D}'_{BE} \oplus \mathfrak{D}_0^{\perp}$.

\section{The Linear Theory via Spherical Harmonics}\label{sec:linear-theory}

We start by recording some basic properties of the vector fields $T$, $Z_1$ and $Z_{\oneb}$ when acting on functions.  From \cref{eqn:T-on-S3} we can easily see that $T$ preserves the spherical harmonic spaces $H_{p,q}$ and acts on $H_{p,q}$ by 
\beq
Tu=i(p-q)u
\eeq 
(consistent with the fact that $T$ generates the standard $S^1$ action on $S^3 \subset \mathbb{C}^2$, namely $(z,w)\mapsto (e^{it}z,e^{it}w)$). It is also straightforward to check that $Z_1$ maps $H_{p,q}$ to $H_{p-1,q+1}$ if $p\geq 1$ and acts by zero on $H_{0,q}$. Similarly, $Z_{\oneb}$ maps $H_{p,q}$ to $H_{p+1,q-1}$ if $q\geq 1$ and acts by zero on $H_{p,0}$. Combining these observations it is then easy to check that $Z_1$ maps $H_{p,q}$ to $H_{p-1,q+1}$ isomorphically (with inverse $\frac{1}{p(q+1)}Z_{\oneb}$) when $p\geq 1$, and $Z_{\oneb}$ maps $H_{p,q}$ to $H_{p+1,q-1}$ isomorphically (with inverse $\frac{1}{q(p+1)}Z_1$) when $q\geq 1$.

\begin{remark} \label{rem:Foland-Stein-norms-via-spherical-harmonics}
From the above it follows that the sublaplacian $\Delta_b = Z_1 Z_{\oneb} + Z_{\oneb}Z_1$ acts on each $H_{p,q}$ by $2pq+p+q$. Note that the Folland-Stein Sobolev $s$-norm $\norm u \norm_s$ on $H_{FS}^s$ is equivalent to the norm
\beq
 \norm (1+ \Delta_b)^{s/2} u\norm_{L^2} = \left(\sum_{p,q} (1+p+q+2pq)^{s} \norm u_{p,q}\norm_{L^2}^2 \right)^{1/2}
\eeq
where $u= \sum_{p,q} u_{p,q}$ \cite{FollandStein1974,JerisonLee1987}. We will freely make use of this observation in the following.
\end{remark} 
We will be computing with the pseudohermitian calculus connected with the standard admissible coframe for the standard CR structure on $S^3$, and hence we need to record how $\nabla_0$, $\nabla_1$, and $\nabla_{\oneb}$ act on $\vf=\vf_1{}^{\oneb}$. Since $\omega_1{}^{1} = -2i  \theta$ and $\omega_{\oneb}{}^{\oneb} = 2i  \theta$ we have that $\nabla_1$ is always interchangeable with $Z_1$ and $\nabla_{\oneb}$ is always interchangeable with $Z_{\oneb}$; on the other hand, $\nabla_0$ acts by $T+4i$ on $\vf=\vf_1{}^{\oneb}$ and hence by $i(p-q + 4)$ on the $H_{p,q}$ component of $\vf$, which we record as:
\beq\label{eqn:nabla0-on-varphi}
\nabla_0 \vf_{p,q} = i(p-q + 4)\vf_{p,q}.
\eeq
%Similarly, $\nabla_0$ acts by $i(p-q - 4)$ on the $H_{p,q}$ component of $\overline{\vf}=\vf_{\oneb}{}^{1}$, which we record as:
%\beq
%\nabla_0 \overline{\vf}_{p,q} = i(p-q - 4)\overline{\vf}_{p,q}.
%\eeq

From these observations combined with \cref{eqn:lin-Q} it is easy to see that the linearization $D\mathcal{Q}$ of the Cartan umbilical tensor at $\vf=0$ maps $\mathfrak{D}_{BE}'$ into $\mathfrak{D}_0$, $\mathfrak{D}_0^{\perp}$ into $\mathfrak{D}_0^{\perp}$, and is injective when restricted to the slice $\mathfrak{D}_{BE}'\oplus \mathfrak{D}_0^{\perp}$. To see the first of these claims, note that if $\vf_1{}^{\oneb} \in \mathfrak{D}_{BE}'$ then $\frac{1}{6} \vf_1{}^{\oneb}{}_{,}{}^{11}{}_{11}  - \vf_1{}^{\oneb}{}_{,00} - \frac{2i}{3}\vf_1{}^{\oneb}{}_{,0}{}^1{}_1 + \frac{i}{2}R\vf_1{}^{\oneb}{}_{,0} \in \mathfrak{D}_{BE} \subset \mathfrak{D}_0$ and $\frac{1}{6} \vf_{\oneb}{}^1{}_{,}{}^{\oneb\oneb}{}_{11} = (Z_1)^2(\frac{1}{6} \vf_{\oneb}{}^1{}_{,}{}^{\oneb\oneb}) \in \mathfrak{D}_0 = \mathrm{im}(Z_1)^2$. That $D\mathcal{Q}$ maps $\mathfrak{D}_0^{\perp}$ into $\mathfrak{D}_0^{\perp}$ follows by noting that if $\vf_1{}^{\oneb} \in \mathfrak{D}_0^{\perp}$ then $\vf_1{}^{\oneb}{}_{,}{}^{11}=0$ (so that $\vf_1{}^{\oneb}{}_{,}{}^{11}{}_{11}=0$ and $\vf_{\oneb}{}^1{}_{,}{}^{\oneb\oneb}{}_{11}=0$) and that the operators $\nabla_0$ and $\nabla_1\nabla^1$ preserve the spherical harmonic spaces $H_{p,q}$. The injectivity of $D\mathcal{Q}$ when restricted to the slice $\mathfrak{D}_{BE}'\oplus \mathfrak{D}_0^{\perp}$ follows by general properties of the CR deformation complex of the standard CR sphere (see, e.g., \cite{CE2018-obstruction-flatI}), but since we will need the computations later we prove this directly below. To establish this we introduce the $L^2$ orthogonal projection $\mathcal{P}_1 : \mathfrak{D}_{BE}'\oplus \mathfrak{D}_0^{\perp} \to \mathfrak{D}_{BE}'$ and show that the maps $\mathcal{P}_1 D\mathcal{Q}: \mathfrak{D}_{BE}' \to \mathfrak{D}_{BE}'$ and $D\mathcal{Q}:\mathfrak{D}_0^{\perp}\to \mathfrak{D}_0^{\perp}$ are both injective. We record how these maps act in the following two lemmas:
\begin{lemma} \label{lem:DQ-on-D0perp}
If $\vf_1{}^{\oneb} \in \mathfrak{D}_0{}^\perp$, then $D\mathcal{Q}$ acts via multiplication by
\begin{multline}
 (p-q+4)^2 + \frac{2}{3}(p+1)q(p-q+4) - (p-q+4)  \\ = \left(p-q+4\right)\left(p-q+3 + \frac{2}{3}q(p+1)\right)
\end{multline}
on the $H_{p,q}$ component of $\vf_1{}^{\oneb}$ ($q=0,1$). That is, $D\mathcal{Q}$ acts on $H_{p,0}$ by $(p+4)(p+3)$ and on $H_{p,1}$ by $\frac{1}{3}(p+3)(5p+8)$. In particular, $D\mathcal{Q}:\mathfrak{D}_0^{\perp}\to \mathfrak{D}_0^{\perp}$ is injective.
\end{lemma}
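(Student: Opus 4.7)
The plan is to insert a deformation tensor $\vf \in \mathfrak{D}_0^{\perp}$ into the linearization formula \cref{eqn:lin-Q} and diagonalize the result in spherical harmonics using the action of the basic operators on $H_{p,q}$ recorded at the beginning of \cref{sec:linear-theory}.

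First I would verify that for $\vf \in \mathfrak{D}_0^{\perp}$ the two fourth-order terms in \cref{eqn:lin-Q} drop out. Since $\vf \in \mathfrak{D}_0^{\perp}$ has spherical harmonic components only in $H_{p,0}$ and $H_{p,1}$, two applications of $Z_{\oneb}$ annihilate $\vf_1{}^{\oneb}$: on $H_{p,0}$ the first $Z_{\oneb}$ is already zero, while on $H_{p,1}$ the first $Z_{\oneb}$ lands in $H_{p+1,0}$ and the second is zero there. Hence $\vf_1{}^{\oneb}{}_{,}{}^{11}{}_{11} = 0$. The conjugate $\vf_{\oneb}{}^{1}$ lies in $H_{0,p}\oplus H_{1,p}$, and the same argument with $Z_1$ in place of $Z_{\oneb}$ gives $\vf_{\oneb}{}^1{}_{,}{}^{\oneb\oneb}{}_{11} = 0$.

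Next I would compute the three surviving terms $-\vf_{,00}$, $-\tfrac{2i}{3}\vf_{,0}{}^1{}_1$, and $\tfrac{i}{2}R\vf_{,0}$ acting on the $H_{p,q}$ component for $q \in \{0,1\}$. Using $\nabla_0 \vf_{p,q} = i(p-q+4)\vf_{p,q}$ from \cref{eqn:nabla0-on-varphi} gives the first and third terms as $(p-q+4)^2\vf_{p,q}$ and $-(p-q+4)\vf_{p,q}$ (with $R=2$). For the middle term, since $\nabla_1 = Z_1$ and $\nabla^1 = \nabla_{\oneb} = Z_{\oneb}$ act trivially through the (purely $\theta$-valued) connection forms, I need the eigenvalue of $Z_1 Z_{\oneb}$ on $H_{p,q}$ for $q \in \{0,1\}$. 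For $q=0$ this eigenvalue is $0$; for $q \geq 1$ the isomorphism $Z_{\oneb}: H_{p,q} \to H_{p+1,q-1}$ with inverse $\tfrac{1}{q(p+1)}Z_1$ gives $Z_1 Z_{\oneb} = q(p+1)\mathrm{id}$ (which may also be cross-checked against $\Delta_b = 2pq+p+q$). The formula $q(p+1)$ covers both $q=0$ and $q=1$ uniformly, so the middle term contributes $\tfrac{2}{3}(p-q+4)q(p+1)\vf_{p,q}$.

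Summing these three contributions yields exactly the displayed eigenvalue $(p-q+4)^2 + \tfrac{2}{3}(p+1)q(p-q+4) - (p-q+4)$, and extracting the common factor gives $(p-q+4)\bigl(p-q+3 + \tfrac{2}{3}q(p+1)\bigr)$. Specializing to $q=0$ produces $(p+4)(p+3)$ and to $q=1$ produces $\tfrac{1}{3}(p+3)(5p+8)$, after algebraic simplification. Injectivity of $D\mathcal{Q} : \mathfrak{D}_0^\perp \to \mathfrak{D}_0^\perp$ follows immediately, since both eigenvalues are strictly positive for every $p \geq 0$. There is no real obstacle here; the only point requiring care is making sure the covariant-derivative operators are correctly identified with the frame derivatives $Z_1$, $Z_{\oneb}$, $T+4i$, which is guaranteed by the form of $\omega_1{}^1$ in \cref{eqn:sphere-omega-and-A}.
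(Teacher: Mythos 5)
Your proposal is correct and takes essentially the same approach as the paper's (very terse) proof, which simply observes that $\vf_1{}^{\oneb}\in \mathfrak{D}_0^{\perp} = \ker(Z_{\oneb})^2$ kills both fourth-order terms and lets the remaining three terms of \cref{eqn:lin-Q} be read off in spherical harmonics. You have simply filled in all the details, and they check out: the identification $\nabla_1 \nabla^1 = Z_1 Z_{\oneb} = q(p+1)$ on $H_{p,q}$, the use of \cref{eqn:nabla0-on-varphi}, the algebraic simplification at $q=0,1$, and the positivity argument for injectivity are all right.
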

\begin{proof} The result follows immediately from \cref{eqn:lin-Q}, noting that $\vf_1{}^{\oneb}\in \mathfrak{D}_0{}^\perp = \mathrm{ker}(Z_{\oneb})^2$ implies $\vf_1{}^{\oneb}{}_{,}{}^{11}=0$.
\end{proof}
Using the fact that $(Z_1)^2(Z_{\oneb})^2$ preserves the spherical harmonics and acts by
\beq \label{Z1-squared-Z1b-squared}
(p+1)(p+2)(q-1)q
\eeq
on each $H_{p,q}$, for $\mathcal{P}_1 D\mathcal{Q}: \mathfrak{D}_{BE}' \to \mathfrak{D}_{BE}'$ we have:
\begin{lemma} \label{lem:P1DQ-on-D'BE}
If  $\vf_1{}^{\oneb} \in \mathfrak{D}'_{BE}$, then $\mathcal{P}_1 D\mathcal{Q}$ acts via multiplication  by
\beq \label{eqn:DQ-for-q-geq-p-plus-5}
\frac{1}{6}(p+1)(p+2)(q-1)q + (q-p-4)^2 - \frac{2}{3}(p+1)q(q-p-4) + q-p-4 
\eeq
on the $H_{p,q}$ component of component of $\vf_1{}^{\oneb}$ when $q>p+4$ and by
\beq\label{eqn:DQ-for-q-eq-p-plus-4}
\frac{1}{3}(p+1)(p+2)(q-1)q
\eeq
when $q=p+4$. 
\end{lemma}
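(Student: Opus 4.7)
The plan is to evaluate each of the five summands in the formula \cref{eqn:lin-Q} for $D\mathcal{Q}$ on a component $\vf_{p,q}\in H_{p,q}$ of $\vf_1{}^{\oneb}\in \mathfrak{D}'_{BE}$ (so $q\geq p+4$) and then apply the $L^2$ projection $\mathcal{P}_1$ onto $\mathfrak{D}'_{BE}$. In contrast to the proof of \cref{lem:DQ-on-D0perp}, where the constraint $\vf_{,}{}^{11}=0$ eliminated most of the terms, now both $\frac{1}{6}\vf_{,}{}^{11}{}_{11}$ and the ``conjugate'' summand $\frac{1}{6}\vf_{\oneb}{}^1{}_{,}{}^{\oneb\oneb}{}_{11}$ require analysis.

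First I would handle the four summands that depend only on $\vf_1{}^{\oneb}$ (and not on its complex conjugate). Since $\omega_1{}^1$ has no $\theta^1$ or $\theta^{\oneb}$ components on the standard sphere, $\nabla_1 = Z_1$ and $\nabla^1 = \nabla_{\oneb} = Z_{\oneb}$ when acting on $\vf_1{}^{\oneb}$. Combining \cref{eqn:nabla0-on-varphi}, the identity $Z_1 Z_{\oneb} = q(p+1)$ on $H_{p,q}$ (recorded just before \cref{eqn:nabla0-on-varphi}), and \cref{Z1-squared-Z1b-squared}, these four summands act on $\vf_{p,q}$ by the scalars $\frac{1}{6}(p+1)(p+2)(q-1)q$, $(q-p-4)^2$, $-\frac{2}{3}(p+1)q(q-p-4)$, and $q-p-4$ respectively, whose sum is exactly the expression in \cref{eqn:DQ-for-q-geq-p-plus-5}.

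Next I would analyze the remaining summand $\frac{1}{6}\vf_{\oneb}{}^1{}_{,}{}^{\oneb\oneb}{}_{11} = \frac{1}{6}(Z_1)^4\overline{\vf_1{}^{\oneb}}$ (using $\nabla^{\oneb}=\nabla_1=Z_1$ on the sphere). The $(q,p)$-spherical-harmonic piece of $\overline{\vf_1{}^{\oneb}}$ is $\overline{\vf_{p,q}}\in H_{q,p}$, and $(Z_1)^4$ carries this into $H_{q-4,p+4}$. When $q>p+4$ the target indices $(p',q') = (q-4,p+4)$ satisfy $q' < p'+4$, so the image lies outside $\mathfrak{D}'_{BE}$ and is annihilated by $\mathcal{P}_1$; this proves \cref{eqn:DQ-for-q-geq-p-plus-5}.

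On the critical diagonal $q=p+4$, the factor $p-q+4=0$ kills three of the four scalar contributions above, leaving only the $\frac{1}{6}\vf_{,}{}^{11}{}_{11}$ contribution $\frac{1}{6}(p+1)(p+2)(p+3)(p+4)$; meanwhile $\frac{1}{6}(Z_1)^4\overline{\vf_{p,p+4}}$ now lands back in $H_{p,p+4}$ and must be accounted for. To handle it I would invoke the reality condition defining $\mathfrak{D}'_{BE}$, namely $\im\bigl((Z_{\oneb})^2\vf_{p,p+4}\bigr)=0$, which (since $(Z_{\oneb})^2\vf_{p,p+4}\in H_{p+2,p+2}$) says this quantity is real-valued. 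Conjugating gives $(Z_1)^2\overline{\vf_{p,p+4}} = (Z_{\oneb})^2\vf_{p,p+4}$, and applying $(Z_1)^2$ together with \cref{Z1-squared-Z1b-squared} yields $(Z_1)^4\overline{\vf_{p,p+4}} = (p+1)(p+2)(p+3)(p+4)\,\vf_{p,p+4}$. Adding this to the previous contribution produces $\frac{1}{3}(p+1)(p+2)(q-1)q$, matching \cref{eqn:DQ-for-q-eq-p-plus-4}. The main subtlety of the argument is precisely this use of the reality condition, without which $(Z_1)^4\overline{\vf_{p,p+4}}$ would not collapse to a scalar multiple of $\vf_{p,p+4}$ and $\mathcal{P}_1 D\mathcal{Q}$ would fail to act diagonally on the spherical harmonics of $\mathfrak{D}'_{BE}$.
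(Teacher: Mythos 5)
Your proposal is correct and follows essentially the same route as the paper's proof: one evaluates the five summands of \cref{eqn:lin-Q} on $H_{p,q}$, observes that the conjugate term $\frac{1}{6}\vf_{\oneb}{}^1{}_{,}{}^{\oneb\oneb}{}_{11}$ lands in $H_{q-4,p+4}$ and is therefore killed by $\mathcal{P}_1$ unless $q=p+4$, and on the critical diagonal uses the reality condition defining $\mathfrak{D}'_{BE}$ to conclude $\vf_{\oneb}{}^1{}_{,}{}^{\oneb\oneb}{}_{11}=\vf_1{}^{\oneb}{}_{,}{}^{11}{}_{11}$, doubling the fourth-order coefficient. Your write-up supplies slightly more of the bidegree bookkeeping than the paper does, but there is no difference of substance.
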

\begin{proof} If  $\vf_1{}^{\oneb} \in \mathfrak{D}'_{BE}$, then $\mathcal{P}_1(\vf_{\oneb}{}^1{}_{,}{}^{\oneb\oneb}{}_{11})$ has vanishing spherical harmonic components, except along the critical diagonal ($p=q+4$).
The expression \cref{eqn:DQ-for-q-geq-p-plus-5} therefore follows immediately from \cref{eqn:lin-Q}.  It remains to consider the case when $\vf_1{}^{\oneb} \in \mathfrak{D}'_{BE}\cap H_{p,p+4}$. In this case the reality condition $\im (\vf_{11,}{}^{11}) =0$ (imposed along the critical diagonal) implies that $\frac{1}{6} \vf_1{}^{\oneb}{}_{,}{}^{11}{}_{11} + \frac{1}{6} \vf_{\oneb}{}^1{}_{,}{}^{\oneb\oneb}{}_{11} = \frac{1}{3}\vf_1{}^{\oneb}{}_{,}{}^{11}{}_{11}$. Since the remaining terms in $D\mathcal{Q}(\vf_1{}^{\oneb})$ are zero in this case, we obtain \cref{eqn:DQ-for-q-eq-p-plus-4}.
\end{proof}
It is easy to check that \cref{lem:P1DQ-on-D'BE} implies that $\mathcal{P}_1 D\mathcal{Q}: \mathfrak{D}_{BE}' \to \mathfrak{D}_{BE}'$ is injective. For later use we record the following stronger result, which compares the action of $\mathcal{P}_1 D\mathcal{Q}$ on $\mathfrak{D}_{BE}'$ with the action of the square of the sublaplacian (cf.\ \cref{rem:Foland-Stein-norms-via-spherical-harmonics}):
\begin{lemma}\label{lem:P1DQ-vs-sublaplacian-squared}
There exist positive constants $C_1, C_2$ such that, for $p\geq 0$ and $q>p+4$
\beq \label{eqn:DQ-on-D'BE-bound-1}
C_1 \leq \frac{\frac{1}{6}(p+1)(p+2)(q-1)q + (q-p-4)^2 - \frac{2}{3}(p+1)q(q-p-4) + q-p-4}{(1 + p + q + 2pq)^2} \leq C_2
\eeq
and for $p\geq 0$ and $q=p+4$
\beq \label{eqn:DQ-on-D'BE-bound-2}
C_1 \leq \frac{\frac{1}{3}(p+1)(p+2)(q-1)q}{(1 + p + q + 2pq)^2} \leq C_2.
\eeq
\end{lemma}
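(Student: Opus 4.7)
My plan is to reduce both inequalities to polynomial two-sided bounds on the numerators by $(1+p)^2(1+q)^2$, exploiting the elementary sandwich $(1+p)(1+q)\leq 1+p+q+2pq\leq 2(1+p)(1+q)$, valid for $p,q\geq 0$, which shows that the denominator $D = (1+p+q+2pq)^2$ is itself sandwiched between $(1+p)^2(1+q)^2$ and $4(1+p)^2(1+q)^2$. This reduces each bound to a concrete polynomial inequality.

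The diagonal case $q=p+4$ is essentially by inspection: the numerator $\tfrac{1}{3}(p+1)(p+2)(p+3)(p+4)$ is a product of four consecutive factors each comparable to $1+p$, and since $q=p+4$ forces $1+p\leq 1+q\leq 5(1+p)$, the numerator is clearly comparable to $(1+p)^2(1+q)^2$. (One can even write the ratio as a rational function of $y:=p^2+5p$ that is monotone decreasing on $y\geq 0$, giving the explicit range $(1/12,\,8/25]$ for $N/D$.)

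The substantive case is $q>p+4$. Here I would substitute $k:=q-p-4\geq 1$ and complete the square in $k$ to rewrite
\[
6N \;=\; \tfrac{(p+1)q}{3}\bigl[q(p+4)-3(p+2)\bigr] \;+\; 6\bigl(k - \tfrac{(p+1)q}{3}\bigr)^2 \;+\; 6k,
\]
where $N$ denotes the numerator in \cref{eqn:DQ-on-D'BE-bound-1}. The last two summands are manifestly nonnegative. For the bracket, the constraint $q\geq p+5$ gives $5(p+2)\leq (p+5)(p+4)\leq q(p+4)$, hence $3(p+2)\leq \tfrac{3}{5}q(p+4)$ and so $q(p+4)-3(p+2)\geq \tfrac{2}{5}q(p+4)$. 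This yields $6N \geq \tfrac{2}{15}(p+1)(p+4)q^2 \geq \tfrac{1}{30}(1+p)^2(1+q)^2$, which after dividing by $D$ gives the required lower bound. The upper bound I would get by a term-by-term estimate of the original expression $6N = (p+1)(p+2)(q-1)q - 4(p+1)qk + 6k^2 + 6k$, dropping the negative middle term and using $k\leq q$ together with $(p+2)\leq 2(p+1)$.

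The only point that calls for a moment's care is the lower bound in the off-diagonal case: a naive termwise estimate fails because the middle term $-4(p+1)qk$ can be of the same order of magnitude as the leading positive term $(p+1)(p+2)(q-1)q$, which is precisely why the completion of squares (or, equivalently, exploiting the quadratic-in-$k$ structure) is essential. Once that is done, combining the two cases and choosing, say, $C_1 = 1/720$ and $C_2 = 3$ works for both inequalities.
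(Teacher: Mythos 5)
Your proof is correct, and it takes a genuinely different route from the one in the paper. You first sandwich the denominator between $(1+p)^2(1+q)^2$ and $4(1+p)^2(1+q)^2$ and reduce everything to comparison with $(1+p)^2(1+q)^2$, and for the hard lower bound on the off-diagonal you substitute $k=q-p-4$ and complete the square in $k$, using the constraint $q\geq p+5$ to show the residual bracket $q(p+4)-3(p+2)$ is bounded below by $\tfrac{2}{5}q(p+4)$. The paper's proof of the same lower bound instead considers $R(p,q)=S(p,q)-\tfrac{1}{48}T(p,q)$ (numerator minus $\tfrac{1}{48}$ times denominator), organizes it as a quadratic $a(p)q^2+b(p)q+c(p)$ in $q$, and asserts that $a(p)>0$ and that the quadratic is positive for all $p\geq 0$ (hence $R>0$ even without restricting to $q>p+4$). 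Your route has the advantage of making the handling of the problematic cross term $-4(p+1)qk$ completely explicit — this is precisely the term you correctly identify as the reason a naive termwise bound fails — and of producing explicit constants, whereas the paper leaves the positivity of the quadratic in $q$ as an unverified ``easy to check'' (which amounts to a discriminant inequality among quartics in $p$). The paper's version proves the slightly stronger unrestricted positivity of $R(p,q)$ and yields a larger admissible $C_1$ ($1/48$ versus your $1/720$), but that is immaterial to the lemma. Your upper bound via dropping the negative middle term and using $k\leq q$, $p+2\leq 2(p+1)$ together with the sandwich on the denominator is also cleaner than the paper's rather terse degree count (which as written appears to contain a slip, since both numerator and denominator are degree $4$ in $(p,q)$).
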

\begin{proof} This is a basic exercise in multivariable calculus. The existence of $C_2$ follows from the fact that the denominator in both expressions is larger than $p^2+q^2$, and the numerator is a degree 2 polynomial in $p$ and $q$. The existence of $C_1>0$ small enough such that \cref{eqn:DQ-on-D'BE-bound-2} holds is obvious once we set $q=p+4$ (one obtains a decreasing function for $p\geq 0$ which tends to $1/12$, so any $C_1\leq 1/12$ will do). It is also an easy exercise to see that there exists $C_1>0$ small enough such that \cref{eqn:DQ-on-D'BE-bound-1} holds. One way to do this is to write $S(p,q)$ for the numerator (given by \cref{eqn:DQ-for-q-geq-p-plus-5}) and $T(p,q) = (1 + p + q + 2pq)^2$, and then consider $R(p,q) = S(p,q)-\frac{1}{48}T(p,q)$. Writing $R(p,q) = a(p)q^2 + b(p)q + c(p)$ for quadratic polynomials $a(p)$, $b(p)$, $c(p)$ we have that $a(p)>0$ for all $p$, and it is easy to check that $q\to R(p,q)$ is a positive quadratic for all $p\geq 0$. In particular, for all $p\geq 0$ and $q>p+4$ (indeed, for any $q$) we have $R(p,q)>0$ and hence $S(p,q)/T(p,q) > 1/48$, as required.
\end{proof}
\cref{lem:P1DQ-vs-sublaplacian-squared} shows that, in a precise sense, $\mathcal{P}_1 D\mathcal{Q}:\mathfrak{D}'_{BE} \to \mathfrak{D}'_{BE}$ behaves like $(1+\Delta_b)^2$. In particular, as a map from $\mathfrak{D}'_{BE} \to \mathfrak{D}'_{BE}$ the operator $\mathcal{P}_1 D\mathcal{Q}$ is an injective fourth order operator whose inverse gains $4$ derivatives in Folland-Stein spaces, and preserves each $H_{p,q}$ for $q\geq p+4$. We will also later need the corresponding result for $(Z_1)^2(Z_{\oneb})^2$, which acts on each $H_{p,q}$ by \cref{Z1-squared-Z1b-squared}. It is straightforward to check that: 
\begin{lemma}\label{lem:Z1-squared-Zoneb-squared-in-spherical-harmonics}
There exist positive constants $C_1, C_2$ such that, for $p\geq 0$ and $q\geq p+4$
\beq 
C_1 \leq \frac{(p+1)(p+2)(q-1)q}{(1 + p + q + 2pq)^2} \leq C_2.
\eeq
\end{lemma}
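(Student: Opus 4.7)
The plan is to verify both inequalities by a straightforward comparison of the numerator $N(p,q) = (p+1)(p+2)(q-1)q$ and the denominator $D(p,q) = (1+p+q+2pq)^2$ on the index region $R = \{(p,q)\in\mathbb{Z}^2_{\geq 0}: q \geq p+4\}$. This is essentially the same style of argument as in \cref{lem:P1DQ-vs-sublaplacian-squared}, but it should be considerably simpler since both $N$ and $D$ have degree $4$ in $(p,q)$ and no cancellation arises from mixed-sign terms; the leading behavior is already transparent.

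For the upper bound I would split into two cases. When $p = 0$ the ratio becomes $2q(q-1)/(q+1)^2$, which is easily seen to be bounded (tending to $2$ from below as $q\to\infty$). When $p \geq 1$, I would use the elementary estimates $(p+1)(p+2) \leq 3(p+1)^2$ and $(q-1)q \leq q^2$, while bounding the denominator below by $(2pq)^2 = 4p^2 q^2$. Combining these gives a uniform upper bound on $N/D$, so some $C_2$ (e.g.\ $C_2 = 2$) works.

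For the lower bound, the key observation is that $q \geq p+4 \geq 4$ throughout $R$, so $q-1 \geq 3q/4$ and hence $(q-1)q \geq (3/4)q^2$. In the case $p = 0$ we get $N/D \geq 2q(q-1)/(q+1)^2$, which attains its minimum at $q=4$, giving $24/25$. In the case $p \geq 1$, I would use $1 \leq pq$, $p \leq pq$, $q \leq pq$ to conclude $1+p+q+2pq \leq 5pq$, so $D \leq 25 p^2 q^2$, while $N \geq (p+1)^2 \cdot (3/4)q^2 \geq (3/4) p^2 q^2$. This yields $N/D \geq 3/100$ throughout, and taking $C_1 = \min(3/100, 24/25) = 3/100$ finishes the argument.

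There is no real obstacle here: the entire proof is a uniform polynomial estimate, and the splitting $p=0$ vs.\ $p \geq 1$ is forced only because the lower bound $1+p+q+2pq \geq 2pq$ degenerates when $p = 0$. Since the result is used in \cref{sec:partial-solvability} exactly to identify $(Z_1)^2(Z_{\bar 1})^2$ as an operator behaving like $(1+\Delta_b)^2$ on $\mathfrak{D}_{BE}$, the explicit values of $C_1,C_2$ are immaterial; only their existence (with $C_1 > 0$) is needed.
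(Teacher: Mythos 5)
Your proof is correct, and it supplies exactly the kind of elementary verification that the paper omits (the lemma is introduced only with ``It is straightforward to check that\ldots'' and no proof is given). One small overclaim: in the case $p \geq 1$ your chain of estimates yields $N/D \leq 3(p+1)^2/(4p^2)$, which at $p=1$ equals $3$, so the parenthetical assertion that $C_2 = 2$ works is not established by the argument as written (your argument gives $C_2 = 3$, say); but as you correctly observe, only the existence of finite $C_2$ and positive $C_1$ is used downstream, so this is immaterial. The lower-bound argument, including the observation $q \geq p+4 \geq 4 \Rightarrow q-1 \geq \tfrac{3}{4}q$ and the split into $p=0$ and $p\geq 1$, is clean and correct.
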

We are now ready to discuss the linearized obstruction operator $D\mathcal{O} = (Z_{\oneb})^2D\mathcal{Q}$. As an easy consequence of the above discussion we have:
\begin{proposition} \label{prop:kernel-of-DO}
The kernel of the linearized obstruction operator $D\mathcal{O}$ restricted to the slice $\mathfrak{D}'_{BE}\oplus \mathfrak{D}_0^{\perp}$ is given by $\mathfrak{D}_0^{\perp}$.
\end{proposition}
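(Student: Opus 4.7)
Since the slice $\mathfrak{D}'_{BE} \oplus \mathfrak{D}_0^{\perp}$ is a direct sum on which $D\mathcal{O}$ is linear, the plan is to argue in two steps: (i) $\mathfrak{D}_0^{\perp} \subseteq \ker D\mathcal{O}$, and (ii) $D\mathcal{O}$ is injective on $\mathfrak{D}'_{BE}$. Combining these, any $\vf = \vf_a + \vf_b$ in the kernel with $\vf_a \in \mathfrak{D}'_{BE}$ and $\vf_b \in \mathfrak{D}_0^{\perp}$ satisfies $D\mathcal{O}(\vf_a) = -D\mathcal{O}(\vf_b) = 0$ by (i), hence $\vf_a = 0$ by (ii), and so $\vf = \vf_b \in \mathfrak{D}_0^{\perp}$.

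For (i), I would first record the identity $\mathfrak{D}_0^{\perp} = \ker (Z_{\oneb})^2$, which is immediate from the action of $Z_{\oneb}$ on spherical harmonics: $Z_{\oneb}$ sends $H_{p,q}$ to $H_{p+1,q-1}$ for $q\geq 1$ and to zero otherwise, so $(Z_{\oneb})^2$ annihilates precisely the functions supported on $q\in\{0,1\}$. The text has already shown $D\mathcal{Q}(\mathfrak{D}_0^{\perp}) \subseteq \mathfrak{D}_0^{\perp}$, so applying $(Z_{\oneb})^2$ to the image yields zero.

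For (ii), suppose $\vf \in \mathfrak{D}'_{BE}$ with $D\mathcal{O}(\vf) = (Z_{\oneb})^2 D\mathcal{Q}(\vf) = 0$. The paper has established that $D\mathcal{Q}(\mathfrak{D}'_{BE}) \subseteq \mathfrak{D}_0$, while by the identification in (i) we have $D\mathcal{Q}(\vf) \in \ker(Z_{\oneb})^2 = \mathfrak{D}_0^{\perp}$. Since $\mathfrak{D}_0 \cap \mathfrak{D}_0^{\perp} = \{0\}$ (as their defining conditions on spherical harmonic components are complementary), this forces $D\mathcal{Q}(\vf) = 0$. Applying the orthogonal projection $\mathcal{P}_1$, we get $\mathcal{P}_1 D\mathcal{Q}(\vf) = 0$; but the eigenvalues in \cref{eqn:DQ-for-q-geq-p-plus-5,eqn:DQ-for-q-eq-p-plus-4} of \cref{lem:P1DQ-on-D'BE} are strictly positive for all $p\geq 0$, $q\geq p+4$, so $\mathcal{P}_1 D\mathcal{Q}$ is injective on $\mathfrak{D}'_{BE}$, yielding $\vf = 0$.

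There is no genuine obstacle here: the proposition is essentially a corollary of the mapping properties of $D\mathcal{Q}$ already established in this section, combined with the elementary kernel description $\mathfrak{D}_0^{\perp} = \ker(Z_{\oneb})^2$. All the spectral information needed is already packaged in \cref{lem:DQ-on-D0perp,lem:P1DQ-on-D'BE}, so the argument reduces to bookkeeping with the spherical harmonic decomposition.
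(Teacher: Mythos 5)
Your argument is correct and follows essentially the same route as the paper's proof: both use the factorization $D\mathcal{O}=(Z_{\oneb})^2 D\mathcal{Q}$, the mapping properties $D\mathcal{Q}(\mathfrak{D}'_{BE})\subseteq\mathfrak{D}_0$ and $D\mathcal{Q}(\mathfrak{D}_0^{\perp})\subseteq\mathfrak{D}_0^{\perp}=\ker(Z_{\oneb})^2$, and the injectivity of $D\mathcal{Q}$ on the slice established via \cref{lem:DQ-on-D0perp,lem:P1DQ-on-D'BE}. Your version just spells out the bookkeeping in the $\mathfrak{D}'_{BE}\oplus\mathfrak{D}_0^{\perp}$ decomposition that the paper leaves implicit.
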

\begin{proof}
Since $D\mathcal{O} = (Z_{\oneb})^2D\mathcal{Q}$, this follows immediately from the injectivity of $D\mathcal{Q}$ and the fact that $D\mathcal{Q}$ maps $\mathfrak{D}'_{BE}$ into $\mathfrak{D}_0$ and maps $\mathfrak{D}_0^{\perp}$ into $\mathfrak{D}_0^{\perp} = \mathrm{ker}(Z_{\oneb})^2$.
\end{proof}
Together with \cref{thm:slice-thm}, this establishes \cref{thm:infinitesimal-solutions} from the introduction.

\begin{remark}
As an aside we note that the space $\mathfrak{D}_0^{\perp}$ is also the tangent space (within the slice) at the standard CR sphere to the space of CR structures on $S^3$ that are fillable by self-dual asymptotically complex hyperbolic Einstein metrics on the real $4$-ball \cite{Biquard2005}.
\end{remark}

It is also straightforward to determine the image of the map $D\mathcal{O}: \mathfrak{D}'_{BE}\oplus \mathfrak{D}_0^{\perp} \to C^{\infty}(S^3,\mathbb{R})$ in terms of spherical harmonics. 
\begin{proposition} The image of the map $D\mathcal{O}: \mathfrak{D}'_{BE}\oplus \mathfrak{D}_0^{\perp} \to C^{\infty}(S^3,\mathbb{R})$ is the space of functions $f \in C^{\infty}(S^3,\mathbb{R})$ with spherical harmonic decomposition of the form $\sum_{p,q\geq 2} f_{p,q}$.
\end{proposition}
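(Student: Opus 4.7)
My approach is to factor $D\mathcal{O} = (Z_{\oneb})^2 D\mathcal{Q}$ and reduce the question to one about spherical harmonics, exploiting the explicit action of $D\mathcal{Q}$ from \cref{lem:DQ-on-D0perp,lem:P1DQ-on-D'BE} together with the isomorphism property of $(Z_{\oneb})^2 : H_{p,q} \to H_{p+2,q-2}$ (for $q \geq 2$) noted at the start of \cref{sec:linear-theory}. The first step is to dispose of the $\mathfrak{D}_0^{\perp}$ summand: by \cref{lem:DQ-on-D0perp}, $D\mathcal{Q}$ preserves $\mathfrak{D}_0^{\perp}$, whose elements are supported in $H_{p,q}$ with $q\in\{0,1\}$, and $(Z_{\oneb})^2$ annihilates all such harmonics. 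Hence $D\mathcal{O}|_{\mathfrak{D}_0^{\perp}} = 0$ and the image coincides with $D\mathcal{O}(\mathfrak{D}'_{BE})$.

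For the containment, fix $\vfd_{p,q}\in \mathfrak{D}'_{BE} \cap H_{p,q}$, so $q\geq p+4 \geq 4$. Inspecting \cref{eqn:lin-Q} term by term, $D\mathcal{Q}(\vfd_{p,q})$ has nonzero spherical-harmonic content only in $H_{p,q}$ (the ``$\mathcal{P}_1 D\mathcal{Q}$''-part, which by \cref{lem:P1DQ-on-D'BE} is a strictly positive scalar multiple of $\vfd_{p,q}$) and in $H_{q-4,p+4}$ (from the term $\tfrac{1}{6}\vfd_{\oneb}{}^1{}_{,}{}^{\oneb\oneb}{}_{11}$, since $\overline{\vfd_{p,q}}\in H_{q,p}$ and four $Z_1$-derivatives shift it into $H_{q-4,p+4}$). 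Applying $(Z_{\oneb})^2$ moves these contributions into $H_{p+2,q-2}$ and $H_{q-2,p+2}$ respectively, both of which have both indices $\geq 2$ since $q\geq p+4$; reality of the real-valued output further enforces the conjugation $\overline{f_{p',q'}} = f_{q',p'}$ on components. Hence $D\mathcal{O}(\mathfrak{D}'_{BE})\subset \bigoplus_{p',q'\geq 2}H_{p',q'}$.

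For the reverse inclusion, let $f = \sum_{p',q'\geq 2} f_{p',q'}$ be a real-valued target. By the conjugation symmetry it suffices to match each $f_{p',q'}$ with $q'\geq p'\geq 2$ independently; the conjugate component at $(q',p')$ is then automatic. Setting $(p,q) = (p'-2, q'+2)$ gives $p\geq 0$ and $q-p-4 = q'-p'\geq 0$, so $\vfd\in H_{p,q}\subset \mathfrak{D}'_{BE}$, and the main contribution $\mu_{p,q}(Z_{\oneb})^2\vfd$ lies in $H_{p',q'}$ with $\mu_{p,q}>0$; since $(Z_{\oneb})^2 : H_{p,q}\to H_{p',q'}$ is a linear isomorphism, this contribution alone surjects onto $H_{p',q'}$, while the ``other'' contribution to $H_{q',p'}$ does not interfere (when $q'>p'$) and is forced to be $\overline{f_{p',q'}}$ by reality. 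Summing over harmonics produces the desired preimage. The main subtlety here lies on the critical diagonal $q'=p'$ (equivalently $q=p+4$), where both contributions overlap in $H_{p',p'}$; one must check that the reality condition $\im((Z_{\oneb})^2\vfd_{p,p+4})=0$ built into $\mathfrak{D}'_{BE}$ precisely halves the real input dimension to match that of the real-valued target $H_{p',p'}$, so that the nonvanishing of the multiplier $\mu_{p,p+4}$ still forces surjectivity onto this diagonal slice, which completes the proof.
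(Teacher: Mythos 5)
Your proposal is correct and follows essentially the same route as the paper: vanishing of $D\mathcal{O}$ on $\mathfrak{D}_0^{\perp}$, reality of the output to reduce to components with $q'\geq p'$, and then the fact that $\mathcal{P}_1 D\mathcal{Q}$ acts on $\mathfrak{D}'_{BE}$ by a positive scalar on each $H_{p,q}$ while $(Z_{\oneb})^2$ carries $H_{p,q}$ isomorphically onto $H_{p+2,q-2}$. You work out more explicitly than the paper does how the second term $\tfrac16\vfd_{\oneb}{}^1{}_{,}{}^{\oneb\oneb}{}_{11}$ lands in the conjugate block $H_{q-2,p+2}$ and how the reality constraint on the critical diagonal $q=p+4$ matches the halved dimension of the real-valued diagonal target, but these are details the paper leaves to the phrase ``the result then easily follows,'' not a different method.
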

\begin{proof}
Noting that $D\mathcal{O}$ vanishes on $\mathfrak{D}_0^{\perp}$ and that the image of $D\mathcal{O}$ consists of real functions it is enough to consider the $H_{p,q}$-components of $D\mathcal{O}(\vf)$ for $\vf\in \mathfrak{D}'_{BE}$ with $q\geq p$. The result then easily follows from the above discussion by considering how $(Z_{\oneb})^2 \mathcal{P}_1 D\mathcal{Q}$ acts on $\mathfrak{D}'_{BE}$ ($\mathcal{P}_1 D\mathcal{Q}$ acts injectively and by scalar multiplication on each $H_{p,q}$ component and then $(Z_{\oneb})^2$ maps each $H_{p,q}$ isomorphically to $H_{p+2,q-2}$). Another way to see this result is by noting that $\mathcal{DO}$ is zero on the space $i(Z_1)^2 C^{\infty}(S^3,\mathbb{R})$ of trivial infinitesimal deformation tensors \cite{CE2020-deformations-and-embeddings}, so we can restrict to any slice (complement to the space of trivial deformations) to compute the image of $\mathcal{DO}$; using the Cheng-Lee slice \cite{ChengLee1995} given by those deformation tensors $\vf=\vf_{11}$ such that $\mathrm{Im}\, (\nabla^1\nabla^1\vf_{11}) =0$ the first two terms in the right hand side of \cref{eqn:lin-Q} drop out and the remaining terms preserve each $H_{p,q}$; the action of $D\mathcal{O}$ is obtained by composing \cref{eqn:lin-Q} with $(Z_{\oneb})^2$ and the result follows easily.
\end{proof}

Note that the cokernel $H^2_{\mathcal{O}}$ of $D\mathcal{O}: \mathfrak{D}'_{BE}\oplus \mathfrak{D}_0^{\perp} \to C^{\infty}(S^3,\mathbb{R})$ is nontrivial, and can be identified with the space of functions $f \in C^{\infty}(S^3,\mathbb{R})$  with spherical harmonic decomposition of the form $f=\sum_{p,q} f_{p,q}$ with $f_{p,q}=0$ if $p,q\geq 2$. (The space $H^2_{\mathcal{O}}$ can also be described as the space of all functions of the form $f=\mathrm{Re}(g_0 + g_1 \bar z + g_2 \bar w)$ where $(z,w)$ are the coordinates on $\mathbb{C}^2$ and the $g_j$ are arbitrary smooth CR functions on the unit sphere $S^3\subset \mathbb{C}^2$.) Hence, while on the standard CR $3$-sphere there is an infinite dimensional family $H^1_{\mathcal{O}} = \mathfrak{D}_0^{\perp}$ of solutions to the linearized equation corresponding to $\mathcal{O}\equiv 0$, the problem of obtaining solutions to $\mathcal{O}\equiv 0$ via deformation theory is obstructed by the presence of the (also infinite dimensional) cokernel $H^2_{\mathcal{O}}$ of the linearized operator $D\mathcal{O}: \mathfrak{D}'_{BE}\oplus \mathfrak{D}_0^{\perp} \to C^{\infty}(S^3,\mathbb{R})$.

\begin{remark}\label{rem:H1-H2}
The reason we use the notation $H^1_{\mathcal{O}}$ and $H^2_{\mathcal{O}}$ is that these spaces may thought of as the first and second cohomologies of the deformation complex
\beq
0\rightarrow C^{\infty}(S^3,\mathbb{R}) \rightarrow \mathfrak{D} \rightarrow C^{\infty}(S^3,\mathbb{R})\rightarrow 0,
\eeq
where $\mathfrak{D} \cong C^{\infty}(S^3,\mathbb{C})$ denotes the space of infinitesimal deformation tensors on the standard CR sphere, the second arrow is the linearized action of the contact diffeomorphism group ($f\mapsto iZ_1Z_1 f$) and the third arrow is the linearized obstruction operator $D\mathcal{O}$.
\end{remark}

\section{Partial Solvability}\label{sec:partial-solvability}%Existence of Kuranishi Map

In this section we use results of the previous section combined with a standard deformation theory argument to show that the linearized solutions to the obstruction flatness equation on the standard CR sphere do integrate to solutions of the nonlinear equation $\mathcal{O} \equiv 0 \mod H^2_{\mathcal{O}}$, i.e. $\mathcal{O}_{p,q} =0$ for $p,q\geq 2$. More precisely, we shall prove:
\begin{theorem}\label{thm:partial-solvability}
There exist neighborhoods $U$ of $0 \in \mathfrak{D}_0^{\perp}$ and $V$ of $0 \in \mathfrak{D}_{BE}'$ in the $H^6_{FS}$ topology such that for any $\vf_0 \in U$ there is a unique $\psi\in V$ such that the deformation tensor $\vf = \psi + \vf_0$ satisfies $\mathcal{O}(\vf) \equiv 0 \mod H^2_{\mathcal{O}}$.
\end{theorem}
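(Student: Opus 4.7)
The plan is to apply a Banach space implicit function theorem to the map
\[
\Psi : \mathfrak{D}_0^\perp \times \mathfrak{D}'_{BE} \longrightarrow (H^2_{\mathcal{O}})^\perp,
\qquad \Psi(\vf_0,\psi) = \Pi\, \mathcal{O}(\vf_0+\psi),
\]
where $\Pi$ denotes the $L^2$ orthogonal projection onto $(H^2_{\mathcal{O}})^\perp$ and the source spaces are equipped with the $H^6_{FS}$ topology (target, say, with the $L^2$ topology). Clearly $\Psi(0,0)=0$ since $\mathcal{O}$ vanishes on the standard CR sphere, and the partial derivative in $\psi$ at the origin is
\[
(\partial_\psi \Psi)(0,0) \;=\; \Pi \circ D\mathcal{O}\big|_{\mathfrak{D}'_{BE}} \;=\; D\mathcal{O}\big|_{\mathfrak{D}'_{BE}},
\]
the last equality following from the fact (established in \cref{sec:linear-theory}) that the image of $D\mathcal{O}|_{\mathfrak{D}'_{BE}}$ already lies in $(H^2_{\mathcal{O}})^\perp$. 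Once we show that $\Psi$ is $C^1$ near the origin and that $(\partial_\psi \Psi)(0,0)$ is a Banach space isomorphism onto the target, the implicit function theorem yields the desired $\psi = \psi(\vf_0) \in V$.

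Smoothness of $\Psi$ reduces to regularity of $\vf \mapsto \mathcal{O}(\vf)$ as a map $H^6_{FS} \to L^2$, which I would verify from \cref{lemma:O-tilde-form}: the obstruction is $(1-|\vf|^2)^{-9}$ times a polynomial in $\vf$ and its $Z_1,Z_{\oneb}$-derivatives up to order $6$, with at most $6$ total derivatives per term. Since the CR homogeneous dimension of $S^3$ is $4$, $H^s_{FS}$ is a Banach algebra for $s>2$ and embeds into $L^\infty$ for $s>2$; thus in any monomial $D^{a_1}\vf\cdots D^{a_k}\vf$ with $\sum a_i\le 6$ at most one factor can have $a_i>3$, that factor sits in $L^2$, the rest in $L^\infty$, and the product lies in $L^2$. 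The prefactor $(1-|\vf|^2)^{-9}$ is a real-analytic function of $\vf\in L^\infty$ near $0$. Composing, $\mathcal{O}$ is real-analytic $H^6_{FS}\to L^2$ in a neighborhood of $0$, and $\Psi$ is real-analytic as well.

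For the isomorphism property of $(\partial_\psi \Psi)(0,0)$, I would use the explicit spherical harmonic description of $D\mathcal{O} = (Z_{\oneb})^2 D\mathcal{Q}$ on $\mathfrak{D}'_{BE}$. By \cref{lem:P1DQ-on-D'BE}, $D\mathcal{Q}|_{\mathfrak{D}'_{BE}} = \mathcal{P}_1 D\mathcal{Q}|_{\mathfrak{D}'_{BE}}$ acts by scalar multiplication on each component $H_{p,q}\subset\mathfrak{D}'_{BE}$ (that is, $q\ge p+4$), and \cref{lem:P1DQ-vs-sublaplacian-squared} shows these scalars are comparable to $(1+p+q+2pq)^2 \sim \|(1+\Delta_b)^2\|$ on $H_{p,q}$; composing with $(Z_{\oneb})^2:H_{p,q}\to H_{p+2,q-2}$, whose norm is comparable to $1+p+q+2pq$ by \cref{lem:Z1-squared-Zoneb-squared-in-spherical-harmonics}, the composition $D\mathcal{O}|_{\mathfrak{D}'_{BE}}$ is an orthogonal sum of isomorphisms $H_{p,q}\to H_{p+2,q-2}$ with singular values of order $(1+p+q+2pq)^3$. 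Thus $D\mathcal{O}|_{\mathfrak{D}'_{BE}}$ is a bounded bijection $H^6_{FS}\cap\mathfrak{D}'_{BE}\to L^2\cap(H^2_{\mathcal{O}})^\perp$ with bounded inverse gaining exactly $6$ Folland–Stein derivatives (since the target of the composition hits every $(p',q')$ with $p',q'\ge 2$, which is exactly $(H^2_{\mathcal{O}})^\perp$ modulo complex conjugation on real-valued functions).

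Applying the implicit function theorem then produces the unique $\psi=\psi(\vf_0)\in V\subset\mathfrak{D}'_{BE}$ with $\Psi(\vf_0,\psi)=0$ for $\vf_0\in U\subset \mathfrak{D}_0^\perp$ small in $H^6_{FS}$, proving the theorem. The only real bookkeeping point — and the step most prone to error — is confirming that $\Pi \mathcal{O}(\vf)$ really is smooth between the chosen Folland–Stein Sobolev spaces (as opposed to losing regularity in the nonlinear terms), which is why the $s=6$ threshold is dictated both by the order of $D\mathcal{O}$ and by the algebra/embedding properties of $H^s_{FS}$ above the homogeneous critical dimension.
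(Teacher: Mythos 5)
Your approach is essentially the same as the paper's: decompose $\mathcal{O}(\psi+\vf_0)$ into its linear part in $\psi$ (the invertible piece $\mathcal{P}_{\mathfrak{Im}} D\mathcal{O}|_{\mathfrak{D}'_{BE}}$) plus a higher-order nonlinearity, check the nonlinearity is controlled in $H^6_{FS}\to L^2$ using \cref{lemma:O-tilde-form} together with the Folland--Stein embeddings, and close the argument with a fixed-point theorem. The paper writes out the contraction mapping directly (defining $\mathcal{F}_{\vf_0}$, $\mathcal{L}_{\vf_0}$, $\mathcal{N}_{\vf_0}$, and Lemmas~\ref{lem:linearized-obstruction-at-vf0-estimate} and \ref{lem:nonlinear-estimate-for-Kuranishi-argument}); you package the same content as an invocation of the Banach implicit function theorem, which lets you skip the uniform-invertibility estimate for $\mathcal{L}_{\vf_0}$ with $\vf_0\neq 0$ (continuity of the derivative in the IFT replaces \cref{lem:linearized-obstruction-at-vf0-estimate}). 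Your analysis of why $\mathcal{O}$ is real-analytic $H^6_{FS}\to L^2$ and why $D\mathcal{O}|_{\mathfrak{D}'_{BE}}$ gains exactly six Folland--Stein derivatives onto $\mathfrak{Im}$ is correct and matches \cref{lem:P1DQ-on-D'BE,lem:P1DQ-vs-sublaplacian-squared,lem:Z1-squared-Zoneb-squared-in-spherical-harmonics}.

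There is, however, one genuine gap. The implicit function theorem is applied between the \emph{Banach completions} of $\mathfrak{D}_0^{\perp}$ and $\mathfrak{D}'_{BE}$ in $H^6_{FS}$ (your stated source spaces are not Banach spaces), so it only produces $\psi$ in the $H^6_{FS}$-closure $\mathfrak{B}_1$ of $\mathfrak{D}'_{BE}$, not in $\mathfrak{D}'_{BE}$ itself. The theorem asserts $\psi\in V\subset\mathfrak{D}'_{BE}$, a space of $C^\infty$ tensors. You therefore still need the regularity bootstrap: rewrite the fixed-point equation as $\psi + \mathcal{L}_{\vf_0}^{-1}\mathcal{N}_{\vf_0}(\psi) = -\mathcal{L}_{\vf_0}^{-1}\mathcal{F}_{\vf_0}(0)$ and observe that $\mathrm{Id} + \mathcal{L}_{\vf_0}^{-1}\mathcal{N}_{\vf_0}$ is an elliptic zeroth-order Heisenberg pseudodifferential operator for $\vf_0$ small, so that $\psi$ inherits the smoothness of the right-hand side. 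This is exactly the step the paper adds at the end of its proof, and your write-up should not skip it.
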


Before we prove this theorem we will collect some useful lemmas. In the following we denote the image of $D\mathcal{O}: \mathfrak{D}'_{BE}\oplus \mathfrak{D}_0^{\perp} \to C^{\infty}(S^3,\mathbb{R})$ by $\mathfrak{Im}$, so that $C^{\infty}(S^3,\mathbb{R}) = \mathfrak{Im} \oplus H^2_{\mathcal{O}}$. We have seen that the linearization $D\mathcal{O}$ of the obstruction function at the standard CR sphere restricts to an invertible linear map from $\mathfrak{D}_{BE}' \to \mathfrak{Im}$ whose inverse gains $6$ derivatives in Folland-Stein spaces. For the proof of \cref{thm:partial-solvability} we need a following slightly more general result given in the following lemma.

Identifying the space of (marked) CR structures on $(S^3,H)$ with the slice $\mathfrak{D}_{BE}'\oplus \mathfrak{D}_0^{\perp}$ we let $D_{\vf_0} \mathcal{O}$ denote the linearization of the CR obstruction function at the CR structure corresponding to $\vf_0 \in \mathfrak{D}_{BE}'\oplus \mathfrak{D}_0^{\perp}$. Let $\mathcal{P}_{\mathfrak{Im}}$ denote the $L^2$ orthogonal projection from $C^{\infty}(S^3,\mathbb{R})= \mathfrak{Im} \oplus H^2_{\mathcal{O}}$ to $\mathfrak{Im}$.
\begin{lemma} \label{lem:linearized-obstruction-at-vf0-estimate}
There is a constant $C>0$ such that for all $\vf_0 \in \mathfrak{D}_0^{\perp}$ sufficiently small one has
\beq\label{eqn:DO-at-vf0-estimate}
C\norm \vfd \norm_{6} \leq \norm \mathcal{P}_{\mathfrak{Im}} D_{\vf_0} \mathcal{O} (\vfd) \norm_{L^2} \leq C^{-1}\norm \vfd \norm_{6}
\eeq
for any $\vfd \in \mathfrak{D}_{BE}'$.
\end{lemma}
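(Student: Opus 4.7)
The plan is to argue by perturbation from the base case $\vf_0 = 0$, which reduces to the linear theory developed in \cref{sec:linear-theory}. First I would observe that at $\vf_0 = 0$ the operator $D_0 \mathcal{O} = (Z_{\oneb})^2 D\mathcal{Q}$ maps $\mathfrak{D}_{BE}'$ into $\mathfrak{Im}$ (so $\mathcal{P}_{\mathfrak{Im}}$ acts as the identity on its image), and combining \cref{lem:P1DQ-on-D'BE,lem:P1DQ-vs-sublaplacian-squared,lem:Z1-squared-Zoneb-squared-in-spherical-harmonics} gives
\beq
\norm D_0 \mathcal{O}(\vfd)\norm_{L^2}^2 \; \sim \; \sum_{p,q} (1 + p + q + 2pq)^6 \, \norm \vfd_{p,q}\norm_{L^2}^2 \; = \; \norm \vfd \norm_6^2,
\eeq
since the $\mathcal{P}_1$-component of $D\mathcal{Q}$ acts essentially like $(1+\Delta_b)^2$ on each $H_{p,q}$-piece of $\mathfrak{D}_{BE}'$, while $(Z_{\oneb})^2$ contributes an additional factor comparable to $(1+\Delta_b)$ in $L^2$. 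This establishes \cref{eqn:DO-at-vf0-estimate} at $\vf_0 = 0$ with some constant $C_0 > 0$.

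For the perturbation step, write $D_{\vf_0}\mathcal{O} = D_0 \mathcal{O} + E_{\vf_0}$. By \cref{lemma:O-tilde-form} the obstruction density $\tilde{\mathcal{O}}(\vf)$ is $(1-|\vf|^2)^{-9}$ times a polynomial in $\vf$, $\bar\vf$ and their $Z_1, Z_{\oneb}$-derivatives of order at most $6$. Differentiating this expression in the direction $\vfd$ at $\vf = \vf_0$ and subtracting the linearization at $\vf = 0$ exhibits $E_{\vf_0}$ as a linear differential operator of order at most $6$ in $\vfd$, whose coefficients are smooth functions of $\vf_0$ and its $Z_1, Z_{\oneb}$-derivatives up to order $6$ that vanish identically when $\vf_0 = 0$. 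A routine application of Folland-Stein Sobolev multiplication and embedding estimates (using $H^s_{FS} \hookrightarrow L^{\infty}$ for $s$ larger than the Heisenberg-type homogeneous dimension of $S^3$) then yields
\beq
\norm \mathcal{P}_{\mathfrak{Im}} E_{\vf_0}(\vfd) \norm_{L^2} \leq \epsilon(\vf_0) \norm \vfd \norm_6
\eeq
for some continuous function $\epsilon$ with $\epsilon(0) = 0$, provided $\vf_0$ is small in a sufficiently strong Folland-Stein norm.

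Combining the two steps, for $\vf_0$ small enough that $\epsilon(\vf_0) < C_0/2$ the triangle inequality (together with the corresponding upper bound, which is proved by the same multiplication estimates) immediately yields both inequalities in \cref{eqn:DO-at-vf0-estimate}. The main obstacle is the perturbation estimate: one must carefully track how the rational dependence of $\tilde{\mathcal{O}}(\vf_0 + \vfd)$ on $\vf_0$ translates into continuity of $E_{\vf_0}$ as an operator $H^6_{FS} \to L^2$, and verify that its operator norm depends continuously on $\vf_0$ and vanishes at $\vf_0 = 0$. This is essentially standard but requires some bookkeeping of the orders of derivatives appearing in the coefficients of $E_{\vf_0}$.
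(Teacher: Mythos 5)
Your proposal follows essentially the same two-step strategy as the paper's proof: establish the two-sided bound at $\vf_0 = 0$ by combining \cref{lem:P1DQ-on-D'BE,lem:P1DQ-vs-sublaplacian-squared} with the action of $(Z_{\oneb})^2$ in spherical harmonics (the paper invokes orthogonality of the image of $D\mathcal{Q}|_{\mathfrak{D}_{BE}'}$ to $\ker(Z_{\oneb})^2$ where you invoke \cref{lem:Z1-squared-Zoneb-squared-in-spherical-harmonics}, which plays the same role), and then perturb using the general structural form of $\tilde{\mathcal{O}}$ from \cref{lemma:O-tilde-form} together with Folland--Stein multiplication/embedding estimates to absorb the error $E_{\vf_0}$ for $\vf_0$ small. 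This matches the paper's argument in both structure and ingredients.
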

\begin{proof} The upper bound in \cref{eqn:DO-at-vf0-estimate} follows from the fact that $D_{\vf_0} \mathcal{O}$ can be expressed as a $6$th order operator involving only $Z_1$ and $Z_{\oneb}$ derivatives (and depends continuously on $\vf_0$). In the case where $\vf_0 =0$ the lower bound follows immediately from \cref{lem:P1DQ-on-D'BE,lem:P1DQ-vs-sublaplacian-squared} and the fact that $D\mathcal{O} = (Z_{\oneb})^2D\mathcal{Q}$ (note that the image of $D\mathcal{Q}|_{\mathfrak{D}_{BE}'}$ is orthogonal to the kernel of $(Z_{\oneb})^2$). From the general form of the obstruction function (\cref{lemma:O-tilde-form}), halving $C$ if necessary, such an estimate will continue to hold so long as $\vf_0$ is sufficiently small in $H^6_{FS}$.
\end{proof}

Setting up for the proof of \cref{thm:partial-solvability} we let $\mathfrak{B}_1$ denote the closure of $\mathfrak{D}_{BE}'$ in the $H_{FS}^6$ norm, and $\mathfrak{B}_2$ the closure of $\mathfrak{Im}$ in the $L^2$ norm; $D\mathcal{O}$ then extends to an isomorphism from $\mathfrak{B}_1$ to $\mathfrak{B}_2$. By a slight abuse of notation we continue to write $\mathcal{P}_{\mathfrak{Im}}$ for the bounded extension of the projection $\mathcal{P}_{\mathfrak{Im}}$ introduced above (i.e. for the $L^2$ orthogonal projection onto $\mathfrak{B}_2$). Given $\vf_0 \in \mathfrak{D}_0^{\perp}$ we let $\mathcal{F}_{\vf_0}:\mathfrak{B}_1 \to \mathfrak{B}_2$ be given by
\beq
\mathcal{F}_{\vf_0}(\psi) = \mathcal{P}_{\mathfrak{Im}} \mathcal{O}(\psi+\vf_0).
\eeq
Then the linearization of $\mathcal{F}_{\vf_0}$ at $\psi =0$ is $\mathcal{L}_{\vf_0} = \mathcal{P}_{\mathfrak{Im}}  D_{\vf_0} \mathcal{O}|_{\mathfrak{B}_1} : \mathfrak{B}_1 \to \mathfrak{B}_2$, which is an isomorphism for $\vf_0$ sufficiently small by \cref{lem:linearized-obstruction-at-vf0-estimate}. Write
\beq
\mathcal{F}_{\vf_0}(\psi) = \mathcal{F}_{\vf_0}(0) +  \mathcal{L}_{\vf_0}(\psi) + \mathcal{N}_{\vf_0}(\psi).
\eeq
Setting $f = \mathcal{L}_{\vf_0}(\psi)$, the partial obstruction flatness equation $\mathcal{F}_{\vf_0}(\psi) =\mathcal{P}_{\mathfrak{Im}} \mathcal{O}(\psi+\vf_0) = 0$ can be written as
\beq
f = - \mathcal{F}_{\vf_0}(0) - \mathcal{N}_{\vf_0}(\mathcal{L}_{\vf_0}^{-1} f).
\eeq

In order to show that the map $f\mapsto - \mathcal{F}_{\vf_0}(0) - \mathcal{N}_{\vf_0}(\mathcal{L}_{\vf_0}^{-1} f)$ has a fixed point we will need the following lemma, which follows from \cref{lemma:O-tilde-form} and the fact that $\mathcal{N}_{\vf_0}(\psi)$ contains no constant or linear terms in $\psi$.
\begin{lemma} \label{lem:nonlinear-estimate-for-Kuranishi-argument}
There exist neighborhoods $U$ of $0 \in \mathfrak{D}_0^{\perp}$ and $V$ of $0 \in \mathfrak{D}_{BE}'$ in the $C^6$ topology and a constant $C>0$ such that 
\beq
\norm \mathcal{N}_{\vf_0}(\psi_1)-\mathcal{N}_{\vf_0}(\psi_2)\norm_{L^2} \leq C \left(\norm \psi_1\norm_6 + \norm \psi_2\norm_6\right)\norm \psi_1 - \psi_2\norm_6
\eeq
for all $\vf_0\in U$ and $\psi_1,\psi_2 \in V$.
\end{lemma}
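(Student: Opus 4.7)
The plan is to reduce the estimate to a uniform bilinear bound on the second Fr\'echet derivative $D^2\mathcal{F}_{\vf_0}$ and then to derive that bound from the structural \cref{lemma:O-tilde-form}. Since $\mathcal{N}_{\vf_0}$ is the Taylor remainder of $\mathcal{F}_{\vf_0}$ past first order at $\psi = 0$ (so that $\mathcal{N}_{\vf_0}(0) = 0$ and $D\mathcal{N}_{\vf_0}(0) = 0$), two applications of the fundamental theorem of calculus give
\beq
\mathcal{N}_{\vf_0}(\psi_1)-\mathcal{N}_{\vf_0}(\psi_2) = \int_0^1\!\!\int_0^1 D^2\mathcal{F}_{\vf_0}(s\eta_t)\bigl(\eta_t,\,\psi_1-\psi_2\bigr)\,ds\,dt,
\eeq
where $\eta_t = \psi_2 + t(\psi_1-\psi_2)$ satisfies $\norm \eta_t \norm_6 \leq \norm \psi_1 \norm_6 + \norm \psi_2 \norm_6$. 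The lemma will therefore follow as soon as we establish the uniform bilinear estimate
\beq
\norm D^2\mathcal{F}_{\vf_0}(\psi)(\alpha,\beta)\norm_{L^2}\leq C\norm\alpha\norm_6\norm\beta\norm_6
\eeq
for all $\vf_0, \psi$ in a sufficiently small $C^6$-neighborhood of $0$ (noting that $s\eta_t$ then also lies in such a neighborhood).

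To prove this bilinear estimate, I would absorb $\mathcal{P}_{\mathfrak{Im}}$ (which is bounded on $L^2$) into $C$ and invoke \cref{lemma:O-tilde-form}: writing $\vf = \vf_0+\psi$, the obstruction $\tilde{\mathcal{O}}(\vf_0+\psi)$ equals $(1-|\vf|^2)^{-9}$ times a polynomial in $\vf, \bar{\vf}$ and their $Z_1, Z_{\oneb}$ derivatives up to order $6$, with at most $6$ total derivatives per monomial. Differentiating twice in $\psi$ in directions $(\alpha,\beta)$ via the product and chain rules produces a finite sum of expressions of the form
\beq
\frac{R(\vf,\partial\vf,\ldots,\partial^6\vf)}{(1-|\vf|^2)^{N}}\,\partial^a\gamma_1\cdot \partial^b\gamma_2,
\eeq
with $\gamma_1,\gamma_2\in\{\alpha,\beta,\bar\alpha,\bar\beta\}$, $a+b\leq 6$, and $R$ a polynomial in $\vf, \bar\vf$ and their $Z$-derivatives up to order $6$ whose total derivative order on $\vf$ is at most $6$. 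Since $\vf_0, \psi$ are small in $C^6$ and $Z_1, Z_{\oneb}$ are smooth tangent fields on $S^3$, each factor $\partial^m\vf$ with $m\leq 6$ is bounded in $L^\infty$; hence $R/(1-|\vf|^2)^N$ is bounded in $L^\infty$ uniformly on the neighborhood.

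It then remains to bound $\norm \partial^a\gamma_1\cdot\partial^b\gamma_2\norm_{L^2}$ by $C\norm\gamma_1\norm_6\norm\gamma_2\norm_6$. Because $a+b\leq 6$, at least one of $a,b$ is $\leq 3$, say $a\leq 3$ after relabeling; by the Folland--Stein Sobolev embedding $H^{6-a}_{FS}(S^3)\hookrightarrow L^\infty$ (valid since $6-a\geq 3 > Q/2 = 2$), we get $\norm\partial^a\gamma_1\norm_{L^\infty}\leq C\norm\gamma_1\norm_6$, while $\norm\partial^b\gamma_2\norm_{L^2}\leq \norm\gamma_2\norm_6$ is immediate from $b\leq 6$. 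Summing the finitely many such terms yields the bilinear estimate. The main obstacle is purely bookkeeping: verifying that when one differentiates $\tilde{\mathcal{O}}(\vf_0+\psi)$ twice in $\psi$, the derivative budget of $6$ is never exceeded in the resulting bilinear products, so that the threshold $H^3_{FS}\hookrightarrow L^\infty$ can always be applied to at least one factor in each product.
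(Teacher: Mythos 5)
Your argument is correct and follows the route the paper intends: the paper's proof of this lemma is essentially just the assertion that it "follows from \cref{lemma:O-tilde-form} and the fact that $\mathcal{N}_{\vf_0}(\psi)$ contains no constant or linear terms in $\psi$," and your use of the second Fr\'echet derivative and the Taylor integral remainder, followed by the bilinear estimate via the Folland--Stein embedding $H^3_{FS}\hookrightarrow L^\infty$ (since $Q=4$), is exactly the natural fleshing-out of that claim. The only cosmetic point is that in $D^2\mathcal{F}_{\vf_0}(\psi)(\alpha,\beta)$ one should have $\gamma_1\in\{\alpha,\bar\alpha\}$ and $\gamma_2\in\{\beta,\bar\beta\}$ (one factor from each direction) rather than both drawn from all four, but this does not affect the estimate.
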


\begin{proof}[Proof of \cref{thm:partial-solvability}]

Take open neighborhoods $U$ of $0 \in \mathfrak{D}_0^{\perp}$ and $V$ of $0 \in \mathfrak{D}_{BE}'$ as in \cref{lem:nonlinear-estimate-for-Kuranishi-argument}. From \cref{lem:linearized-obstruction-at-vf0-estimate,lem:nonlinear-estimate-for-Kuranishi-argument} it follows that there is $C'>0$ such that
\beq
\norm \mathcal{N}_{\vf_0}(\mathcal{L}_{\vf_0}^{-1} f_1)-\mathcal{N}_{\vf_0}(\mathcal{L}_{\vf_0}^{-1} f_2)\norm_{L^2} \leq C' \left(\norm f_1\norm_{L^2} + \norm f_2\norm_{L^2}\right)\norm f_1 - f_2\norm_{L^2}
\eeq
for all $\vf_0\in U$ and all $f_1,f_2\in \mathfrak{B}_2$ sufficiently small. Hence if $\mathcal{F}_{\vf_0}(0)$ is sufficiently small (which can be ensured by taking $\vf_0$ sufficiently small) there exists $r>0$ such that 
\beq
T_{\vf_0}: f\mapsto - \mathcal{F}_{\vf_0}(0) - \mathcal{N}_{\vf_0}(\mathcal{L}_{\vf_0}^{-1} f)
\eeq
preserves the ball $B_{\mathfrak{B}_2}(0,r)$ of radius $r$ about $0$ in $\mathfrak{B}_2$ and is a contraction mapping on this ball. Shrinking $U$ if necessary, we may assume this is the case for all $\vf_0 \in U$. By the contraction mapping theorem it follows that for each $\vf_0\in U$ the map $T_{\vf_0}$ has a unique fixed point $f_{\vf_0}$ in $B_{\mathfrak{B}_2}(0,r)$, which corresponds to a unique solution $\psi = \mathcal{L}_{\vf_0}^{-1} f_{\vf_0}$ of the equation $\mathcal{F}_{\vf_0}(\psi) =\mathcal{P}_{\mathfrak{Im}} \mathcal{O}(\psi+\vf_0) = 0$ in $\mathcal{L}_{\vf_0}^{-1}\left(B_{\mathfrak{B}_2}(0,r)\right)$. We could just as well have obtained $\psi$ by applying the contraction mapping theorem to the map $\mathcal{L}_{\vf_0}^{-1}\circ T_{\vf_0} \circ\mathcal{L}_{\vf_0}$ and the uniform bound on the operators $\mathcal{L}_{\vf_0}$ given by \cref{lem:linearized-obstruction-at-vf0-estimate} therefore shows that (after shrinking $U$ further if necessary) for each $\vf_0\in U$ the solution $\psi = \mathcal{L}_{\vf_0}^{-1} f_{\vf_0}$ of the equation $\mathcal{F}_{\vf_0}(\psi) = 0$ is the unique solution to this equation in the ball $B_{\mathfrak{B}_1}(0,r') \subseteq \mathfrak{B}_1$ for some fixed $r'>0$ (independent of $\vf_0$). We therefore replace $V$ by $V\cap B_{\mathfrak{B}_1}(0,r')$. This establishes the result with the solutions $\psi = \mathcal{L}_{\vf_0}^{-1} f_{\vf_0}$ having $H^6_{FS}$ regularity. It remains to show that the solutions $\psi$ are in fact $C^{\infty}$ (given that we take $\vf_0$ to be $C^{\infty}$).

To see that $\psi = \mathcal{L}_{\vf_0}^{-1} f_{\vf_0}$ is $C^{\infty}$ for $\vf_0$ sufficiently small we rewrite the equation characterizing $\psi$ as
\beq\label{eqn:fixed-pt-eqn-for-psi}
\psi + \mathcal{L}_{\vf_0}^{-1}\mathcal{N}_{\vf_0} (\psi) = - \mathcal{L}_{\vf_0}^{-1} \mathcal{F}(0).
\eeq
Note that the right hand side of \cref{eqn:fixed-pt-eqn-for-psi} is $C^{\infty}$. Moreover, using the general form of the obstruction function as described in \cref{lemma:O-tilde-form}, the definition of $\mathcal{N}_{\vf_0}$, and the fact that $\mathcal{L}_{\vf_0}^{-1}$ gains $6$ derivatives in Folland-Stein spaces, the operator $\mathrm{Id} + \mathcal{L}_{\vf_0}^{-1}\mathcal{N}_{\vf_0}$ is seen to be an elliptic zeroth order Heisenberg pseudodifferential operator (for $\vf_0$ sufficiently small). The regularity of $\psi$ therefore follows from the regularity of $ \mathcal{L}_{\vf_0}^{-1} \mathcal{F}(0)$. This concludes the proof of the theorem.
\end{proof}

\begin{remark}\label{rem:Kuranishi}
This argument also provides us with a \emph{Kuranishi map} (cf., e.g., \cite{Viaclovsky2016})
\beq
\Psi : H^1_{\mathcal{O}} \to H^2_{\mathcal{O}}
\eeq
defined near $0$ by $\Psi(\vf_0) = \mathcal{O}(\psi+\vf_0)$ where $\psi$ is obtained from $\vf_0$ as in the proof of \cref{thm:partial-solvability}; the zero set of the Kuranishi map is locally isomorphic with the set of solutions of $\mathcal{O}\equiv 0$ in a neighborhood of $0$, the isomorphism being $\vf_0 \mapsto \psi+\vf_0$ with $\psi$ as constructed above. We shall see in \cref{sec:general-rigidity-result}, however, that $0$ is an isolated point of $\Psi^{-1}(0)$, i.e. there are no nontrivial solutions of the obstruction flatness equation near the standard CR structure.
\end{remark}

\section{Second Order Deformations and Formal Rigidity}\label{sec:formal-rigidity}

In this section we consider the formal solvability of the obstruction flatness equation beyond the linearized level, and show that the problem is already not solvable at the second order; this is \cref{thm:formal-nonsolvability} in the introduction. Its proof will follow immediately from \cref{thm:formal-nonsolvability-inequality}, which is proved below. Before we state and prove \cref{thm:formal-nonsolvability-inequality} we need the following technical lemma. In the following $\norm  \cdot  \norm_s$ denotes the $L^2$ Folland-Stein Sobolev $s$-norm on the Folland-Stein space $H^s_{FS}$ of functions with $s$ ($Z_1$ and $Z_{\oneb}$) derivatives in $L^2$.
\begin{lemma} \label{lem:3-norm}
Let $\theta$ denote the standard contact form on the standard CR sphere. There is a constant $C>0$ such that for any $\vfd \in \mathfrak{D}_0^{\perp}$,
\beq
C\norm \vfd \norm_3^2 \leq  \int_{S^3} D\mathcal{Q}(\vfd) \cdot i\overline{\nabla_0\vfd} \; \theta\wedge d\theta  \leq C^{-1} \norm \vfd \norm_3^2.
\eeq
\end{lemma}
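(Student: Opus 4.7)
The plan is to exploit the spherical harmonic decomposition and the fact that both operators appearing in the integrand act diagonally on the $H_{p,q}$ summands.

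First I would decompose $\vfd \in \mathfrak{D}_0^{\perp}$ as $\vfd = \sum_{p\geq 0}\bigl(\vfd_{p,0}+\vfd_{p,1}\bigr)$, and recall from \cref{lem:DQ-on-D0perp} that $D\mathcal{Q}$ acts on $H_{p,0}$ by multiplication by $(p+3)(p+4)$ and on $H_{p,1}$ by $\tfrac{1}{3}(p+3)(5p+8)$. By \cref{eqn:nabla0-on-varphi}, $\nabla_0$ acts on $H_{p,q}$ by $i(p-q+4)$, so $i\overline{\nabla_0 \vfd_{p,q}} = (p-q+4)\overline{\vfd_{p,q}}$ (the real scalar $(p-q+4)$ is positive for $q=0,1$). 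Since the $H_{p,q}$ are mutually $L^2$-orthogonal on $S^3$ with volume form $\theta\wedge d\theta$, the integral diagonalizes as
\begin{equation*}
\int_{S^3} D\mathcal{Q}(\vfd)\cdot i\overline{\nabla_0 \vfd}\,\theta\wedge d\theta = \sum_{p\geq 0}\Bigl(\alpha_p\norm\vfd_{p,0}\norm_{L^2}^2 + \beta_p\norm\vfd_{p,1}\norm_{L^2}^2\Bigr),
\end{equation*}
where $\alpha_p = (p+3)(p+4)^2$ and $\beta_p = \tfrac{1}{3}(p+3)^2(5p+8)$.

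Next I would compare these eigenvalues to the weights $(1+p+q+2pq)^3$ that define the equivalent Folland-Stein $3$-norm from \cref{rem:Foland-Stein-norms-via-spherical-harmonics}. On $H_{p,0}$ the weight is $(1+p)^3$ and the ratio $\alpha_p/(1+p)^3$ is a positive rational function of $p$ that is continuous on $[0,\infty)$ with finite positive limit $1$ at infinity and value $48$ at $p=0$; hence it is bounded above and below by positive constants. On $H_{p,1}$ the weight is $(2+3p)^3$ and the ratio $\beta_p/(2+3p)^3$ is likewise a positive rational function with positive finite limits at $0$ and $\infty$. Taking $C$ to be the minimum of the lower bounds (and $C^{-1}$ at least the maximum of the upper bounds) over both cases yields the desired two-sided estimate.

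The only real step is the elementary polynomial comparison in the second paragraph; there is no genuine analytic obstacle because both eigenvalue sequences are manifestly positive for $q \in \{0,1\}$ and of the same polynomial order as the Folland-Stein weights. The crucial point, which makes the lemma work, is that on $\mathfrak{D}_0^{\perp}$ the factor $(p-q+4)$ coming from $\nabla_0$ is strictly positive (it would vanish or change sign for $q \geq 4$), so that the integrand is in fact a positive definite quadratic form in the spherical harmonic coefficients of $\vfd$.
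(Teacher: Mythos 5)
Your proof is correct and follows exactly the route the paper intends: the paper's one-line proof cites \cref{lem:DQ-on-D0perp}, \cref{eqn:nabla0-on-varphi}, and \cref{rem:Foland-Stein-norms-via-spherical-harmonics}, and your argument is precisely the explicit spherical-harmonic computation those citations implicitly invoke. The diagonalization, the eigenvalue products $\alpha_p=(p+3)(p+4)^2$ and $\beta_p=\tfrac13(p+3)^2(5p+8)$, and the comparison against the weights $(1+p)^3$ and $(2+3p)^3$ are all right.
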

\begin{proof}
This follows immediately from \cref{lem:DQ-on-D0perp} and \cref{eqn:nabla0-on-varphi}, cf.\ \cref{rem:Foland-Stein-norms-via-spherical-harmonics}.
\end{proof}
We therefore have:
\begin{theorem}\label{thm:formal-nonsolvability-inequality}
There exists a constant $C>0$ such that for any smooth family $\vf(t)$ of deformations of the standard CR $3$-sphere with $\vf(0) = 0$ and $\left.\frac{d}{dt}\right|_{t=0} \vf(t) = \vfd \in \mathfrak{D}_0^{\perp} \setminus \{ 0 \}$ the corresponding family of obstruction functions $\mathcal{O}(t)$ (taken with respect to the standard contact form $\theta$ on $S^3$) satisfies 
\beq
\int_{S^3} \mathcal{O}(t) \,\theta\wedge d\theta \geq Ct^2 \norm \vfd \norm_3^2
\eeq
in a neighborhood of $t=0$. 
\end{theorem}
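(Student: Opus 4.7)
The plan is to Taylor-expand $\int_{S^3}\mathcal{O}(t)\,\theta\wedge d\theta$ about $t=0$, identify the leading $t^2$ coefficient with the positive quadratic form from \cref{lem:3-norm}, and control the higher-order remainder using smoothness of the family.

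First I would apply \cref{lem:obstruction-integral} to rewrite
\begin{equation*}
\int_{S^3}\mathcal{O}(t)\,\theta\wedge d\theta \;=\; -i\int_{S^3}\tilde{A}_{\oneb}{}^1\,\tilde{Q}_1{}^{\oneb}\,\theta\wedge d\theta.
\end{equation*}
Since $\vf(0)=0$, both factors vanish at $t=0$ and are of order $O(t)$. Combining \cref{eqn:A-tilde} with $\vf(t)_{\oneb}{}^1 = t\,\overline{\vfd} + O(t^2)$ and $(1-|\vf(t)|^2)^{-1}=1+O(t^2)$ yields $\tilde{A}_{\oneb}{}^1 = -t\,\overline{\nabla_0\vfd} + O(t^2)$, while $\tilde{Q}_1{}^{\oneb}(t) = t\,D\mathcal{Q}(\vfd) + O(t^2)$ by the definition of the linearization (with the higher-order dependence controlled by \cref{lemma:Q11-tilde-form}). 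Multiplying and integrating gives
\begin{equation*}
\int_{S^3}\mathcal{O}(t)\,\theta\wedge d\theta \;=\; t^2\int_{S^3} i\,\overline{\nabla_0\vfd}\,D\mathcal{Q}(\vfd)\,\theta\wedge d\theta \;+\; O(t^3),
\end{equation*}
and by \cref{lem:3-norm} the leading coefficient is bounded below by $2C\,\norm\vfd\norm_3^2$ for some $C>0$ independent of the family.

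For the remainder, smoothness of $t\mapsto \vf(t)$ combined with \cref{lemma:O-tilde-form} shows that $t\mapsto \int_{S^3}\mathcal{O}(t)\,\theta\wedge d\theta$ is a smooth function of $t$; hence on a small interval $(-t_0,t_0)$ the $O(t^3)$ error is bounded by $K(\vf)|t|^3$, where $K(\vf)$ depends on higher Taylor coefficients of the family. Shrinking $t_0$ so that $K(\vf)|t|\le C\,\norm\vfd\norm_3^2$ throughout $(-t_0,t_0)$ then produces the desired inequality $\int_{S^3}\mathcal{O}(t)\,\theta\wedge d\theta \ge C\,t^2\norm\vfd\norm_3^2$. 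The only genuine obstacle is the algebraic identification of the leading coefficient with the quadratic form in \cref{lem:3-norm}; this rests on $\nabla_0$ commuting with complex conjugation (so $\nabla_0\overline{\vfd}=\overline{\nabla_0\vfd}$) and on $D\mathcal{Q}$ acting by positive real scalars on each spherical harmonic component of $\vfd\in\mathfrak{D}_0^{\perp}$ (\cref{lem:DQ-on-D0perp}). Once these points are verified, positivity of the leading term and control of the remainder are routine.
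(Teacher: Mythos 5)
Your proof is correct and follows essentially the same route as the paper's: apply \cref{lem:obstruction-integral}, Taylor-expand to isolate the $t^2$ coefficient as $\int_{S^3} i\,\overline{\nabla_0\vfd}\,D\mathcal{Q}(\vfd)\,\theta\wedge d\theta$, bound this below via \cref{lem:3-norm}, and absorb the $O(t^3)$ remainder by shrinking the neighborhood of $t=0$ (which is allowed since only the constant $C$, not the neighborhood, must be uniform over families). The paper states the expansion and the appeal to \cref{lem:3-norm} more tersely, whereas you spell out the first-order Taylor coefficients of $\tilde{A}_{\oneb}{}^1$ and $\tilde{Q}_1{}^{\oneb}$ individually; the underlying argument is the same.
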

\begin{proof}
Let $\vf(t)$ be a family of deformation tensors with $\vf(0) = 0$ and $\left.\frac{d}{dt}\right|_{t=0} \vf(t) = \vfd \in \mathfrak{D}_0^{\perp} \setminus \{ 0 \}$. Then by \cref{lem:obstruction-integral},
\beq
\int_{S^3} \mathcal{O}(t) \, \theta\wedge d\theta = t^2\int_{S^3} (i\vfd_{\oneb}{}^1{}_{,0}\dot Q_1{}^{\oneb}) \, \theta\wedge d\theta + O(t^3),
\eeq
where $\dot Q_1{}^{\oneb} = D\mathcal{Q}(\vfd_{1}{}^{\oneb})$ and $\vfd_{\oneb}{}^1 = \overline{\vfd_{1}{}^{\oneb}} = \overline{\vfd}$. The result then follows immediately from \cref{lem:3-norm}.
\end{proof}
\begin{remark} Note that \cref{thm:formal-nonsolvability-inequality} is not claiming that the integral of the obstruction function becomes positive whenever we slightly deform the CR structure of the standard CR sphere, but only that it does so when the deformation starts out in an infinitesimally obstruction flat direction. 
\end{remark}

\section{General Rigidity Result}\label{sec:general-rigidity-result}
In this section we prove our main rigidity result, which implies \cref{thm:sphere-is-rigid}.
\begin{theorem}\label{thm:rigidity}
There is an open neighborhood $U$ of the origin in the slice $\mathfrak{D}'_{BE}\oplus \mathfrak{D}_0^{\perp}$ (in the $H^3_{FS}$ topology) such that the CR structure corresponding to a deformation tensor $\vf \in U$ is obstruction flat if and only if $\vf =0$.
\end{theorem}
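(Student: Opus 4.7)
The plan is to prove $\vf_0 = 0$ first and then deduce $\psi = 0$, where $\vf = \vf_0 + \psi$ is the unique decomposition in $\mathfrak{D}'_{BE} \oplus \mathfrak{D}_0^{\perp}$. Once $\vf_0 = 0$ is established, the assumption $\mathcal{O}(\vf) \equiv 0$ implies $\mathcal{O}(\psi) \equiv 0 \mod H^2_{\mathcal{O}}$; since $\psi = 0$ is visibly a partial solution for $\vf_0 = 0$, the uniqueness clause of \cref{thm:partial-solvability} forces $\psi = 0$.

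To obtain $\vf_0 = 0$, I exploit the scalar obstruction furnished by integrating over $S^3$. Since $\mathcal{O}(\vf) \equiv 0$, \cref{lem:obstruction-integral} gives
\[
0 = i\int_{S^3} \frac{\vf_{\oneb}{}^1{}_{,0}\,\tilde{Q}_1{}^{\oneb}(\vf)}{1-|\vf|^2}\,\theta\wedge d\theta.
\]
I will prove a lower bound for the right-hand side of the form $C\|\vf_0\|_3^2 - o(\|\vf_0\|_3^2)$, which contradicts its vanishing unless $\vf_0 = 0$. Expanding $\tilde{Q}_1{}^{\oneb}(\vf) = D\mathcal{Q}(\vf_0) + D\mathcal{Q}(\psi) + N_Q(\vf)$, with $N_Q$ collecting everything of order $\geq 2$ in $\vf$, and $(1-|\vf|^2)^{-1} = 1 + O(|\vf|^2)$, the leading quadratic contribution is exactly
\[
i\int_{S^3} (\nabla_0\bar\vf_0)\,D\mathcal{Q}(\vf_0)\,\theta\wedge d\theta,
\]
which by \cref{lem:3-norm} is bounded below by $C\|\vf_0\|_3^2$ with $C > 0$.

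The two mixed bilinear contributions $i\int(\nabla_0\bar\vf_0)\,D\mathcal{Q}(\psi)\,\theta\wedge d\theta$ and $i\int(\nabla_0\bar\psi)\,D\mathcal{Q}(\vf_0)\,\theta\wedge d\theta$ vanish identically. Indeed, $\nabla_0$ preserves each spherical harmonic space $H_{p,q}$, and from \cref{lem:DQ-on-D0perp} and the surrounding discussion in \cref{sec:linear-theory} one has $D\mathcal{Q}(\mathfrak{D}_0^{\perp}) \subset \mathfrak{D}_0^{\perp}$ and $D\mathcal{Q}(\mathfrak{D}'_{BE}) \subset \mathfrak{D}_0$. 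Since the bilinear pairing $\int fg\,\theta\wedge d\theta$ is nonzero only when $f$ and $\bar g$ share a spherical harmonic component, in the first cross term the factor $\nabla_0\bar\vf_0$ is supported on $H_{p,q}$ with $p \in \{0,1\}$ while the conjugate of $D\mathcal{Q}(\psi) \in \mathfrak{D}_0$ has first index $\geq 2$ — forcing the integral to vanish — and analogously for the second cross term. The surviving contributions split into a pure-$\psi$ bilinear piece, bounded by Cauchy–Schwarz and \cref{lem:P1DQ-vs-sublaplacian-squared} in terms of $\|\psi\|$, and super-quadratic terms coming from $N_Q$ and from $(1-|\vf|^2)^{-1}$. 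Using the structural form provided by \cref{lemma:Q11-tilde-form,lemma:O-tilde-form}, integration by parts, and the quadratic smallness $\|\psi\| \lesssim \|\vf_0\|^2$ (a consequence of \cref{thm:partial-solvability} together with the fact that $D\mathcal{O}$ vanishes on $\mathfrak{D}_0^{\perp} = H^1_{\mathcal{O}}$), each such term is of order $o(\|\vf_0\|_3^2)$ once $\vf$ is sufficiently small.

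The main technical obstacle is precisely the last step: the natural $H^3_{FS}$ norm matching \cref{lem:3-norm} must be reconciled with the $4$- and $6$-derivative structure of $\tilde{Q}$ and $\mathcal{O}$ that appears in the error terms. This calls for a careful bookkeeping in which derivatives are distributed via integration by parts so that each error contribution factors as a small Folland–Stein norm of $\vf$ times $\|\vf_0\|_3^2$. Once this estimate is in place, the absorption argument yields $\int_{S^3}\mathcal{O}(\vf)\,\theta\wedge d\theta \geq \tfrac{C}{2}\|\vf_0\|_3^2$ for $\vf$ in some $H^3_{FS}$-neighborhood of $0$, which combined with the vanishing of the integral forces $\vf_0 = 0$, and hence, via the uniqueness of \cref{thm:partial-solvability}, $\vf = 0$.
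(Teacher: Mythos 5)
Your overall strategy is close to the paper's: both arguments hinge on \cref{lem:obstruction-integral}, the splitting $\vf = \vf_0 + \psi$ with $\vf_0\in\mathfrak{D}_0^\perp$, $\psi\in\mathfrak{D}'_{BE}$, the observation that the cross terms $\int\nabla_0\bar\vf_0\,D\mathcal{Q}(\psi)$ and $\int\nabla_0\bar\psi\,D\mathcal{Q}(\vf_0)$ vanish by spherical-harmonic orthogonality, and the coercivity \cref{lem:3-norm} for the $\vf_0$--$\vf_0$ block. The difference, and the gap, is in how you control $\psi$.

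The crux is that you invoke \cref{thm:partial-solvability} twice --- once to claim $\|\psi\|\lesssim\|\vf_0\|^2$, and once more at the end to conclude $\psi=0$ from $\vf_0=0$. But \cref{thm:partial-solvability} lives in the $H^6_{FS}$ topology: the existence and (crucially) the uniqueness of $\psi$ there are asserted only for $\psi$ in a small $H^6_{FS}$-ball. \cref{thm:rigidity}, however, is a statement about an $H^3_{FS}$-neighborhood. Given $\vf$ small in $H^3_{FS}$, you have no control on $\|\psi\|_{H^6_{FS}}$, so you cannot identify your $\psi$ with the Kuranishi solution from \cref{thm:partial-solvability}, cannot deduce $\|\psi\|\lesssim\|\vf_0\|^2$, and cannot run the final uniqueness step. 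As written, your argument proves a version of the theorem in the $H^6_{FS}$ topology, which is weaker than what is claimed.

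The paper avoids this by never invoking partial solvability in the rigidity proof. Instead it extracts the smallness of the $\mathfrak{D}'_{BE}$-component directly from the hypothesis $\mathcal{O}(\vf)=0$: writing $\vf^{(k)}=\epsilon_k\hat\vf^{(k)}$ with $\|\hat\vf^{(k)}\|_3=1$, one isolates the linear part $\epsilon_k(Z_{\oneb})^2 D\mathcal{Q}(\hat\vf^{(k)}_{\mathfrak{D}'_{BE}})$ in the obstruction flatness equation, applies $(1+\Delta_b)^{-1}(Z_1)^2$ to bring everything back to third order, and pairs against $\overline{\hat\vf^{(k)}_{\mathfrak{D}'_{BE}}}$. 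Using \cref{lem:P1DQ-on-D'BE,lem:P1DQ-vs-sublaplacian-squared,lem:Z1-squared-Zoneb-squared-in-spherical-harmonics} for coercivity and \cref{lemma:Z1Z1O-tilde-form} plus the Folland--Stein Sobolev inequalities for the nonlinearity, this yields the \emph{a priori} bound $\bnorm\hat\vf^{(k)}_{\mathfrak{D}'_{BE}}\bnorm_3 = O(\epsilon_k^{1/2})$ using only $H^3_{FS}$ data. That estimate (weaker than your claimed $\|\psi\|\lesssim\|\vf_0\|^2$, but all that is needed) is what feeds into the absorption step, and it is precisely the ingredient missing from your proposal. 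To repair your argument at the $H^3_{FS}$ level you would need to replace the appeal to \cref{thm:partial-solvability} by some such direct a priori estimate derived from the equation itself.
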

\begin{proof}
With a view to obtaining a contradiction we suppose that there exists a sequence $\vf^{(k)}$ of nonzero obstruction flat deformation tensors such that $\vf^{(k)}\to 0$ in $H^3_{FS}$. Set $\vf^{(k)} = \epsilon_k \hat{\vf}^{(k)}$, where $\norm\hat{\vf}^{(k)}\norm_3=1$. Then $\epsilon_k \to 0$. We let $\mathcal{P}_1: \mathfrak{D}'_{BE}\oplus \mathfrak{D}_0^{\perp} \to \mathfrak{D}'_{BE}$ and $\mathcal{P}_2: \mathfrak{D}'_{BE}\oplus \mathfrak{D}_0^{\perp} \to \mathfrak{D}_0^{\perp}$ denote the $L^2$ orthogonal projections, and write 
\beq
\hat{\vf}^{(k)} = \hat{\vf}^{(k)}_{\mathfrak{D}'_{BE}} + \hat{\vf}^{(k)}_{\mathfrak{D}_0^{\perp}}
\eeq
where $\hat{\vf}^{(k)}_{\mathfrak{D}'_{BE}} = \mathcal{P}_1 \hat{\vf}^{(k)}$ and $\hat{\vf}^{(k)}_{\mathfrak{D}_0^{\perp}} = \mathcal{P}_2 \hat{\vf}^{(k)}$. Our first goal is to show that $\bnorm \hat{\vf}^{(k)}_{\mathfrak{D}'_{BE}}  \bnorm_3 \to 0$ (and hence $\bnorm \hat{\vf}^{(k)}_{\mathfrak{D}_0^{\perp}}  \bnorm_3 \to 1$) as $k\to \infty$, meaning that $\vf^{(k)}$ is approximately a solution of the linearized equation for large $k$.
%Recall that the linear part of the obstruction flatness equation only involves $\hat{\vf}^{(k)}_{\mathfrak{D}'_{BE}}$ ($\hat{\vf}^{(k)}_{\mathfrak{D}_0^{\perp}}$ solves the linearized equation) and that the linearized operator is injective when restricted to $\mathfrak{D}'_{BE}$ with . Since we have only assumed, however, that $\norm \vf^{(k)}\norm_3 \to 0$ and not that, say, $\norm \vf^{(k)}\norm_6 \to 0$, the problem of controlling the nonlinear terms turns out to be delicate (but manageable).

As above we fix $\theta$, the standard contact form on the $3$-sphere, and for each $\vf^{(k)}$ we compute with respect to the frame given by \cref{eqn:tilde-Z1} and the corresponding admissible coframe. We let $Q_{11}^{(k)}$ denote the Cartan umbilical tensor of the CR structure with deformation tensor $\vf^{(k)}$, $\nabla_{(k)}$ the Tanaka-Webster connection of the corresponding pseudohermitian structure, and $A^{11}_{(k)}$ its pseudohermitian torsion. Since each $\vf^{(k)}$ is assumed to be obstruction flat, we have
\beq \label{eqn:obsruction-flat-k}
\nabla^1_{(k)}\!\nabla^1_{(k)} Q_{11}^{(k)} = iA^{11}_{(k)} Q_{11}^{(k)}
\eeq
for each $k$. Note that the part of $\nabla^1_{(k)}\!\nabla^1_{(k)} Q_{11}^{(k)}$ that is linear in $\vf^{(k)}$ is $D\mathcal{O}(\vf^{(k)}) = (Z_{\oneb})^2 D\mathcal{Q}( \vf^{(k)})$, which equals $\epsilon_k (Z_{\oneb})^2 D\mathcal{Q}\left( \hat{\vf}^{(k)}_{\mathfrak{D}'_{BE}}\right)$ since $\hat{\vf}^{(k)}_{\mathfrak{D}_0^{\perp}}$ solves the linearized equation. Moving all nonlinear terms in \cref{eqn:obsruction-flat-k} to the right hand side we may rewrite the equation as
\beq \label{eqn:obsruction-flat-k-linear-vs-nonlinear}
\epsilon_k (Z_{\oneb})^2 D\mathcal{Q}\left( \hat{\vf}^{(k)}_{\mathfrak{D}'_{BE}}\right) = \mathcal{F}(\vf^{(k)}),
\eeq
where $\mathcal{F}(\vf)$ is given by $(1-|\vf|^2)^{-9}$ times a polynomial in $\vf$, $\overline{\vf}$ and their $Z_1$ and $Z_{\oneb}$ derivatives up to order $6$ having no linear term, cf.\  \cref{lemma:O-tilde-form}. One might expect that $\mathcal{F}(\vf^{(k)})$ is $O(\epsilon_k^2)$, from which it would follow that $\bnorm\hat{\vf}^{(k)}_{\mathfrak{D}'_{BE}}\bnorm_6 \to 0$. The problem with this is that we only assumed that $\norm \vf^{(k)} \norm_3 \to 0$ while $\mathcal{F}(\vf^{(k)})$ involves derivatives up to order $6$. Since the idea is to show that the integral of the obstruction function must be nonzero for large $k$, it is natural to assume $\norm \vf^{(k)} \norm_3 \to 0$ rather than $\norm \vf^{(k)} \norm_6 \to 0$; cf.\ \cref{lem:obstruction-integral} and \cref{lem:3-norm} which will be used below. 

By \cref{lem:P1DQ-on-D'BE,lem:P1DQ-vs-sublaplacian-squared,lem:Z1-squared-Zoneb-squared-in-spherical-harmonics} (cf.\ \cref{rem:Foland-Stein-norms-via-spherical-harmonics}) there exists a constant $C>0$ such that for $\vf \in \mathfrak{D}'_{BE}$,
\beq
C\bnorm \vf \bnorm_3^2 \leq  \int_{S^3} \overline{\vf}\cdot (1+\Delta_b)^{-1}(Z_1)^2(Z_{\oneb})^2 D\mathcal{Q}\left( \vf \right)\; \theta \wedge d\theta \leq C^{-1} \bnorm \vf \bnorm_3^2.
\eeq
Applying $(1+\Delta_b)^{-1}(Z_1)^2$ to both sides of \cref{eqn:obsruction-flat-k-linear-vs-nonlinear} and then integrating the resulting expression against the conjugate of $\hat{\vf}^{(k)}_{\mathfrak{D}'_{BE}}$ one therefore obtains that
\beq\label{eqn:bound-on-norm-of-phi-D'BE-from-equation}
\epsilon_k C\bnorm \hat{\vf}^{(k)}_{\mathfrak{D}'_{BE}} \bnorm_3^2 \leq \int_{S^3} \overline{\hat{\vf}^{(k)}_{\mathfrak{D}'_{BE}}} \cdot (1+\Delta_b)^{-1}(Z_1)^2\mathcal{F}(\vf^{(k)})\; \theta\wedge d \theta .
\eeq
Let $W_{FS}^{s,p}$ denote the Folland-Stein space of functions with $s$ derivatives in $L^p$ (where the derivatives are $Z_1$ and $Z_{\oneb}$ derivatives). By using integration by parts to balance the numbers of derivatives on the factors, applying the generalized H\"older inequality, and then using the Sobolev embedding theorem for Folland-Stein spaces \cite{FollandStein1974,Folland1975} (in particular that, since the homogeneous dimension of the CR $3$-sphere is 4, the $H^3_{FS}$ norm controls the $W_{FS}^{2,4}$ norm, the $W_{FS}^{1,q}$ norm for $2\leq q < \infty$, and the $L^{\infty}$ norm) using \cref{lemma:Z1Z1O-tilde-form} one can show that
\beq\label{eqn:nonlinear-terms-controlled}
\int_{S^3} \overline{\hat{\vf}^{(k)}_{\mathfrak{D}'_{BE}}} \cdot (1+\Delta_b)^{-1}(Z_1)^2\mathcal{F}(\vf^{(k)})\; \theta\wedge d \theta  = O(\epsilon_k^2)
\eeq
as $k\to \infty$ (since $\mathcal{F}(\vf)$ has no linear terms).
We illustrate this by showing how to estimate the term in the left hand side of \cref{eqn:nonlinear-terms-controlled} that arises from the term $\frac{1}{6}(1-|\vf|^2)^{-9}\vf_{\oneb}{}^1\vf_1{}^{\oneb}{}_{,}{}^{1\oneb}{}_{11\oneb}{}^{1}$ in $\mathcal{F}(\vf)$: To show that
\beq\label{eqn:example-nonlinear-terms-O-epsilon-squared}
 \int_{S^3} \overline{\hat{\vf}^{(k)}_{\mathfrak{D}'_{BE}}} \cdot (1+\Delta_b)^{-1}(Z_1)^2\left((1-|\vf^{(k)}|^2)^{-9}\vf^{(k)}{}_{\oneb}{}^1\vf^{(k)}{}_1{}^{\oneb}{}_{,}{}^{1\oneb}{}_{11\oneb}{}^{1} \right)\; \theta\wedge d \theta  = O(\epsilon_k^2)
\eeq
we first write $\vf^{(k)}{}_{\oneb}{}^1\,\vf^{(k)}{}_1{}^{\oneb}{}_{,}{}^{1\oneb}{}_{11\oneb}{}^{1}=\epsilon_k^2 \,\hat\vf^{(k)}{}_{\oneb}{}^1\,\hat\vf^{(k)}{}_1{}^{\oneb}{}_{,}{}^{1\oneb}{}_{11\oneb}{}^{1}$ and integrate $(1+\Delta_b)^{-1}(Z_1)^2$ by parts so that the left hand side of \cref{eqn:example-nonlinear-terms-O-epsilon-squared} becomes $\epsilon_k^2$ times
\beq\label{eqn:example-nonlinear-terms}
 \int_{S^3} \left((Z_1)^2(1+\Delta_b)^{-1}\overline{\hat{\vf}^{(k)}_{\mathfrak{D}'_{BE}}}\,\right) \cdot (1-|\vf^{(k)}|^2)^{-9}\hat\vf^{(k)}{}_{\oneb}{}^1\hat\vf^{(k)}{}_1{}^{\oneb}{}_{,}{}^{1\oneb}{}_{11\oneb}{}^{1} \; \theta\wedge d \theta.
\eeq
We then show that \cref{eqn:example-nonlinear-terms} is bounded. To see this we first integrate by parts to remove three of the derivatives from the $\hat\vf_1{}^{\oneb}{}_{,}{}^{1\oneb}{}_{11\oneb}{}^{1}$ factor. In this way, by the Leibniz rule, we obtain the integral of $(1-|\vf|^2)^{-9}$ times a polynomial in $\hat\vf^{(k)}$, $\overline{\hat\vf^{(k)}}$, $(Z_1)^2(1+\Delta_b)^{-1}\overline{\hat{\vf}^{(k)}_{\mathfrak{D}'_{BE}}}$ and their $Z_1$ and $Z_{\oneb}$ derivatives up to order at most $3$ (note that the Folland-Stein $3$-norm of $(Z_1)^2(1+\Delta_b)^{-1}\overline{\hat{\vf}^{(k)}_{\mathfrak{D}'_{BE}}}$ is uniformly bounded since $(1+\Delta_b)^{-1}$ gains two derivatives in Folland-Stein spaces), where the total number of $Z_1$ and $Z_{\oneb}$ derivatives on the factors in any term of the polynomial is $6$. Since the Folland-Stein $3$-norm controls the $L^{\infty}$ norm we need only concern ourselves with the factors in each term of the polynomial which have at least one $Z_1$ or $Z_{\oneb}$ derivative. Since the total number of derivatives in each term is $6$ and one term has $3$ derivatives, the (nonzero) numbers of derivatives must be $3+3$, $3+2+1$ or $3+1+1+1$; in each case, by the generalized H\"older inequality (with the partitions $1=\frac{1}{2}+\frac{1}{2}$, $1 = \frac{1}{2}+\frac{1}{4}+\frac{1}{4}$, $1 = \frac{1}{2}+\frac{1}{6}+\frac{1}{6}+\frac{1}{6}$ for the three respective cases) we can estimate the term arising in the integral in terms of the $H_{FS}^3$ norm, the $W_{FS}^{2,4}$ norm, the $W_{FS}^{1,6}$ norm, and the $L^{\infty}$ norm of $\vf^{(k)}$. As commented above, these norms are all uniformly bounded since the $H_{FS}^3$ norm is uniformly bounded. This shows that \cref{eqn:example-nonlinear-terms} is uniformly bounded in $k$, giving \cref{eqn:example-nonlinear-terms-O-epsilon-squared}. Arguing similarly for the other terms in $\mathcal{F}(\vf)$, of which there are finitely many, one obtains \cref{eqn:nonlinear-terms-controlled}. Hence, by \cref{eqn:bound-on-norm-of-phi-D'BE-from-equation} we have
\beq\label{eqn:key-estimate}
\bnorm \hat{\vf}^{(k)}_{\mathfrak{D}'_{BE}} \bnorm_3 = O(\epsilon_k^{1/2}).
\eeq
It follows that $\bnorm \hat{\vf}^{(k)}_{\mathfrak{D}_0^{\perp}}  \bnorm_3 \to 1$. 

%Integrating the expression \cref{eqn:6th-order-on-vf-D'BE} against the conjugate of $\hat{\vf}^{(k)}_{\mathfrak{D}'_{BE}}$ (over $S^3$, with respect to $\theta \wedge d\theta$) and computing in terms of spherical harmonics, by  \cref{lem:P1DQ-vs-subplaplacian-squared,lem:Z1-squared-Zoneb-squared-in-spherical-harmonics} (cf.\ \cref{rem:Foland-Stein-norms-via-spherical-harmonics}) one obtains an expression that is equivalent to $\bnorm \hat{\vf}^{(k)}_{\mathfrak{D}'_{BE}} \bnorm_3^2$ (i.e. the ratio of these quantities is bounded above and below by positive constants that are independent of $k$). Using \cref{eqn:A-tilde} and \cref{lemma:Q11-tilde-form} (or simply \cref{lemma:Z1Z1O-tilde-form}) to guarantee control of the nonlinear terms, by applying $(1+\Delta_b)^{-1}(Z_1)^2$ to both sides of \cref{eqn:obsruction-flat-k} and integrating against the conjugate of $\hat{\vf}^{(k)}_{\mathfrak{D}'_{BE}}$ one therefore obtains that $\epsilon_k \bnorm \hat{\vf}^{(k)}_{\mathfrak{D}'_{BE}} \bnorm_3^2 + O(\epsilon_k^2) = O(\epsilon_k^2)$, i.e. that \edz{Here one also needs H\"older's inequality (iterated) and the Sobolev inequalities for Folland-Stein spaces. This needs to be added!}
%\beq\label{eqn:key-estimate}
%\bnorm \hat{\vf}^{(k)}_{\mathfrak{D}'_{BE}} \bnorm_3 = O(\epsilon_k^{1/2}).
%\eeq
%It follows that $\bnorm \hat{\vf}^{(k)}_{\mathfrak{D}_0^{\perp}}  \bnorm_3 \to 1$. 

Now since we have assumed $\mathcal{O}^{(k)} = \nabla^1_{(k)}\!\nabla^1_{(k)} Q_{11}^{(k)} - iA^{11}_{(k)} Q_{11}^{(k)} =0$ for each $k$, by \cref{lem:obstruction-integral} followed by \cref{lemma:Q11-tilde-form} and \cref{eqn:key-estimate} we have
\begin{align}
0= \int_{S^3} \mathcal{O}^{(k)} \, \theta\wedge d\theta &= i\int_{S^3} \dfrac{\vf^{(k)}{}_{\oneb}{}^1{}_{,0}Q^{(k)}{}_1{}^{\oneb} }{1-|\vf^{(k)}|^2} \, \theta\wedge d\theta \\
\nonumber &= \epsilon_k^2\int_{S^3} D\mathcal{Q}(\hat\vf^{(k)}_{\mathfrak{D}_0^{\perp}}) \cdot i\overline{\nabla_0\hat\vf^{(k)}_{\mathfrak{D}_0^{\perp}}} \;\theta\wedge d\theta + O(\epsilon_k^{5/2}).
\end{align}
It follows that 
\beq \label{eqn:contradiction-a}
\lim_{k\to \infty} \int_{S^3} D\mathcal{Q}(\hat\vf^{(k)}_{\mathfrak{D}_0^{\perp}}) \cdot i\overline{\nabla_0\hat\vf^{(k)}_{\mathfrak{D}_0^{\perp}}} \;\theta\wedge d\theta = \lim_{k\to\infty} \frac{1}{\epsilon_k^2} \int_{S^3} \mathcal{O}^{(k)} \, \theta\wedge d\theta = 0.
\eeq 
On the other hand \cref{lem:3-norm} and the fact that $\bnorm \hat{\vf}^{(k)}_{\mathfrak{D}_0^{\perp}}  \bnorm_3 \to 1$ imply that 
\beq
\lim_{k\to \infty} \int_{S^3} D\mathcal{Q}(\hat\vf^{(k)}_{\mathfrak{D}_0^{\perp}}) \cdot i\overline{\nabla_0\hat\vf^{(k)}_{\mathfrak{D}_0^{\perp}}} \;\theta\wedge d\theta \geq \lim_{k\to \infty}C\bnorm \hat{\vf}^{(k)}_{\mathfrak{D}_0^{\perp}} \bnorm_3 = C > 0,
\eeq
contradicting \cref{eqn:contradiction-a}. We conclude that there is no such sequence $\vf^{(k)}$ of nonzero obstruction flat deformation tensors.
\end{proof}

\bibliographystyle{plain}
%\bibliography{bib}

\newcommand{\noopsort}[1]{}

\end{document}